\numberwithin{equation}{section} \theoremstyle{plain}
\newtheorem{theorem}{Theorem}[section]
\newtheorem{lemma}[theorem]{Lemma}
\newtheorem{corollary}[theorem]{Corollary}
\newtheorem{definition}[theorem]{Definition}
\theoremstyle{definition}
\theoremstyle{remark}
\numberwithin{equation}{section}
\newcommand{\Det}{\operatorname{Det}}
\newcommand{\Span}{\operatorname{Span}}
\newcommand{\Fp}{\operatorname{Fp}}
\newcommand{\Dim}{\operatorname{dim}}
\newcommand{\DN}{\operatorname{DN}}
\newcommand{\Neu}{\operatorname{Neu}}
\newcommand{\Ker}{\operatorname{ker}}
\newcommand{\Spec}{\operatorname{Spec}}
\newcommand{\Tr}{\operatorname{Tr}}
\newcommand{\Res}{\operatorname{Res}}
\newcommand{\Dom}{\operatorname{Dom}}
\newcommand{\ddet}{\operatorname{det}}
\newcommand{\Id}{\operatorname{Id}}
\newcommand{\Mod}{\operatorname{Mod}}
\newcommand{\vol}{\operatorname{vol}}
\begin{document}

\title[The BFK-gluing formula for Robin boundary condition]
{The BFK type gluing formula of zeta-determinants for the Robin Boundary condition}
\author{Klaus Kirsten}
\address{GCAP-CASPER, Department of Mathematics, Baylor University, Waco, TX 76796, USA and Mathematical Reviews, American Mathematical Society,
 416 4$th$ Street, Ann Arbor, MI 48103, USA}

\email{Klaus\_Kirsten@Baylor.edu}

\author{Yoonweon Lee}

\address{Department of Mathematics Education, Inha University, Incheon, 22212, Korea and
School of Mathematics, Korea Institute for Advanced Study, 85 Hoegiro, Dongdaemun-gu, Seoul, 02455, Korea}

\email{yoonweon@inha.ac.kr}

\subjclass[2000]{Primary: 58J20; Secondary: 14F40}
\keywords{BFK-gluing formula of the zeta-determinants, Dirichlet-to-Neumann operator, Robin boundary condition, Dirichlet and Neumann boundary conditions}
\thanks{The first author was supported by the Baylor University Summer Sabbatical Programme. The second author was supported by the National Research Foundation of Korea with the Grant number 2016R1D1A1B01008091}

\begin{abstract}
In this paper we discuss the BFK type gluing formula for zeta-determinants of Laplacians with respect to the Robin boundary condition on a compact Riemannian manifold. As a special case, we discuss the gluing formula with respect to the Neumann boundary condition.
We also compute the difference of two zeta-determinants with respect to the Robin and Dirichlet boundary conditions.
We use this result to compute the zeta-determinant of a Laplacian on a cylinder when the Robin boundary condition is imposed, which extends a result in \cite{MKB}.
We also discuss the gluing formula more precisely when the product structure is given near a cutting hypersurface.
\end{abstract}

\maketitle

\section{Introduction}

The zeta-determinant of a Laplacian on a compact oriented Riemannian manifold with or without boundary is a global spectral invariant, which was introduced by Ray and Singer
in \cite{RS} to define the analytic torsion as an analytic counterpart of the Reidemeister torsion.
Since then, it has been one of important spectral invariants which plays a central role in geometry, topology and mathematical physics.
However, it is very hard to compute explicitly except only in a few cases.
Under these circumstances it is helpful in many ways to consider the gluing formula of the zeta-determinants.
The gluing formula for zeta-determinants of Laplacians was proved by Burghelea, Friedlander and Kappeler in \cite{BFK1} on a compact Riemannian manifold by using
the Dirichlet boundary condition (see also \cite{Ca} and \cite{Fo}), which we call the BFK-gluing formula.
In this case the Dirichlet-to-Neumann operator defined on a cutting hypersurface, which is a classical elliptic pseudodifferential operator  ($\Psi$DO) of order $1$, plays a central role.
By using complementary boundary condition (see Definition \ref{Definition:2.5} below), this result can be extended to the case of other boundary conditions, which is indicated in \cite{BFK1} and \cite{Fo}.

In this paper, we follow the arguments and methods in \cite{BFK1} to prove the gluing formula for zeta-determinants with respect to the Robin boundary condition,
which includes the Neumann boundary condition as a special case. We also discuss the difference of two zeta-determinants of Laplacians
with respect to the Robin and Dirichlet boundary conditions on a compact Riemannian manifold with boundary.
Comparing the gluing formula of the Robin boundary condition with the case of the Dirichlet boundary condition, there are two main differences.
First, in the Robin boundary condition the operator corresponding to the Dirichlet-to-Neumann operator is
a classical elliptic $\Psi$DO of order $-1$ so that it is a bounded operator. We use the inverse of this operator, which is a $\Psi$DO of order $1$, to define the zeta function and zeta-determinant.
Second, to prove the BFK-gluing formula of the zeta-determinant of a Laplace operator $\Delta$ in \cite{BFK1}, Burghelea et al. used a one parameter family $\Delta + \lambda$
for $\lambda \in {\mathbb C} - (-\infty, 0]$ rather than $\Delta$ itself, and then they used the derivative with respect to $\lambda$ with various functional analysis methods.
Under the Dirichlet boundary condition the Laplacian is a non-negative operator, and hence it is enough to take $\lambda$ to be non-negative real numbers.
However, under the Robin boundary condition the Laplacian may have finitely many negative eigenvalues.
In this case, we should take $\lambda = re^{i \theta}$ with $0 < \theta < \frac{\pi}{2}$ so that $\Delta + \lambda$ is a one parameter family of invertible operators for $r > 0$.

When a manifold has a product structure near a cutting hypersurface, the BFK-gluing formula is described more precisely.
Analogously we discuss the gluing formula for the Robin boundary condition and the difference of two zeta-determinants with respect to the Robin and Dirichlet boundary conditions  when a manifold has a product structure near a cutting hypersurface.
As an application, we compute the zeta-determinant of a Laplacian on a cylinder of length $L$ when the Robin boundary condition is imposed on  the boundary. This recovers a result in \cite{MKB}, which computes the zeta-determinant on a line segment when the Robin boundary condition is imposed.

\vspace{0.3 cm}

\section{The BFK-type gluing formula for the Robin boundary condition}

\vspace{0.2 cm}

Let $(M, g)$ be an $m$-dimensional compact oriented Riemannian manifold with boundary $\partial M$, where $\partial M$ is possibly empty.
We denote by ${\mathcal N}$ a closed hypersurface of $M$ such that
${\mathcal N} \cap \partial M = \emptyset$.
We also denote by $M_{0}$ and $g_{0}$ the closure of $M - {\mathcal N}$ and the induced metric so that $(M_{0}, g_{0})$ is a compact Riemannian manifolds with boundary
${\mathcal N}_{1} \cup {\mathcal N}_{2} \cup \partial M$, where ${\mathcal N}_{1} = {\mathcal N}_{2} = {\mathcal N}$.
We choose a unit normal vector field $\partial_{{\mathcal N}}$ along ${\mathcal N}$, which points outward to ${\mathcal N}_{1}$ and
inward to ${\mathcal N}_{2}$.
Let $\pi : {\mathcal E} \rightarrow M$ be a Hermitian vector bundle of rank $r_{0}$ and let ${\mathcal E}_{0}$ be the extension of ${\mathcal E}$ to $M_{0}$.
We denote by $(~ , ~ )$ the Hermitian inner product on ${\mathcal E}$. This together with the Riemannian metric $g$ gives an inner product on $L^{2}({\mathcal E})$,
which we denote by $\langle ~ , ~ \rangle$. We define an inner product on $L^{2}({\mathcal E}_{0})$ in a similar way.

We denote by $\Delta_{M}$ a Laplacian on $M$ acting on smooth sections of ${\mathcal E}$ with the principal symbol
$\sigma_{L}(\Delta_{M})(x^{\prime}, \xi^{\prime}) = \parallel \xi^{\prime} \parallel^{2}$, where $(x^{\prime}, \xi^{\prime})$ is a coordinate on $T^{\ast}M$.
It is well known \cite{BGV, Gi2} that there exist a connection
$\nabla : C^{\infty}({\mathcal E}) \rightarrow C^{\infty}(T^{\ast}M \otimes {\mathcal E})$ and
a bundle endomorphism $E : {\mathcal E} \rightarrow {\mathcal E}$ such that

\begin{eqnarray}   \label{E:2.1}
\Delta_{M} & = & - \left( \Tr \nabla^{2} + E \right).
\end{eqnarray}

\noindent
All through this paper, we assume that $\nabla$ is compatible with the Hermitian structure $(~, ~)$ and $E$ is self-adjoint, {\it i.e.}
for a smooth vector field $X$ on $M$ and smooth sections $s_{1}, s_{2} \in C^{\infty}({\mathcal E})$,

\begin{eqnarray}   \label{E:2.2}
X(s_{1}, s_{2}) = (\nabla_{X}s_{1}, s_{2}) + (s_{1}, \nabla_{X}s_{2}), \qquad (Es_{1}, s_{2}) = (s_{1}, Es_{2}).
\end{eqnarray}

\noindent
Then $\Delta_{M}$ is formally self-adjoint (Lemma 1.2.3 in \cite{Gi2}).
We also denote by $\Delta_{M_{0}}$ the extension of $\Delta_{M}$ to $M_{0}$.
If $\partial M \neq \emptyset$, we impose the Dirichlet or Neumann boundary condition $B$ on $\partial M$ and denote by $\Delta_{M, B}$ the Laplacian $\Delta_{M}$ subjected to $B$ on $\partial M$. We further assume that
$\Delta_{M, B}$ is a non-negative operator.
Many Laplacians including Hodge-De Rham Laplacians acting on differential forms satisfy these assumptions.
We next define the Robin boundary condition ${\frak R}_{i}$ on ${\mathcal N}_{i}$ in $M_{0}$ as follows.

\begin{definition}  \label{Definition:2.1}
Let $S : {\mathcal N} \rightarrow {\mathcal End} \left({\mathcal E}|_{{\mathcal N}} \right)$ be a section of the
endomorphism bundle ${\mathcal End} \left({\mathcal E}|_{{\mathcal N}} \right)$ over ${\mathcal N}$ such that
for each $y \in {\mathcal N}$, $S(y) : {\mathcal E}_{y} \rightarrow {\mathcal E}_{y}$ is symmetric.
Then, the Robin boundary conditions ${\frak R}_{1}$ and ${\frak R}_{2}$ are defined by
\begin{eqnarray*}
& & {\frak R}_{1} : C^{\infty}({\mathcal E}_{0}) \rightarrow C^{\infty}({\mathcal N}_{1}), \qquad
{\frak R}_{1} (\psi) = ~ \left( \nabla_{\partial_{{\mathcal N}}} \psi \right)\big|_{{\mathcal N}_{1}}
+ S \left( \psi|_{{\mathcal N}_{1}} \right),  \\
& & {\frak R}_{2} : C^{\infty}({\mathcal E}_{0}) \rightarrow C^{\infty}({\mathcal N}_{2}), \qquad
{\frak R}_{2} (\psi) = ~ \left(\nabla_{\partial_{{\mathcal N}}} \psi \right)\big|_{{\mathcal N}_{2}} + S \left( \psi|_{{\mathcal N}_{2}} \right).
\end{eqnarray*}
\end{definition}

\vspace{0.2 cm}
\noindent
{\it Remark}: If $S = 0$, then ${\frak R}_{1}$ and ${\frak R}_{2}$ are reduced to the Neumann boundary condition.  \newline

\vspace{0.2 cm}

All through this paper we are going to write $\Delta_{M_{0},{\frak R}_{0}, B} := \Delta_{M_{0},{\frak R}_{1},{\frak R}_{2}, B}$, $\Delta_{M_{0},D, N, B}$, etc, which means the Laplacian $\Delta_{M_{0}}$ with the boundary conditions on ${\mathcal N}_{1}$, ${\mathcal N}_{2}$ and $\partial M$, where ${\frak R}_{0} = ({\frak R}_{1}, {\frak R}_{2})$ and $D$, $N$ stand for the Dirichlet and Neumann conditions, respectively.
Then, the domains are given as follows.

\begin{eqnarray}    \label{E:2.3}
& & \Dom \left( \Delta_{M, B} \right) ~ = ~ \left\{ \phi \in C^{\infty}({\mathcal E}) \mid B(\phi) = 0 \right\},   \\
& & \Dom \left( \Delta_{M_{0}, {\frak R}_{0}, B} \right) ~ = ~ \left\{ \psi \in C^{\infty}({\mathcal E}_{0}) \mid {\frak R}_{1} \psi = {\frak R}_{2} \psi = 0, ~~ B(\psi) = 0 \right\}.  \nonumber
\end{eqnarray}

\vspace{0.2 cm}
\noindent
For $\phi$, $\psi \in \Dom \left( \Delta_{M_{0}, {\frak R}_{0}, B} \right)$ and
some local orthonormal basis $\{ e_{1}, \cdots, e_{m} \}$ on $TM$, the Green theorem (Section 1.4.8 in \cite{Gi2}) shows that

\begin{eqnarray}   \label{E:2.4}
\langle \Delta_{M_{0}, {\frak R}_{0}, B} \phi, ~ \psi \rangle_{M_{0}}  & = &
\int_{M_{0}} \bigg\{ \sum_{k=1}^{m} ( \nabla_{e_{k}} \phi, \nabla_{e_{k}} \psi) - (E \phi, \psi) \bigg\} dx ~ + ~
\int_{{\mathcal N}_{1}} ( S \phi|_{{\mathcal N}_{1}}, ~ \psi|_{{\mathcal N}_{1}} ) ~ dy   \\
& &  - ~ \int_{{\mathcal N}_{2}} ( S \phi|_{{\mathcal N}_{2}}, ~ \psi|_{{\mathcal N}_{2}} ) ~ dy ,    \nonumber
\end{eqnarray}

\noindent
which implies that $\Delta_{M_{0}, {\frak R}_{0}, B}$ is a symmetric operator having possibly finitely many negative eigenvalues.

\vspace{0.2 cm}
Let ${\mathcal D}$ be an elliptic $\Psi$DO of order $d > 0$ defined on an $L$-dimensional
compact Riemannian manifold with or without boundary which is bounded from below.
We assume that the principal symbol of ${\mathcal D}$ is positive definite.
When a boundary is not empty, ${\mathcal D}$ is assumed to be given a well posed boundary condition.
It is well known (Theorem 2.7 in \cite{GS}, (9) in \cite{Kas}) that $e^{-t {\mathcal D}}$ is a trace class operator and
$\Tr e^{-t {\mathcal D}}$ for $t \rightarrow 0^{+}$  has an asymptotic expansion of the type

\begin{eqnarray}    \label{E:2.11}
\Tr e^{-t {\mathcal D}} & = & \sum_{j=0}^{\infty} c_{j} t^{-\frac{L-j}{d}} + \sum_{k=1}^{\infty} \left( c^{\prime}_{k} t^{k} \ln t + c^{\prime\prime}_{k} t^{k} \right),
\end{eqnarray}

\noindent
where $c_{j}$'s and $c^{\prime}_{k}$'s are locally determined and $c_{k}^{\prime\prime}$'s are globally determined. In particular, a $t^{0} \ln t$ term does not appear.
If ${\mathcal D}$ is a differential operator, $c_{k}^{\prime}$ and $c_{k}^{\prime\prime}$ do not appear.
We denote the non-positive eigenspace of ${\mathcal D}$ by ${\mathcal L}$,
which is a finite dimensional vector space. We denote by ${\mathcal P}_{{\mathcal L}}$ the orthogonal projection onto ${\mathcal L}$.
We define

\begin{eqnarray}    \label{E:2.12}
\zeta_{{\mathcal D}}(s) & = & \sum_{0 \neq \mu_{j} \in \Spec({\mathcal D})} \mu_{j}^{-s} ~ = ~ \sum_{\mu_{j} < 0} \mu_{j}^{-s} ~ + ~
\frac{1}{\Gamma(s)} \int_{0}^{\infty} t^{s-1} \Tr e^{- t {\mathcal D}} \left( \Id  - {\mathcal P}_{{\mathcal L}} \right) dt.
\end{eqnarray}

\noindent
Then, (\ref{E:2.11}) shows that $\zeta_{{\mathcal D}}(s)$ is regular for $\Re s > \frac{L}{d}$ and has a meromorphic continuation having a regular value at $s = 0$.
When ${\mathcal D}$ is invertible, we define the zeta-determinant $\Det {\mathcal D}$  by

\begin{eqnarray}    \label{E:2.13}
\Det {\mathcal D} & = & e^{- \zeta_{{\mathcal D}}^{\prime}(0)}.
\end{eqnarray}

\noindent
When ${\mathcal D}$ has a non-trivial kernel, we define the modified zeta-determinant
$\Det^{\ast} {\mathcal D} := e^{- \zeta_{{\mathcal D}}^{\prime}(0)}$  by the same formula,
but we use the upper $\ast$ to distinguish it from the invertible case.

\vspace{0.2 cm}

In this paper, we are going to discuss the BFK-type gluing formula for $\Det^{\ast} \Delta_{M, B}$ with respect to the Robin boundary condition ${\frak R}_{i}$.
More precisely, when $\Delta_{M_{0}, {\frak R}_{0}, B}$ is a invertible operator, we are going to describe

\begin{eqnarray}   \label{E:2.14}
\ln \Det^{\ast} \Delta_{M, B} - \ln \Det \Delta_{M_{0}, {\frak R}_{0}, B}
\end{eqnarray}

\noindent
by using the zeta-determinant of an operator defined on $C^{\infty}({\mathcal E}|_{{\mathcal N}})$ and some additional terms defined locally on ${\mathcal N}$.
For this purpose, we are going to follow the arguments presented in \cite{BFK1}.
As suggested in \cite{BFK1}, we are going to consider one parameter families
$\Delta_{M, B} + \lambda$ and $~ \Delta_{M_{0}, {\frak R}_{0}, B} + \lambda ~$ rather than $\Delta_{M, B}$ and $\Delta_{M_{0}, {\frak R}_{0}, B}$.
Since $\Delta_{M_{0}, {\frak R}_{0}, B}$ may have negative eigenvalues, we cannot take $\lambda$ to be positive real numbers.
Instead, we take $\lambda$ as follows.

\begin{eqnarray}    \label{E:2.15}
\lambda = r e^{i \theta}, \qquad 0 \leq r < \infty, \qquad 0 < \theta < \frac{\pi}{2}.
\end{eqnarray}

\noindent
We also take the line $\{ re^{i \theta_{0}} \mid - \frac{3 \pi}{2} < \theta_{0} < - \pi \}$ as a branch cut to define the logarithm.
Then both $\Delta_{M, B} + \lambda$ and $~ \Delta_{M_{0}, {\frak R}_{0}, B} + \lambda ~$ are
invertible operators for $|\lambda| > 0$.
We first observe that for $0 \neq z = |z| e^{i \theta}$ with $0 < \theta < \frac{\pi}{2}$ and $\Re s > 0$,

\begin{eqnarray}     \label{E:2.16}
\int_{0}^{\infty} t^{s-1} e^{- t z} dt & = & z^{-s} \int_{0}^{\infty \cdot e^{i \theta}} t^{s-1} e^{-t} dt
~ = ~ z^{-s} \cdot \Gamma(s).
\end{eqnarray}

\vspace{0.2 cm}

We introduce the Poisson operator with respect to the Robin boundary conditions
${\frak R}_{i}$ as follows.

\begin{definition}   \label{Definition:2.2}
Suppose that $\Delta_{M_{0}, {\frak R}_{0}, B}$ is an invertible operator.
For $\lambda = r e^{i \theta}$ with $r \geq 0$ and $0 <  \theta < \frac{\pi}{2}$, the Poisson operator ${\mathcal P}_{0}(\lambda)$ with respect to ${\frak R}_{i}$ is a bounded linear map
${\mathcal P}_{0}(\lambda) : C^{\infty}({\mathcal E}|_{{\mathcal N}_{1}}) \oplus C^{\infty}({\mathcal E}|_{{\mathcal N}_{2}}) \rightarrow C^{\infty}({\mathcal E}_{0}) ~ $ satisfying
\begin{eqnarray*}
(\Delta_{M_{0}} + \lambda)  \cdot  {\mathcal P}_{0}(\lambda) = 0, \qquad
{\frak R}_{i}  \cdot  {\mathcal P}_{0}(\lambda) = \Id_{C^{\infty}({\mathcal E}|_{{\mathcal N}_{i}})},
\qquad B \cdot {\mathcal P}_{0}(\lambda) = 0.
\end{eqnarray*}
The invertibility of $~ \Delta_{M_{0}, {\frak R}_{0}, B} + \lambda ~$ shows that ${\mathcal P}_{0}(\lambda)$ is uniquely determined.
\end{definition}


\vspace{0.2 cm}
The Poisson operator ${\mathcal P}_{0}(\lambda)$ is constructed as follows.
For $x \in {\mathcal N}$, we denote by $\gamma_{x}(u)$ the unit speed geodesic starting from $x$
which points outward to $M_{1}$ and points inward to $M_{2}$ in the orthogonal way to ${\mathcal N}$.
Since ${\mathcal N}$ is compact, we can choose $\epsilon_{0} > 0$ uniformly such that $\gamma_{x}(u)$ is well defined
for all $x \in {\mathcal N}$ and $- \epsilon_{0} < u < \epsilon_{0}$.
In fact, $\gamma_{x}(u)$ is an integral curve to $\partial_{{\mathcal N}}$ on ${\mathcal N}$, and hence
we can extend $\partial_{{\mathcal N}}$ to a vector field on a collar neighborhood along $\gamma_{x}(u)$.
For an open subset $U$ of ${\mathcal N}$, we define

\begin{eqnarray}   \label{E:2.17}
V_{1} := (- \epsilon_{0},~ 0 ~] \times U, \qquad V_{2} ~ := ~  [~ 0, ~ \epsilon_{0}) \times U.
\end{eqnarray}

\noindent
Putting $x_{m} = u$, then $(x, x_{m})$ is a
local coordinate system in $V_{1} \cup V_{2}$, where $x = (x_{1}, \cdots, x_{m-1})$ is a local coordinate system in $U$.
When a normal coordinate system is chosen on $U$, we call $(x, x_{m})$ the boundary normal coordinate
system on $V_{1} \cup V_{2}$ (\cite{PS}).
For $f_{i} \in C^{\infty}({\mathcal E}|_{{\mathcal N}_{i}})$,
let ${\widetilde f}$ be an arbitrary extension of $f_{i}$  on $M_{0}$ such that
${\widetilde f}|_{{\mathcal N}_{i}} = f_{i}$ and $B({\widetilde f}) = 0$.
We choose a smooth function $h : M_{1} \cup M_{2} \rightarrow {\mathbb R}$ such that $h(x, x_{m}) = x_{m}$ on
$V_{1} \cup V_{2}$ and $0$ near $\partial M$.
Putting

\begin{eqnarray}   \label{E:2.18}
F(x, x_{m}) & = & h(x, x_{m}) ~ {\widetilde f}(x, x_{m}),
\end{eqnarray}

\noindent
we get

\begin{eqnarray}   \label{E:2.19}
{\frak R}_{i} \left( F(x, x_{m}) \right) ~ = ~ f_{i},  \qquad   B \left( F(x, x_{m}) \right) ~ = ~ 0.
\end{eqnarray}

\noindent
Then, ${\mathcal P}_{0}(\lambda) ( f_{1}, f_{2})$ is defined by

\begin{eqnarray}   \label{E:2.20}
{\mathcal P}_{0}(\lambda) (f_{1}, f_{2}) & = & F - (\Delta_{M_{0}, {\frak R}_{0}, B} + \lambda)^{-1} (\Delta_{M_{0}} + \lambda) F.
\end{eqnarray}

\noindent
We note that ${\mathcal P}_{0}(\lambda)$ satisfies the following properties.

\begin{lemma}  \label{Lemma:2.3}
\begin{eqnarray*}
\frac{d}{d \lambda} {\mathcal P}_{0}(\lambda) & = &
- (\Delta_{M_{0}, {\frak R}_{0}, B} + \lambda)^{-1} ~  \cdot ~ {\mathcal P}_{0}(\lambda).
\end{eqnarray*}
\end{lemma}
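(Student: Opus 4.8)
The plan is to differentiate the explicit formula (\ref{E:2.20}) for ${\mathcal P}_{0}(\lambda)$ directly, exploiting the fact that the extension $F$ in (\ref{E:2.18}) is built out of $f_{1}, f_{2}$ alone and is therefore manifestly independent of $\lambda$. Abbreviating the resolvent by $R(\lambda) := (\Delta_{M_{0}, {\frak R}_{0}, B} + \lambda)^{-1}$, formula (\ref{E:2.20}) reads
\begin{eqnarray*}
{\mathcal P}_{0}(\lambda) (f_{1}, f_{2}) & = & F - R(\lambda) \, (\Delta_{M_{0}} + \lambda) F ,
\end{eqnarray*}
so that the entire $\lambda$-dependence resides in the factor $R(\lambda)$ and in the scalar shift $(\Delta_{M_{0}} + \lambda)$.

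First I would record the standard resolvent derivative $\frac{d}{d\lambda} R(\lambda) = - R(\lambda)^{2}$, valid throughout the resolvent set, together with the trivial identity $\frac{d}{d\lambda}(\Delta_{M_{0}} + \lambda) F = F$. Applying the product rule to the second term of the displayed formula then gives
\begin{eqnarray*}
\frac{d}{d\lambda} {\mathcal P}_{0}(\lambda) (f_{1}, f_{2}) & = & R(\lambda)^{2} (\Delta_{M_{0}} + \lambda) F - R(\lambda) F .
\end{eqnarray*}
Next I would compute the claimed right-hand side by applying $-R(\lambda)$ to the displayed formula for ${\mathcal P}_{0}(\lambda)$, obtaining
\begin{eqnarray*}
- R(\lambda) \, {\mathcal P}_{0}(\lambda) (f_{1}, f_{2}) & = & - R(\lambda) F + R(\lambda)^{2} (\Delta_{M_{0}} + \lambda) F ,
\end{eqnarray*}
which coincides term for term with the expression just derived. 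This establishes the asserted identity.

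The main obstacle is justifying, rather than merely performing, these manipulations: one must verify that $R(\lambda)$ is genuinely differentiable in $\lambda$ in an operator norm on the relevant function spaces, so that the term-by-term differentiation is legitimate. This follows because, for $\lambda$ in the sector (\ref{E:2.15}) with $|\lambda| > 0$, the operator $\Delta_{M_{0}, {\frak R}_{0}, B} + \lambda$ is invertible (as noted after Definition \ref{Definition:2.2}), and the resolvent of an operator with discrete spectrum is holomorphic off that spectrum; the resolvent identity $R(\lambda) - R(\mu) = (\mu - \lambda) R(\lambda) R(\mu)$ then yields $\frac{d}{d\lambda} R(\lambda) = - R(\lambda)^{2}$ in the limit $\mu \to \lambda$. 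Alternatively, one can argue intrinsically by differentiating the three defining relations of Definition \ref{Definition:2.2}: since $\mathfrak{R}_{i} {\mathcal P}_{0}(\lambda)$ and $B \, {\mathcal P}_{0}(\lambda)$ are constant in $\lambda$, their derivatives vanish, so $Q := \frac{d}{d\lambda} {\mathcal P}_{0}(\lambda)$ satisfies ${\frak R}_{i} Q = 0$ and $B Q = 0$, while differentiating $(\Delta_{M_{0}} + \lambda) {\mathcal P}_{0}(\lambda) = 0$ gives $(\Delta_{M_{0}} + \lambda) Q = - {\mathcal P}_{0}(\lambda)$. Because $Q$ satisfies the boundary conditions on ${\mathcal N}_{1}, {\mathcal N}_{2}$ and $\partial M$, the left side equals $(\Delta_{M_{0}, {\frak R}_{0}, B} + \lambda) Q$, and applying $R(\lambda)$ yields $Q = - R(\lambda) {\mathcal P}_{0}(\lambda)$, as desired.
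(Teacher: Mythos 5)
Your proposal is correct, but your primary argument takes a genuinely different route from the paper. The paper's proof is exactly your closing ``alternatively'' paragraph: it differentiates the three defining relations of Definition \ref{Definition:2.2}, observes that $Q := \frac{d}{d\lambda}{\mathcal P}_{0}(\lambda)$ satisfies ${\frak R}_{i} Q = 0$, $B Q = 0$ and $(\Delta_{M_{0}} + \lambda) Q = -{\mathcal P}_{0}(\lambda)$, and then inverts $\Delta_{M_{0}, {\frak R}_{0}, B} + \lambda$ on $Q$ (the paper compresses this last step into ``which leads to the result''). Your main computation instead differentiates the explicit construction (\ref{E:2.20}), using $\frac{d}{d\lambda}R(\lambda) = -R(\lambda)^{2}$ from the resolvent identity and the $\lambda$-independence of the extension $F$, and matches the result term by term against $-R(\lambda){\mathcal P}_{0}(\lambda)$. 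What your route buys is that the differentiability of ${\mathcal P}_{0}(\lambda)$ is \emph{established} rather than assumed: the paper's two-line proof tacitly presupposes that $\frac{d}{d\lambda}{\mathcal P}_{0}(\lambda)$ exists and that differentiation commutes with the boundary operators, whereas your appeal to holomorphy of the resolvent on the sector (\ref{E:2.15}) grounds this. What the paper's route buys is brevity and independence from the particular choice of extension $F$ in (\ref{E:2.18}) -- it works directly from the characterizing properties of the Poisson operator, so no check is needed that the answer is independent of $F$ (in your version this independence is automatic only because ${\mathcal P}_{0}(\lambda)$ itself is, by uniqueness). Since you supply both arguments and they reinforce each other, the proposal is complete.
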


\begin{proof}
Differentiating the equalities in Definition \ref{Definition:2.2} with respect to $\lambda$, we obtain the following equalities.
\begin{eqnarray*}
{\mathcal P}_{0}(\lambda) ~ + ~
(\Delta_{M_{0}} + \lambda) \cdot \frac{d}{d \lambda} {\mathcal P}_{0}(\lambda) = 0,
\qquad  {\frak R}_{i}  \cdot \frac{d}{d \lambda} ~ {\mathcal P}_{0}(\lambda) = 0,
\qquad  B  \cdot \frac{d}{d \lambda} ~ {\mathcal P}_{0}(\lambda) = 0,
\end{eqnarray*}
which leads to the result.
\end{proof}

\noindent
We may identify $L^{2}({\mathcal E}_{0})$ with $L^{2}({\mathcal E})$ and hence we may regard
$(\Delta_{M_{0}, {\frak R}_{0}, B} + \lambda)^{-1}$ as a bounded
operator acting on $L^{2}({\mathcal E})$. This observation leads to the following fact.

\begin{lemma}   \label{Lemma:2.4}
\begin{eqnarray*}
(\Delta_{M, B} + \lambda)^{-1} - (\Delta_{M_{0}, {\frak R}_{0}, B} + \lambda)^{-1} & = &
{\mathcal P}_{0}(\lambda) \cdot \iota \cdot {\frak R}_{1} \cdot (\Delta_{M, B} + \lambda)^{-1},
\end{eqnarray*}
where $\iota : C^{\infty}({\mathcal E}|_{{\mathcal N}}) \rightarrow C^{\infty}({\mathcal E}|_{{\mathcal N}_{1}}) \oplus C^{\infty}({\mathcal E}|_{{\mathcal N}_{2}})$ is the diagonal inclusion.
\end{lemma}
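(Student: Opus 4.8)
The plan is to verify this operator identity by applying both sides to an arbitrary $g \in L^{2}({\mathcal E})$ and comparing the resulting sections on the cut manifold $M_{0}$. First I would set $\phi := (\Delta_{M, B} + \lambda)^{-1} g$, so that $\phi$ solves $(\Delta_{M} + \lambda)\phi = g$ on $M$ with $B(\phi) = 0$; by elliptic regularity $\phi \in H^{2}({\mathcal E})$, so that its restriction to ${\mathcal N}$ and its normal derivative $\nabla_{\partial_{{\mathcal N}}}\phi$ along ${\mathcal N}$ are well defined as traces. It is convenient to establish the identity first for smooth $g$, where all traces are classical, and then extend by continuity, since each factor on the right-hand side is bounded: the resolvent maps $L^{2} \to H^{2}$, the Robin trace ${\frak R}_{1}$ maps $H^{2} \to H^{1/2}({\mathcal E}|_{{\mathcal N}})$, the diagonal inclusion $\iota$ is bounded, and ${\mathcal P}_{0}(\lambda)$ is bounded by Definition \ref{Definition:2.2}.

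The key observation is that $\phi$, viewed as a section on $M_{0}$ under the identification $L^{2}({\mathcal E}_{0}) \cong L^{2}({\mathcal E})$, has no jump across ${\mathcal N}$. Because $\partial_{{\mathcal N}}$ is a single smooth vector field near ${\mathcal N}$ (pointing outward to ${\mathcal N}_{1}$ and inward to ${\mathcal N}_{2}$), both $\phi|_{{\mathcal N}}$ and $(\nabla_{\partial_{{\mathcal N}}}\phi)|_{{\mathcal N}}$ are continuous across the hypersurface. Hence ${\frak R}_{1}\phi = {\frak R}_{2}\phi$ as one and the same element $f \in C^{\infty}({\mathcal E}|_{{\mathcal N}})$, which means exactly $\iota \cdot {\frak R}_{1}\phi = (f, f) = ({\frak R}_{1}\phi, {\frak R}_{2}\phi)$, the boundary data fed into the Poisson operator.

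Next I would set $\psi := {\mathcal P}_{0}(\lambda)(f, f) = {\mathcal P}_{0}(\lambda) \cdot \iota \cdot {\frak R}_{1} \cdot (\Delta_{M, B} + \lambda)^{-1} g$ and examine $\phi - \psi$ on $M_{0}$. By Definition \ref{Definition:2.2}, $\psi$ satisfies $(\Delta_{M_{0}} + \lambda)\psi = 0$, ${\frak R}_{i}\psi = f$, and $B(\psi) = 0$, while on $M_{0}$ we have $(\Delta_{M_{0}} + \lambda)\phi = g$, ${\frak R}_{i}\phi = f$, and $B(\phi) = 0$. Subtracting gives $(\Delta_{M_{0}} + \lambda)(\phi - \psi) = g$ with ${\frak R}_{1}(\phi - \psi) = {\frak R}_{2}(\phi - \psi) = 0$ and $B(\phi - \psi) = 0$, so $\phi - \psi \in \Dom(\Delta_{M_{0}, {\frak R}_{0}, B})$. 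The invertibility of $\Delta_{M_{0}, {\frak R}_{0}, B} + \lambda$ then forces $\phi - \psi = (\Delta_{M_{0}, {\frak R}_{0}, B} + \lambda)^{-1} g$. Rearranging yields $\psi = (\Delta_{M, B} + \lambda)^{-1} g - (\Delta_{M_{0}, {\frak R}_{0}, B} + \lambda)^{-1} g$, which is precisely the asserted identity applied to $g$.

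I expect the only delicate point to be the bookkeeping of the normal-derivative traces across ${\mathcal N}$: one must check that the \emph{single} oriented field $\partial_{{\mathcal N}}$ makes the two Robin data coincide (rather than differ by a sign, as the two pieces' outward normals would), so that the diagonal $\iota$ is indeed the correct map. Once the no-jump matching ${\frak R}_{1}\phi = {\frak R}_{2}\phi$ is secured, the remainder reduces to the defining properties of ${\mathcal P}_{0}(\lambda)$ and the uniqueness of the resolvent, and the passage from smooth $g$ to all of $L^{2}({\mathcal E})$ is routine given the boundedness of each factor.
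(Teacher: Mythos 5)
Your proposal is correct and is in substance the same argument as the paper's: the paper sets $A(\lambda) = (\Delta_{M,B}+\lambda)^{-1} - (\Delta_{M_{0},{\frak R}_{0},B}+\lambda)^{-1}$ and checks $(\Delta_{M_{0}}+\lambda)A(\lambda)=0$, $B\cdot A(\lambda)=0$, and ${\frak R}_{0}\cdot A(\lambda) = \iota\cdot{\frak R}_{1}\cdot(\Delta_{M,B}+\lambda)^{-1}$ (using exactly your no-jump observation across ${\mathcal N}$), then concludes by the uniqueness of ${\mathcal P}_{0}(\lambda)$, which is your computation with $\phi-\psi$ read in the other direction. Your additional care about the sign convention for the single field $\partial_{{\mathcal N}}$ and the density argument extending from smooth $g$ to $L^{2}$ fills in details the paper leaves implicit.
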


\begin{proof}
Setting $A(\lambda) = (\Delta_{M, B} + \lambda)^{-1} - (\Delta_{M_{0}, {\frak R}_{0}, B} + \lambda)^{-1}$, we get
\begin{eqnarray*}
(\Delta_{M} + \lambda) \cdot  A(\lambda) = 0,  \qquad  B \cdot A(\lambda) = 0, \qquad
{\frak R}_{0} \cdot A(\lambda) = \iota \cdot {\frak R}_{1} \cdot (\Delta_{M, B} + \lambda)^{-1},
\end{eqnarray*}
which leads to the result.
\end{proof}

We next define a trace map $\gamma_{0}$ and a difference operator $\delta_{d}$ as follows.
\begin{eqnarray}    \label{E:2.21}
& & \gamma_{0} : C^{\infty}({\mathcal E}_{0}) \rightarrow  C^{\infty}({\mathcal E}|_{{\mathcal N}_{1}}) \oplus C^{\infty}({\mathcal E}|_{{\mathcal N}_{2}}),  \qquad
\gamma_{0}(\psi) = (\psi|_{{\mathcal N}_{1}}, ~ \psi|_{{\mathcal N}_{2}}),   \\
& & \delta_{d} : C^{\infty}({\mathcal E}|_{{\mathcal N}_{1}}) \oplus C^{\infty}({\mathcal E}|_{{\mathcal N}_{2}}) \rightarrow
C^{\infty}({\mathcal E}|_{{\mathcal N}}), \qquad
\delta_{d}(f, ~ g) = f - g.      \nonumber
\end{eqnarray}

\noindent
We define the complementary boundary conditions as follows (\cite{BFK1}).

\begin{definition}   \label{Definition:2.5}
Let ${\frak B}_{1}$ and ${\frak B}_{2}$ be two elliptic boundary conditions for $\Delta_{M_{0}, B}$ on ${\mathcal N}_{1} \cup {\mathcal N}_{2}$.
${\frak B}_{1}$ and ${\frak B}_{2}$ are called complementary if the null space of
\begin{eqnarray*}
{\frak B}_{1} \oplus {\frak B}_{2} : C^{\infty}({\mathcal E}_{0}) \rightarrow \bigg( C^{\infty}({\mathcal E}|_{{\mathcal N}_{1}}) \oplus C^{\infty}({\mathcal E}|_{{\mathcal N}_{2}}) \bigg) \oplus
\bigg( C^{\infty}({\mathcal E}|_{{\mathcal N}_{1}}) \oplus C^{\infty}({\mathcal E}|_{{\mathcal N}_{2}}) \bigg)
\end{eqnarray*}
consists of sections $\psi \in C^{\infty}({\mathcal E}_{0})$ satisfying $\psi\big|_{{\mathcal N}_{1} \cup {\mathcal N}_{2}}  ~ = ~ (\nabla_{\partial_{{\mathcal N}}} \psi)\big|_{{\mathcal N}_{1} \cup {\mathcal N}_{2}} ~ = ~ 0$.
\end{definition}

\noindent
For an example, the Dirichlet condition is complementary to the Neumann condition, and vice versa.
Similarly, the Dirichlet condition is also complementary to ${\frak R}_{0}$.
We use this fact to define an operator, which corresponds to the Dirichlet-to-Neumann operator in \cite{BFK1}.

\begin{definition}   \label{Definition:2.6}
We assume that $\Delta_{M_{0}, {\frak R}_{0}, B}$ is an invertible operator. For $\lambda = r e^{i\theta}$ with
$r \geq 0$, we define an operator $R_{S}(\lambda) : C^{\infty}({\mathcal E}|_{{\mathcal N}}) \rightarrow C^{\infty}({\mathcal E}|_{{\mathcal N}})$ as follows.
For $f \in C^{\infty}({\mathcal E}|_{{\mathcal N}})$,
\begin{eqnarray*}
R_{S}(\lambda) ~ f & = & \delta_{d} \cdot \gamma_{0} \cdot {\mathcal P}_{0}(\lambda) \cdot \iota ~(f).
\end{eqnarray*}
\end{definition}

\vspace{0.2 cm}

For $\lambda \in {\mathbb C} - (-\infty, 0)$, we define ${\mathcal W}_{S}(\lambda) : C^{\infty}({\mathcal E}|_{{\mathcal N}_{1}}) \oplus C^{\infty}({\mathcal E}|_{{\mathcal N}_{2}}) \rightarrow C^{\infty}({\mathcal E}|_{{\mathcal N}_{1}}) \oplus C^{\infty}({\mathcal E}|_{{\mathcal N}_{2}})$ as follows.
Given $(f, g) \in C^{\infty}({\mathcal E}|_{{\mathcal N}_{1}}) \oplus C^{\infty}({\mathcal E}|_{{\mathcal N}_{2}})$, we choose $\psi \in C^{\infty}({\mathcal E}_{0})$ satisfying

\begin{eqnarray}   \label{E:2.70}
(\Delta_{M_{0}} + \lambda) \psi = 0, \qquad \psi|_{{\mathcal N}_{1}} = f, \qquad \psi|_{{\mathcal N}_{2}} = g,
\qquad B(\psi) = 0.
\end{eqnarray}

\noindent
Indeed, $\psi$ is constructed as follows. Let ${\widetilde f}$ be any smooth extension of $(f, g)$ on $M_{0}$
satisfying ${\widetilde f}|_{{\mathcal N}_{1}} = f$, ${\widetilde f}|_{{\mathcal N}_{2}} = g$
and $B({\widetilde f}) = 0$. Then, $\psi$ is obtained by
$\psi = {\widetilde f} - (\Delta_{M_{0}, D, D, B} + \lambda)^{-1} (\Delta_{M_{0}} + \lambda) {\widetilde f}$.
We define

\begin{eqnarray}    \label{E:2.71}
{\mathcal W}_{S}(\lambda)(f, g) ~ = ~
\left( \big( \nabla_{\partial_{{\mathcal N}}} \psi \big)|_{{\mathcal N}_{1}} + S f, ~
\big( \nabla_{\partial_{{\mathcal N}}} \psi \big)|_{{\mathcal N}_{2}} + S g \right).
\end{eqnarray}

\noindent
Then, it follows that

\begin{eqnarray}    \label{E:2.72}
\Ker {\mathcal W}_{S}(0) & = &  \big\{ (\psi|_{{\mathcal N}_{1}}, ~\psi|_{{\mathcal N}_{2}}) \mid
\psi \in \Ker \Delta_{M_{0}, {\frak R}_{0}, B} \big\}.
\end{eqnarray}

\noindent
Putting $\psi = {\mathcal P}_{0}(\lambda) \iota(f)$, we get

\begin{eqnarray}   \label{E:2.22}
(f, ~ f) ~ = ~ \big( \left( \nabla_{\partial_{{\mathcal N}}} \psi \right)|_{{\mathcal N}_{1}} + S \left( \psi|_{{\mathcal N}_{1}} \right), ~ \left( \nabla_{\partial_{{\mathcal N}}} \psi \right)|_{{\mathcal N}_{2}} +
S \left( \psi|_{{\mathcal N}_{2}} \right) \big) ~ = ~
{\mathcal W}_{S}(\lambda) \big( \psi|_{{\mathcal N}_{1}}, ~ \psi|_{{\mathcal N}_{2}} \big),
\end{eqnarray}

\noindent
which shows that

\begin{eqnarray}  \label{E:2.90}
R_{S}(\lambda) & = & \delta_{d} \cdot {\mathcal W}_{S}(\lambda)^{-1} \cdot \iota.
\end{eqnarray}

\vspace{0.2 cm}
To describe ${\mathcal W}(\lambda)$ precisely,
we introduce $Q_{i}(\lambda) : C^{\infty}({\mathcal E}|_{{\mathcal N}_{i}}) \rightarrow C^{\infty}({\mathcal E}|_{{\mathcal N}_{i}})$ and $V_{i}(\lambda) : C^{\infty}({\mathcal E}|_{{\mathcal N}_{i}}) \rightarrow C^{\infty}({\mathcal E}|_{{\mathcal N}_{j}})$ as follows,
where $j = i + (-1)^{i+1}$.
For $ f \in C^{\infty}({\mathcal E}|_{{\mathcal N}})$, choose $\phi$, $\psi \in C^{\infty}({\mathcal E}_{0})$ such that

\begin{eqnarray}   \label{E:2.5}
& & \left( \Delta_{M_{0}} + \lambda \right) \phi ~ = ~ \left( \Delta_{M_{0}} + \lambda \right) \psi = 0, \qquad B(\phi) = B(\psi) = 0,  \\
& & \phi|_{{\mathcal N}_{1}} = f, \quad \phi|_{{\mathcal N}_{2}} = 0,  \qquad
\psi|_{{\mathcal N}_{1}} = 0, \quad \psi|_{{\mathcal N}_{2}} = f.    \nonumber
\end{eqnarray}

\noindent
We define

\begin{eqnarray}   \label{E:2.6}
& & Q_{1}(\lambda) f = \left( \nabla_{\partial_{{\mathcal N}}} \phi \right)|_{{\mathcal N}_{1}}, \qquad
V_{1}(\lambda)f = - \left(\nabla_{\partial_{{\mathcal N}}} \phi \right)|_{{\mathcal N}_{2}},  \\
& & Q_{2}(\lambda) f = - \left( \nabla_{\partial_{{\mathcal N}}} \psi \right)|_{{\mathcal N}_{2}}, \qquad
V_{2}(\lambda)f = \left( \nabla_{\partial_{{\mathcal N}}} \psi \right)|_{{\mathcal N}_{1}}.   \nonumber
\end{eqnarray}

\noindent
Then, $Q_{i}(\lambda)$'s are elliptic pseudodifferential operators of order $1$ and $V_{i}(\lambda)$'s are proved to be smoothing operators in Lemma 2.3 in \cite{KL4}.
When $\lambda = 0$, the kernels are given by

\begin{eqnarray}  \label{E:2.7}
& & \Ker Q_{1}(0) = \{ \phi|_{{\mathcal N}_{1}} \mid \phi \in \Ker \Delta_{M_{0}, N, D, B} \}, \qquad
\Ker Q_{2}(0) = \{ \phi|_{{\mathcal N}_{2}} \mid \phi \in \Ker \Delta_{M_{0}, D, N, B} \},  \\
& & \Ker \left( Q_{1}(0) + S \right) = \{ \phi|_{{\mathcal N}_{1}} \mid \phi \in \Ker \Delta_{M_{0}, {\frak R}_{1},D,B} \}, \quad
\Ker \left( Q_{2}(0) - S \right) = \{ \phi|_{{\mathcal N}_{2}} \mid \phi \in \Ker \Delta_{M_{0},D,{\frak R}_{2}, B} \},  \nonumber
\end{eqnarray}

\noindent
which leads to

\begin{eqnarray}   \label{E:2.8}
& & \Dim \Ker Q_{1}(0) ~ = ~ \Dim \Ker \Delta_{M_{0}, N, D, B}, \qquad
\Dim \Ker Q_{2}(0) ~ = ~ \Dim \Ker \Delta_{M_{0}, D, N, B}, \\
& & \Dim \Ker \left( Q_{1}(0) + S \right) ~ = ~ \Ker \Delta_{M_{0}, {\frak R}_{1}, D, B}, \qquad
\Dim \Ker \left( Q_{2}(0) - S \right) ~ = ~ \Ker \Delta_{M_{0}, D, {\frak R}_{2}, B}.   \nonumber
\end{eqnarray}

\noindent
For $f$, $h \in C^{\infty}({\mathcal E}|_{{\mathcal N}})$ and $i = 1, 2$, choose $\phi_{i}$, $\psi_{i} \in C^{\infty}({\mathcal E}_{0})$ such that

\begin{eqnarray*}
& & \left( \Delta_{M_{0}} + \lambda \right) \phi_{i} ~ = ~ \left( \Delta_{M_{0}} + \lambda \right) \psi_{i} = 0, \qquad B(\phi_{i}) = B(\psi_{i}) = 0,  \\
& & \phi_{i}|_{{\mathcal N}_{i}} = f, \qquad \psi_{i}|_{{\mathcal N}_{i}} = h, \qquad
\phi_{i}|_{{\mathcal N}_{j}} = \psi_{i}|_{{\mathcal N}_{j}} = 0,
\end{eqnarray*}

\noindent
where $j = i + (-1)^{i+1}$.
The Green theorem shows that

\begin{eqnarray}    \label{E:2.9}
\langle Q_{i}(\lambda) f, ~ h \rangle_{{\mathcal N}_{i}} & = &
\int_{M_{0}} \bigg\{ \sum_{k=1}^{m} ( \nabla_{e_{k}} \phi_{i},  \nabla_{e_{k}} \psi_{i}) - (E \phi_{i},  \psi_{i}) \bigg\} dx ~ + ~
\lambda ~ \int_{M_{0}} ( \phi_{i},  \psi_{i} ) dx,
\end{eqnarray}

\noindent
which implies that the adjoint of $Q_{i}(\lambda)$ is $Q_{i}({\overline \lambda})$ and
$Q_{i}(\lambda)$ is bounded from below for $\lambda \in {\mathbb R}$.
This shows that $Q_{i}(\lambda) + S$ is also bounded below.
For later use, we put

\begin{eqnarray}    \label{E:2.10}
R_{\DN}(\lambda) & := & Q_{1}(\lambda) + Q_{2}(\lambda) + V_{1}(\lambda) + V_{2}(\lambda)
~ = ~ \delta_{d} \cdot {\mathcal W}(\lambda) \cdot \iota,
\end{eqnarray}

\noindent
where ${\mathcal W}(\lambda) = {\mathcal W}_{S}(\lambda)$ with $S = 0$.
$R_{\DN}(\lambda)$ is called the Dirichlet-to-Neumann operator and plays a central role in the BFK-gluing formula with respect to the Dirichlet boundary condition
(\cite{BFK1}, \cite{Ca}, \cite{KL1}, \cite{KL2}, \cite{KL3}, \cite{KL4}, \cite{Le1}, \cite{Le2}).
It follows from the definition that

\begin{eqnarray*}
{\mathcal W}_{S}(\lambda)  \left( \begin{array}{clcr} f \\ g \end{array} \right) & = & \left( \begin{array}{clcr} Q_{1}(\lambda) + S & V_{2}(\lambda) \\ - V_{1}(\lambda) &
- Q_{2}(\lambda) + S \end{array} \right) \left( \begin{array}{clcr} f \\ g \end{array} \right),
\end{eqnarray*}

\noindent
which together with (\ref{E:2.90}) leads to the following result, which is useful in computing the homogeneoue symbol of $R_{S}(\lambda)$.

\begin{lemma}   \label{Lemma:2.7}
\begin{eqnarray*}
R_{S}(\lambda) & = & \delta_{d} \cdot {\mathcal W}_{S}(\lambda)^{-1} \cdot \iota ~ = ~
\delta_{d} \cdot \left( \begin{array}{clcr} Q_{1}(\lambda) + S & V_{2}(\lambda) \\ - V_{1}(\lambda) & - Q_{2}(\lambda) + S \end{array} \right)^{-1} \cdot \iota.
\end{eqnarray*}
In particular, the homogeneous symbol of $R_{S}(\lambda)$ is given by

\begin{eqnarray*}
\sigma(R_{S}(\lambda)) & = & \sigma\big((Q_{1}(\lambda) + S)^{-1} \big) ~ + ~
\sigma \big((Q_{2}(\lambda) - S)^{-1}\big).
\end{eqnarray*}
If $M_{0}$ is not connected, {\it i.e.} $M = M_{1} \cup_{{\mathcal N}} M_{2}$, then $V_{1}(\lambda) = V_{2}(\lambda) = 0$ and hence
\begin{eqnarray*}
R_{S}(\lambda) & = & (Q_{1}(\lambda) + S)^{-1} ~ + ~ (Q_{2}(\lambda) - S)^{-1}.
\end{eqnarray*}
\end{lemma}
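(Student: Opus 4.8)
The plan is to read off the first equality directly and then reduce the whole statement to the behaviour of the two diagonal blocks of $\mathcal{W}_{S}(\lambda)$. The identity $R_{S}(\lambda) = \delta_{d} \cdot \mathcal{W}_{S}(\lambda)^{-1} \cdot \iota$ is exactly (\ref{E:2.90}), so the first displayed equality is obtained simply by substituting the matrix representation of $\mathcal{W}_{S}(\lambda)$ recorded just before the statement; no further work is needed there. The substance of the lemma is therefore the passage from this $2\times 2$ operator matrix to the symbol identity and to the disconnected case.

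For the homogeneous symbol I would exploit that, by Lemma 2.3 of \cite{KL4} (quoted after (\ref{E:2.6})), the off-diagonal entries $V_{1}(\lambda)$ and $V_{2}(\lambda)$ are smoothing operators, whereas the diagonal entries $Q_{1}(\lambda)+S$ and $-Q_{2}(\lambda)+S = -\big(Q_{2}(\lambda)-S\big)$ are elliptic $\Psi$DOs of order $1$ that are invertible for $\lambda$ in the admissible sector. Writing $\mathcal{W}_{S}(\lambda) = \mathcal{A}(\lambda) + \mathcal{R}(\lambda)$ with $\mathcal{A}(\lambda) = \mathrm{diag}\big(Q_{1}(\lambda)+S,\, -Q_{2}(\lambda)+S\big)$ and $\mathcal{R}(\lambda)$ smoothing, I would invert via $\mathcal{W}_{S}(\lambda)^{-1} = \big(\Id + \mathcal{A}(\lambda)^{-1}\mathcal{R}(\lambda)\big)^{-1}\mathcal{A}(\lambda)^{-1}$ and note that $\mathcal{A}(\lambda)^{-1}\mathcal{R}(\lambda)$ is smoothing, so the Neumann series gives $\big(\Id + \mathcal{A}(\lambda)^{-1}\mathcal{R}(\lambda)\big)^{-1} = \Id + (\text{smoothing})$. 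Hence $\mathcal{W}_{S}(\lambda)^{-1} = \mathcal{A}(\lambda)^{-1} + (\text{smoothing}) = \mathrm{diag}\big((Q_{1}+S)^{-1},\,(-Q_{2}+S)^{-1}\big) + (\text{smoothing})$, so the two sides carry the same homogeneous symbol. Applying $\delta_{d}$ on the left and $\iota$ on the right and using $\delta_{d}(a,b)=a-b$, $\iota(f)=(f,f)$ yields $R_{S}(\lambda) = (Q_{1}(\lambda)+S)^{-1} - (-Q_{2}(\lambda)+S)^{-1} + (\text{smoothing}) = (Q_{1}(\lambda)+S)^{-1} + (Q_{2}(\lambda)-S)^{-1} + (\text{smoothing})$, and the symbol identity follows by passing to homogeneous symbols.

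For the disconnected case $M = M_{1} \cup_{\mathcal{N}} M_{2}$ I would show that $V_{1}(\lambda)$ and $V_{2}(\lambda)$ vanish \emph{exactly}, not merely modulo smoothing. With $\phi$ as in (\ref{E:2.5}), its restriction to the component $M_{2}$ solves $(\Delta_{M_{2}}+\lambda)\phi = 0$ with vanishing Dirichlet data on all of $\partial M_{2}$; invertibility of $\Delta_{M_{0},D,D,B}+\lambda$ forces $\phi \equiv 0$ on $M_{2}$, whence $\big(\nabla_{\partial_{\mathcal{N}}}\phi\big)|_{\mathcal{N}_{2}} = 0$ and $V_{1}(\lambda)=0$; the argument for $V_{2}(\lambda)=0$ is symmetric. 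Then $\mathcal{W}_{S}(\lambda)$ is genuinely block-diagonal, its inverse is $\mathrm{diag}\big((Q_{1}+S)^{-1},\,(-Q_{2}+S)^{-1}\big)$ with no remainder, and $R_{S}(\lambda) = (Q_{1}(\lambda)+S)^{-1} + (Q_{2}(\lambda)-S)^{-1}$ follows exactly. The main obstacle is the second step: one must verify that feeding the smoothing off-diagonal blocks through the inversion does not disturb the homogeneous symbol of the inverse, which rests on the invertibility (or at least ellipticity and the existence of a parametrix) of the order-$1$ diagonal blocks $Q_{1}(\lambda)+S$ and $Q_{2}(\lambda)-S$ for $\lambda$ in the chosen sector; once that is secured, the Neumann-series bookkeeping is routine.
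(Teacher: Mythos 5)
Your proposal is correct and follows the same route the paper intends: the paper states Lemma \ref{Lemma:2.7} as an immediate consequence of (\ref{E:2.90}) and the displayed matrix form of ${\mathcal W}_{S}(\lambda)$, and your argument simply fills in the implicit details, namely that the off-diagonal entries $V_{i}(\lambda)$ are smoothing (Lemma 2.3 of \cite{KL4}), so inverting the $2\times 2$ operator matrix perturbs the block-diagonal inverse only by a smoothing term, while in the disconnected case $V_{1}(\lambda)=V_{2}(\lambda)=0$ exactly by uniqueness on each component. The only cosmetic nicety is that the Neumann series for $\big(\Id + {\mathcal A}(\lambda)^{-1}{\mathcal R}(\lambda)\big)^{-1}$ need not converge; one should instead use the identity $(\Id+K)^{-1}=\Id-K+K(\Id+K)^{-1}K$, which exhibits the difference from $\Id$ as smoothing whenever $K$ is smoothing and the inverse exists, as it does here since ${\mathcal W}_{S}(\lambda)$ and the diagonal blocks are invertible for $\lambda$ in the chosen sector.
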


\vspace{0.2 cm}

Under some fixed coordinate system the homogeneous symbol of $R_{S}(\lambda)$ can be computed in terms of those of $Q_{1}(\lambda)$ and $Q_{2}(\lambda)$
by using the formula of the composition and inverse of pseudodifferential operators (\cite{Gi1}, \cite{Sh}).
If $\Delta_{M_{0}, {\frak R}_{0}, B}$ is an invertible operator, then $R_{S}(0)$ is well defined,
but may have a non-trivial kernel.

\begin{lemma}   \label{Lemma:2.8}
Suppose that $\Delta_{M_{0}, {\frak R}_{0}, B}$ is an invertible operator.
The kernel of $R_{S}(0)$ is given by
\begin{eqnarray*}
\Ker R_{S}(0) & = & \{ (\nabla_{\partial_{{\mathcal N}}} \psi )\big|_{{\mathcal N}} + S (\psi|_{{\mathcal N}}) ~ \big| ~ \psi \in \Ker \Delta_{M, B} \}.
\end{eqnarray*}
In particular, $\Dim \Ker R_{S}(0) ~ = ~ \Dim \Ker R_{\DN}(0) ~ = ~ \Dim \Ker \Delta_{M, B}$.
\end{lemma}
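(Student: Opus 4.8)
The plan is to work from the Poisson-operator description $R_{S}(0) = \delta_{d} \cdot \gamma_{0} \cdot {\mathcal P}_{0}(0) \cdot \iota$ of Definition \ref{Definition:2.6}. For $f \in C^{\infty}({\mathcal E}|_{{\mathcal N}})$ set $\psi = {\mathcal P}_{0}(0) \iota(f)$, so that by Definition \ref{Definition:2.2} the section $\psi$ satisfies $\Delta_{M_{0}} \psi = 0$, $B(\psi) = 0$ and ${\frak R}_{1}\psi = {\frak R}_{2}\psi = f$, while $R_{S}(0) f = \psi|_{{\mathcal N}_{1}} - \psi|_{{\mathcal N}_{2}}$. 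I would then prove the asserted set equality by two inclusions and deduce the dimension statement from a bijection.

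For the inclusion ``$\subseteq$'', suppose $f \in \Ker R_{S}(0)$, so that $\psi|_{{\mathcal N}_{1}} = \psi|_{{\mathcal N}_{2}}$. Subtracting the two Robin identities ${\frak R}_{1}\psi = {\frak R}_{2}\psi$ and using that the two traces agree, so that the $S$-terms cancel, yields $(\nabla_{\partial_{{\mathcal N}}}\psi)|_{{\mathcal N}_{1}} = (\nabla_{\partial_{{\mathcal N}}}\psi)|_{{\mathcal N}_{2}}$. Thus $\psi$ has matching Cauchy data across ${\mathcal N}$; since $\partial_{{\mathcal N}}$ is outward for ${\mathcal N}_{1}$ and inward for ${\mathcal N}_{2}$, the boundary terms produced by the Green theorem on $M_{1}$ and $M_{2}$ cancel against one another, so $\psi$ is a distributional solution of $\Delta_{M}\psi = 0$ on all of $M$; elliptic regularity then promotes $\psi$ to a smooth section in $\Ker \Delta_{M, B}$ with $f = {\frak R}_{1}\psi = (\nabla_{\partial_{{\mathcal N}}}\psi)|_{{\mathcal N}} + S(\psi|_{{\mathcal N}})$. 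Conversely, for ``$\supseteq$'' I would start with $\psi \in \Ker \Delta_{M, B}$; its smoothness across ${\mathcal N}$ gives $\psi|_{{\mathcal N}_{1}} = \psi|_{{\mathcal N}_{2}}$ together with $(\nabla_{\partial_{{\mathcal N}}}\psi)|_{{\mathcal N}_{1}} = (\nabla_{\partial_{{\mathcal N}}}\psi)|_{{\mathcal N}_{2}}$, whence ${\frak R}_{1}\psi = {\frak R}_{2}\psi = f$ for $f := (\nabla_{\partial_{{\mathcal N}}}\psi)|_{{\mathcal N}} + S(\psi|_{{\mathcal N}})$; the invertibility of $\Delta_{M_{0}, {\frak R}_{0}, B}$ forces $\psi = {\mathcal P}_{0}(0)\iota(f)$ by the uniqueness in Definition \ref{Definition:2.2}, and therefore $R_{S}(0) f = \psi|_{{\mathcal N}_{1}} - \psi|_{{\mathcal N}_{2}} = 0$.

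For the dimensions, the linear map $\psi \mapsto (\nabla_{\partial_{{\mathcal N}}}\psi)|_{{\mathcal N}} + S(\psi|_{{\mathcal N}})$ from $\Ker \Delta_{M, B}$ to $\Ker R_{S}(0)$ is surjective by the two inclusions above and injective, since $f = 0$ forces $\psi = {\mathcal P}_{0}(0)\iota(0) \in \Dom(\Delta_{M_{0}, {\frak R}_{0}, B})$ with $\Delta_{M_{0}, {\frak R}_{0}, B}\psi = 0$, hence $\psi = 0$ by invertibility; this gives $\Dim \Ker R_{S}(0) = \Dim \Ker \Delta_{M, B}$. The identity $\Dim \Ker R_{\DN}(0) = \Dim \Ker \Delta_{M, B}$ is the classical Dirichlet analogue from \cite{BFK1}, obtained by the same gluing argument applied to the Dirichlet Cauchy data via $\psi \mapsto \psi|_{{\mathcal N}}$, so that the three quantities coincide. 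I expect the one genuinely delicate point to be this gluing/regularity step: verifying that matching value and matching normal derivative across ${\mathcal N}$ upgrade a piecewise-harmonic section to a smooth global solution of $\Delta_{M}\psi = 0$, which requires keeping careful track of the orientation of $\partial_{{\mathcal N}}$ relative to ${\mathcal N}_{1}$ and ${\mathcal N}_{2}$ in the Green theorem so that the two boundary contributions truly cancel.
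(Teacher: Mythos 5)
Your proof is correct and follows essentially the same route as the paper: both inclusions via the Poisson operator ${\mathcal P}_{0}(0)$, cancellation of the $S$-terms to match the normal derivatives, smoothness of the glued section across ${\mathcal N}$, and uniqueness of ${\mathcal P}_{0}(0)$ from the invertibility of $\Delta_{M_{0}, {\frak R}_{0}, B}$. The only difference is that you spell out two points the paper leaves implicit --- the distributional/elliptic-regularity argument for the gluing step and the injectivity of $\psi \mapsto (\nabla_{\partial_{{\mathcal N}}}\psi)|_{{\mathcal N}} + S(\psi|_{{\mathcal N}})$ in the dimension count --- and both are filled in correctly.
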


\begin{proof}
Setting $f = (\nabla_{\partial_{{\mathcal N}}} \psi )\big|_{{\mathcal N}} + S (\psi|_{{\mathcal N}})~$ for $\psi \in \Ker \Delta_{M, B}$,
then $\psi$ is equal to ${\mathcal P}_{0}(0)(\iota(f))$, which is a smooth section on $M$.
Hence $f$ belongs to $\Ker R_{S}(0)$.
Let $f \in \Ker R_{S}(0)$. Then,
\begin{eqnarray*}
\bigg( \nabla_{\partial_{{\mathcal N}}}{\mathcal P}_{0}(0)(f, f) \bigg)\big|_{{\mathcal N}_{1}} + S \bigg( {\mathcal P}_{0}(0)(f, f) \big|_{{\mathcal N}_{1}} \bigg) & = &
\bigg( \nabla_{\partial_{{\mathcal N}}}{\mathcal P}_{0}(0)(f, f) \bigg)\big|_{{\mathcal N}_{2}} + S \bigg( {\mathcal P}_{0}(0)(f, f) \big|_{{\mathcal N}_{2}} \bigg) ~ = ~ f,  \\
{\mathcal P}_{0}(0) (f, f)\big|_{{\mathcal N}_{1}} & = & {\mathcal P}_{0}(0) (f, f)\big|_{{\mathcal N}_{2}},
\end{eqnarray*}

\noindent
which shows that  $ \bigg(\nabla_{\partial_{{\mathcal N}}}{\mathcal P}_{0}(0)(f, f) \bigg)\big|_{{\mathcal N}_{1}} = \bigg( \nabla_{\partial_{{\mathcal N}}}{\mathcal P}_{0}(0)(f, f) \bigg)\big|_{{\mathcal N}_{2}}  ~$.
Hence $~ {\mathcal P}_{0}(0) (f, f) ~$ is continuous and smooth on $M$, which
shows that it belongs to $\Ker \Delta_{M, B}$.
\end{proof}

\vspace{0.2 cm}
\noindent{\it Remark} : It is interesting to compare the kernel of $R_{S}(0)$ with that of $R_{\DN}(0)$, which is
\begin{eqnarray}   \label{E:2.25}
\Ker R_{\DN}(0) & = & \{  \psi \big|_{{\mathcal N}} ~ \big| ~ \psi \in \Ker \Delta_{M, B} \}.
\end{eqnarray}

\vspace{0.2 cm}

Since $R_{S}(\lambda)^{-1}$ is a $\Psi$DO of order $1$, the zeta function and zeta-determinant of $R_{S}(\lambda)^{-1}$ are well defined.
We define the zeta function $\zeta_{R_{S}(\lambda)}(s)$ associated to $R_{S}(\lambda)$ by

\begin{eqnarray}    \label{E:2.26}
\zeta_{R_{S}(\lambda)}(s) & := & \zeta_{R_{S}(\lambda)^{-1}}(-s).
\end{eqnarray}

\noindent
Then, $\zeta_{R_{S}(\lambda)}(s)$ is analytic for $\Re s < - (m-1)$ and has an meromorphic continuation having a regular value at zero.

\begin{definition}   \label{Definition:2.9}
We define the zeta-determinant of $R_{S}(\lambda)$ by
\begin{eqnarray*}
\ln \Det R_{S}(\lambda) & := & \zeta_{R_{S}(\lambda)^{-1}}^{\prime}(0) ~ = ~ - \ln \Det R_{S}(\lambda)^{-1}.
\end{eqnarray*}
\end{definition}

\vspace{0.2 cm}

\noindent
From Lemma \ref{Lemma:2.3} and Lemma \ref{Lemma:2.4}, we get the following equalities.

\begin{eqnarray}   \label{E:2.27}
\frac{d}{d \lambda} R_{S}(\lambda) & = & \delta_{d} \cdot \gamma_{0} \cdot \frac{d}{d \lambda} {\mathcal P}_{0}(\lambda) \cdot \iota  \\
& = & \delta_{d} \cdot \gamma_{0} \cdot \left( - ~ (\Delta_{M_{0}, {\frak R}_{0}, B} + \lambda)^{-1} ~  \cdot ~ {\mathcal P}_{0}(\lambda) \right) \cdot \iota
\nonumber  \\
& = & \delta_{d} \cdot \gamma_{0} \cdot \left\{ {\mathcal P}_{0}(\lambda) \cdot \iota \cdot {\frak R}_{1} \cdot (\Delta_{M, B} + \lambda)^{-1} ~ - ~
(\Delta_{M, B} + \lambda)^{-1} \right\} ~  \cdot ~ {\mathcal P}_{0}(\lambda) \cdot \iota.   \nonumber
\end{eqnarray}

\begin{lemma}   \label{Lemma:2.10}
\begin{eqnarray*}
\delta_{d} \cdot \gamma_{0} \cdot (\Delta_{M, B} + \lambda)^{-1} ~  \cdot ~ {\mathcal P}_{0}(\lambda) \cdot \iota & = & 0.
\end{eqnarray*}
\end{lemma}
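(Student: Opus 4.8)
The plan is to evaluate the composite operator on an arbitrary section $f \in C^{\infty}({\mathcal E}|_{{\mathcal N}})$ and to exploit the crucial distinction between the uncut manifold $M$ and the cut manifold $M_{0}$. Writing $\psi := {\mathcal P}_{0}(\lambda) \cdot \iota(f) \in C^{\infty}({\mathcal E}_{0})$, and using the identification $L^{2}({\mathcal E}_{0}) \cong L^{2}({\mathcal E})$ described just before Lemma \ref{Lemma:2.4}, I would set $u := (\Delta_{M, B} + \lambda)^{-1} \psi$. The claim then reduces to showing that
\begin{eqnarray*}
\delta_{d} \cdot \gamma_{0}(u) ~ = ~ u|_{{\mathcal N}_{1}} - u|_{{\mathcal N}_{2}} ~ = ~ 0.
\end{eqnarray*}

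First I would observe that, although $\psi = {\mathcal P}_{0}(\lambda)\iota(f)$ is only a section on $M_{0}$ and may fail to be continuous across ${\mathcal N}$ (it satisfies the Robin conditions ${\frak R}_{1}$ and ${\frak R}_{2}$ on ${\mathcal N}_{1}$ and ${\mathcal N}_{2}$ separately), the operator $(\Delta_{M, B} + \lambda)^{-1}$ is the resolvent of the Laplacian on the \emph{uncut} manifold $M$, on which ${\mathcal N}$ is merely an interior hypersurface carrying no boundary condition. Consequently $u$ lies in $\Dom(\Delta_{M, B})$, and interior elliptic regularity across ${\mathcal N}$ shows that $u$ is of Sobolev class $H^{2}$ in a neighborhood of ${\mathcal N}$, hence in particular of class $H^{1}$ on all of $M$.

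The decisive step is that an $H^{1}$ section on the uncut manifold $M$ possesses a single, well-defined trace on the interior hypersurface ${\mathcal N}$; equivalently, the trace of $u$ taken from the ${\mathcal N}_{1}$-side and the trace taken from the ${\mathcal N}_{2}$-side must coincide, since a genuine $H^{1}$ function on $M$ has no jump across ${\mathcal N}$. This forces $u|_{{\mathcal N}_{1}} = u|_{{\mathcal N}_{2}}$, and therefore $\delta_{d} \cdot \gamma_{0}(u) = 0$, as desired.

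The main obstacle here is conceptual rather than computational: one must keep carefully separate the role of the uncut operator $\Delta_{M, B}$ from that of the cut operator $\Delta_{M_{0}, {\frak R}_{0}, B}$, and be precise about the identification of the two $L^{2}$ spaces. The entire content of the lemma is that applying the resolvent of the \emph{uncut} Laplacian reconciles the two one-sided boundary data into a section that is genuinely continuous across ${\mathcal N}$, which is exactly what makes the difference operator $\delta_{d}$ annihilate the output. No computation with the symbols of the pseudodifferential operators $Q_{i}(\lambda)$ is required.
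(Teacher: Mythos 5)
Your proof is correct and takes essentially the same route as the paper's: the paper likewise observes that ${\mathcal P}_{0}(\lambda)(f,f)$ belongs to $L^{2}({\mathcal E})$ under the identification $L^{2}({\mathcal E}_{0})\cong L^{2}({\mathcal E})$, that the resolvent of the uncut operator maps it into $H^{2}({\mathcal E})$, and that the (single-valued) trace on ${\mathcal N}$ is then well defined (citing Theorem 11.4 in \cite{BW}), so that $\delta_{d}\cdot\gamma_{0}$ annihilates the output. Your appeal to interior elliptic regularity and the no-jump property of $H^{1}$ sections is only a mild rephrasing of that same regularity-plus-trace argument.
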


\begin{proof}
Definition \ref{Definition:2.2} shows that for $f \in C^{\infty}({\mathcal E}|_{{\mathcal N}})$,  $~ {\mathcal P}_{0}(\lambda) (f, f)$ belongs to $L^{2}({\mathcal E})$,
which shows that $~(\Delta_{M, B} + \lambda)^{-1} \cdot {\mathcal P}_{0}(\lambda)  (f, f) \in H^{2}({\mathcal E}) ~$.
Hence, the projection onto ${\mathcal N}$ is well defined (cf. Theorem 11.4 in \cite{BW}), and
we get
\begin{eqnarray*}
\delta_{d} \cdot \gamma_{0} \cdot (\Delta_{M, B} + \lambda)^{-1} \cdot {\mathcal P}_{0}(\lambda)  (f, f) ~ = ~ 0,
\end{eqnarray*}
which completes the proof of the lemma.
\end{proof}

\noindent
The above lemma yields the following equalities.

\begin{eqnarray}   \label{E:2.28}
\frac{d}{d \lambda} R_{S}(\lambda) & = & \delta_{d} \cdot \gamma_{0} \cdot {\mathcal P}_{0}(\lambda) \cdot \iota \cdot {\frak R}_{1} \cdot (\Delta_{M, B} + \lambda)^{-1} ~
\cdot ~ {\mathcal P}_{0}(\lambda) \cdot \iota  \\
& = & R_{S}(\lambda) \cdot {\frak R}_{1} \cdot (\Delta_{M, B} + \lambda)^{-1} ~
\cdot ~ {\mathcal P}_{0}(\lambda) \cdot \iota,     \nonumber
\end{eqnarray}

\noindent
which shows that

\begin{eqnarray}   \label{E:2.29}
R_{S}(\lambda)^{-1} \cdot \frac{d}{d \lambda} R_{S}(\lambda) & = &
{\frak R}_{1} \cdot (\Delta_{M, B} + \lambda)^{-1} ~
\cdot ~ {\mathcal P}_{0}(\lambda) \cdot \iota.
\end{eqnarray}

\vspace{0.2 cm}
\noindent
We put $\nu := \big[ \frac{m-1}{2} \big] + 1$ and note the following equalities.

\begin{eqnarray}     \label{E:2.30}
& & \frac{d^{\nu}}{d \lambda^{\nu}} \left\{ \ln \Det (\Delta_{M, B} + \lambda) - \ln \Det (\Delta_{M_{0}, {\frak R}_{0}, B} + \lambda) \right\}      \\
& = & \Tr \left\{ \frac{d^{\nu-1}}{d \lambda^{\nu-1}} \left( (\Delta_{M,B} + \lambda)^{-1} -
(\Delta_{M_{0}, {\frak R}_{0},B} + \lambda)^{-1} \right) \right\}
 =  \Tr \left\{ \frac{d^{\nu-1}}{d \lambda^{\nu-1}} \left( {\mathcal P}_{0}(\lambda) \cdot \iota \cdot {\frak R}_{1} \cdot (\Delta_{M,B} + \lambda)^{-1} \right) \right\}    \nonumber \\
& = & \Tr \left\{ \frac{d^{\nu-1}}{d \lambda^{\nu-1}} \left\{
{\frak R}_{1} \cdot (\Delta_{M} + \lambda)^{-1} \cdot {\mathcal P}_{0}(\lambda) \cdot \iota \right\} \right\}
 =  \Tr \left\{ \frac{d^{\nu-1}}{d \lambda^{\nu-1}} \left(
R_{S}(\lambda)^{-1} \cdot \frac{d}{d \lambda} R_{S}(\lambda) \right) \right\}    \nonumber \\
& = & \frac{d^{\nu}}{d \lambda^{\nu}} \ln \Det R_{S}(\lambda),   \nonumber
\end{eqnarray}

\noindent
which leads to the following result.

\begin{theorem}    \label{Theorem:2.11}
Let $(M, g)$ be a compact oriented Riemannian manifold with boundary $\partial M$ and let ${\mathcal N}$ be a closed hypersurface such that ${\mathcal N} \cap \partial M = \emptyset$. We denote by $\Delta_{M, B}$ and $\Delta_{M_{0}, {\frak R}_{0}, B}$ the Laplacians on $M$ and $M_{0}$ with the boundary conditions $B$ on $\partial M$ and ${\frak R}_{i}$ on ${\mathcal N}_{i}$.
Then, for
$\lambda = r e^{i \theta}$ with $0 < r < \infty$ and $0 < \theta < \frac{\pi}{2}$,
there exists a polynomial $~ P(\lambda) = \sum_{j=0}^{\big[ \frac{m-1}{2} \big]} a_{j} \lambda^{j} ~$ such that

\begin{eqnarray*}
\ln \Det (\Delta_{M, B} + \lambda) ~ - ~ \ln \Det (\Delta_{M_{0}, {\frak R}_{0}, B} + \lambda)
 & = & \sum_{j=0}^{\big[ \frac{m-1}{2} \big]} a_{j} \lambda^{j} + \ln \Det R_{S}(\lambda).
\end{eqnarray*}
\end{theorem}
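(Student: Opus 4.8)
The plan is to read the theorem off almost directly from the identity (\ref{E:2.30}) already established. Set
\[
G(\lambda) \; := \; \ln \Det (\Delta_{M, B} + \lambda) - \ln \Det (\Delta_{M_{0}, {\frak R}_{0}, B} + \lambda) - \ln \Det R_{S}(\lambda),
\]
regarded as a function of $\lambda$ ranging over the open sector $\Sigma = \{ r e^{i\theta} \mid r > 0, \ 0 < \theta < \frac{\pi}{2} \}$. The chain of equalities in (\ref{E:2.30}) says precisely that the $\nu$-th derivative of the first two terms coincides with the $\nu$-th derivative of $\ln \Det R_{S}(\lambda)$, where $\nu = \big[ \frac{m-1}{2} \big] + 1$. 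Hence $\frac{d^{\nu}}{d \lambda^{\nu}} G(\lambda) \equiv 0$ on $\Sigma$, and the whole content of the theorem is to turn this into the stated polynomial identity.

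The second step is to establish that $G$ is holomorphic on $\Sigma$, which I would do term by term. For $\Delta_{M, B} + \lambda$ and $\Delta_{M_{0}, {\frak R}_{0}, B} + \lambda$ this is the standard analyticity of the zeta-determinant of a holomorphic family of invertible elliptic operators, the parameter staying in the resolvent set: the choice (\ref{E:2.15}) together with the prescribed branch cut keeps both operators invertible for $|\lambda| > 0$, so $\zeta'_{\bullet}(0)$ depends analytically on $\lambda$. For $\ln \Det R_{S}(\lambda)$ the analyticity follows from the fact that $R_{S}(\lambda)^{-1}$ is a holomorphic family of $\Psi$DOs of order $1$: its construction through ${\mathcal P}_{0}(\lambda)$ in (\ref{E:2.20}) involves only the resolvent $(\Delta_{M_{0}, {\frak R}_{0}, B} + \lambda)^{-1}$, which is holomorphic in $\lambda$ on $\Sigma$, so the zeta function $\zeta_{R_{S}(\lambda)^{-1}}(s)$ and its derivative at $s = 0$ inherit holomorphic dependence on $\lambda$.

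The final step is the elementary complex-analytic fact that a holomorphic function on a connected open set whose $\nu$-th derivative vanishes identically is a polynomial of degree at most $\nu - 1$. Since $\Sigma$ is connected and $\frac{d^{\nu}}{d\lambda^{\nu}} G \equiv 0$ with $\nu - 1 = \big[ \frac{m-1}{2} \big]$, we conclude $G(\lambda) = \sum_{j=0}^{[\frac{m-1}{2}]} a_{j} \lambda^{j}$ for constants $a_{j}$, and rearranging gives exactly the asserted formula with $P(\lambda) = \sum_{j} a_{j} \lambda^{j}$.

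I expect the only genuine work to lie in the second step, the holomorphic dependence of the three zeta-determinants on $\lambda$; once analyticity is in hand the rest is formal. A related technical point, already absorbed into (\ref{E:2.30}), is that $\nu$ was chosen exactly large enough that $\frac{d^{\nu-1}}{d\lambda^{\nu-1}}$ of the resolvent difference — equivalently of $R_{S}(\lambda)^{-1} \cdot \frac{d}{d\lambda} R_{S}(\lambda)$ — is of trace class, so that the traces appearing there are well defined. For a self-contained account, verifying this trace-class property, via the order count for the order-$1$ operator $R_{S}(\lambda)^{-1}$ on the $(m-1)$-dimensional manifold ${\mathcal N}$ against the asymptotic expansion (\ref{E:2.11}), would be the main estimate to supply.
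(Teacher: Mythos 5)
Your proposal is correct and follows essentially the same route as the paper: the identity (\ref{E:2.30}) is exactly the paper's proof engine, and the passage from $\frac{d^{\nu}}{d\lambda^{\nu}}G \equiv 0$ on the connected sector to the polynomial $P(\lambda)$ of degree $\big[\frac{m-1}{2}\big]$ is precisely the step the paper leaves implicit in the phrase ``which leads to the following result.'' Your added remarks on holomorphy in $\lambda$ and the trace-class count for $R_{S}(\lambda)^{-1}\cdot\frac{d}{d\lambda}R_{S}(\lambda)$ merely make explicit what the paper (following \cite{BFK1}) takes for granted.
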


\vspace{0.2 cm}

To determine the polynomial $P(\lambda)$, we use the asymptotic expansions for $|\lambda| \rightarrow \infty$.
We first note that each term in the left hand side of Theorem \ref{Theorem:2.11} has an asymptotic expansion for $|\lambda| \rightarrow \infty$ whose constant term is zero
(cf. Lemma 2.1 in \cite{KL2}, \cite{Vo}). We also note that $R_{S}(\lambda)$ is an elliptic $\Psi$DO with parameter of weight $2$. We refer to the Appendix of \cite{BFK1} or
\cite{Sh} for the definition of elliptic operators with parameter.
It is shown in the Appendix of \cite{BFK1} that
$\ln \Det R_{S}(\lambda)$ has an asymptotic expansion for $|\lambda| \rightarrow \infty$ of the following type:

\begin{eqnarray}   \label{E:2.31}
\ln \Det R_{S}(\lambda) ~ = ~ - \ln \Det \left( R_{S}(\lambda)^{-1} \right)
& \sim & \sum_{j=0}^{\infty} \pi_{j} |\lambda|^{\frac{m-1-j}{2}} + \sum_{j=0}^{m-1} q_{j} |\lambda|^{\frac{m-1-j}{2}} \ln |\lambda|,
\end{eqnarray}

\noindent
where the coefficients $\pi_{j}$ and $q_{j}$ are computed by integration of some local densities obtained from the homogeneous symbol of
the resolvent of $R_{S}(\lambda)^{-1}$.
This observation shows that $a_{0} = - \pi_{m-1}$, where $\pi_{m-1}$ is the constant term in the asymptotic expansion of
$\ln \Det R_{S}(\lambda)$.

We next discuss the coefficient $\pi_{m-1}$.
We denote the homogeneous symbols of $R_{S}(\lambda)^{-1}$ and its resolvent $\left(\mu - R_{S}(\lambda)^{-1} \right)^{-1}$ as follows (\cite{Gi1}, \cite{Sh}).

\begin{eqnarray}    \label{E:2.32}
& & \sigma \left( R_{S}(\lambda)^{-1} \right) ~ \sim ~ \sum_{j=0}^{\infty} \theta_{1-j}(x, \xi, \lambda, \mu),  \\
& & \sigma \left( \left( \mu -  R_{S}(\lambda)^{-1}  \right)^{-1} \right)(x, \xi, \lambda, \mu) ~ \sim ~ \sum_{j=0}^{\infty} r_{-1-j}(x, \xi, \lambda, \mu),   \nonumber
\end{eqnarray}
\noindent
where

\begin{eqnarray}   \label{E:2.33}
& & r_{-1}(x, \xi, \lambda, \mu) ~ = ~ \left( \mu - \theta_{1} \right)^{-1},  \\
& & r_{-1-j}(x, \xi, \lambda, \mu) ~ = ~ \left( \mu - \theta_{1} \right)^{-1}\,\, \sum_{k=0}^{j-1} \,\,\sum_{|\omega|+l+k=j} \frac{1}{\omega!}
\partial_{\xi}^{\omega} \theta_{1-l}\cdot D_{x}^{\omega} r_{-1-k}.   \nonumber
\end{eqnarray}
\noindent

\noindent
Then, $\pi_{j}$ and $q_{j}$ are computed by the following integrals:

\begin{eqnarray}   \label{E:2.34}
\pi_{j} & = & \frac{\partial}{\partial s}\bigg|_{s=0} \int_{{\mathcal N}}  \left( \frac{1}{(2 \pi)^{m-1}} \int_{T_{x}^{\ast}{\mathcal N}}
\frac{1}{2 \pi i} \int_{\gamma} \mu^{-s} \Tr r_{-1-j} \left( x, \xi, \frac{\lambda}{|\lambda|}, \mu\right) d\mu d \xi \right)  d \vol({\mathcal N}) ,        \\
q_{j} & = & - \frac{1}{2} ~ \int_{{\mathcal N}} \left( \frac{1}{(2 \pi)^{m-1}} \int_{T_{x}^{\ast}{\mathcal N}}
\frac{1}{2 \pi i} \int_{\gamma} \mu^{-s} \Tr r_{-1-j} \left(x, \xi, \frac{\lambda}{|\lambda|}, \mu\right) d\mu d \xi \right) d \vol({\mathcal N})  \bigg|_{s=0}.   \nonumber
\end{eqnarray}

\vspace{0.2 cm}
Lemma \ref{Lemma:2.7} shows that the homogeneous symbol of $R_{S}(\lambda)$ is given by

\begin{eqnarray}   \label{E:2.35}
\sigma\big( R_{S}(\lambda) \big) & = & \sigma \big( (Q_{1}(\lambda) + S)^{-1} \big) +
\sigma \big( (Q_{2}(\lambda) - S)^{-1} .
\end{eqnarray}

\noindent
On the boundary normal coordinate system in a collar neighborhood of ${\mathcal N}$ introduced below Definition \ref{Definition:2.2},
the full homogeneous symbols of $Q_{1}(\lambda)$ and $Q_{2}(\lambda)$ are computed in Section 2 of \cite{KL4} in a recursive way.
We denote the symbols of $Q_{1}(\lambda)$ and $Q_{2}(\lambda)$ by

\begin{eqnarray}   \label{E:2.36}
\sigma \big( Q_{1}(\lambda) \big)(x, \xi, \lambda) & \sim & \sum_{j=0}^{\infty} {\frak p}_{1-j}(x, \xi, \lambda) , \qquad
\sigma \big( Q_{2}(\lambda) \big)(x, \xi, \lambda) ~ \sim ~ \sum_{j=0}^{\infty} {\frak q}_{1-j}(x, \xi, \lambda).
\end{eqnarray}

\noindent
It is also well known (cf. \cite{Le1}) that ${\frak p}_{1-j}(x, \xi, \lambda)$ and ${\frak q}_{1-j}(x, \xi, \lambda)$ are
expressed by

\begin{eqnarray}    \label{E:2.37}
{\frak p}_{1-j}(x, \xi, \lambda)  =  A_{1-j}(x, \xi, \lambda) + B_{1-j}(x, \xi, \lambda), \quad
{\frak q}_{1-j}(x, \xi, \lambda)  =  A_{1-j}(x, \xi, \lambda) - B_{1-j}(x, \xi, \lambda),
\end{eqnarray}

\noindent
where $A_{1-j}(x, \xi, \lambda)$ and $B_{1-j}(x, \xi, \lambda)$ are homogeneous of degree $1-j$ with respect to $\xi$ satisfying

\begin{eqnarray}   \label{E:2.38}
A_{1-j}(x, -\xi, \lambda) = (-1)^{j} A_{1-j}(x, \xi, \lambda), \qquad
B_{1-j}(x, -\xi, \lambda) = (-1)^{j+1} B_{1-j}(x, \xi, \lambda).
\end{eqnarray}

\noindent
The homogeneous symbol of $R_{\DN}(\lambda) = Q_{1}(\lambda) + Q_{2}(\lambda) + V_{1}(\lambda) + V_{2}(\lambda)$ (see (\ref{E:2.10})) is given by

\begin{eqnarray}    \label{E:2.39}
\sigma \big( R_{\DN}(\lambda) \big) (x, \xi, \lambda)
& \sim & \sum_{j=0}^{\infty} \left( {\frak p}_{1-j} + {\frak q}_{1-j} \right)(x, \xi, \lambda)
~ = ~ 2 \sum_{j=0}^{\infty} A_{1-j}(x, \xi, \lambda).
\end{eqnarray}

\noindent
The homogeneous symbol of $Q_{1}(\lambda) + S$ and $Q_{2}(\lambda) - S$ are given by

\begin{eqnarray}   \label{E:2.40}
\sigma \big( Q_{1}(\lambda) + S \big) & \sim &
\sum_{j=1}^{\infty} \big( A_{1-j}(x, \xi, \lambda) + {\widetilde B}_{1-j}(x, \xi, \lambda) \big), \\
\sigma \big( Q_{2}(\lambda) - S \big) & \sim &
\sum_{j=1}^{\infty} \big( A_{1-j}(x, \xi, \lambda) - {\widetilde B}_{1-j}(x, \xi, \lambda) \big),    \nonumber
\end{eqnarray}

\noindent
where

\begin{eqnarray}    \label{E:2.41}
{\widetilde B}_{1-j}(x, \xi, \lambda) & = & \begin{cases} B_{1-j}(x, \xi, \lambda) & \text{for} ~~ j \neq 1 \\
B_{0}(x, \xi, \lambda) + S(x)  & \text{for} ~~ j = 1 .   \end{cases}
\end{eqnarray}

\noindent
Then, the homogeneous symbols of $\big( Q_{1}(\lambda) + S \big)^{-1}$ and $\big( Q_{2}(\lambda) - S \big)^{-1}$ are given by

\begin{eqnarray}   \label{E:2.42}
\sigma \big( ( Q_{1}(\lambda) + S )^{-1} \big) & \sim & \sum_{j=0}^{\infty} v_{-1-j} (x, \xi, \lambda),  \qquad
\sigma \big( ( Q_{2}(\lambda) - S )^{-1} \big) ~ \sim ~ \sum_{j=0}^{\infty} w_{-1-j} (x, \xi, \lambda),
\end{eqnarray}

\noindent
where

\begin{eqnarray}    \label{E:2.43}
& & v_{-1} = w_{-1} = \frac{1}{\sqrt{|\xi|^{2} + \lambda}} \Id,  \\
& & v_{-1-j} = - \frac{1}{\sqrt{|\xi|^{2} + \lambda}} \sum_{k=0}^{j-1} \sum_{|\omega|+ \ell + k = j}
\frac{1}{\omega!} \partial_{\xi}^{\omega} \big( A_{1 - \ell} + {\widetilde B}_{1-\ell} \big) \cdot D_{x}^{\omega} v_{-1-k},  \nonumber \\
& & w_{-1-j} = - \frac{1}{\sqrt{|\xi|^{2} + \lambda}} \sum_{k=0}^{j-1} \sum_{|\omega|+ \ell + k = j}
\frac{1}{\omega!} \partial_{\xi}^{\omega} \big( A_{1 - \ell} - {\widetilde B}_{1-\ell} \big) \cdot D_{x}^{\omega} w_{-1-k}.   \nonumber
\end{eqnarray}

\begin{lemma}    \label{Lemma:2.12}
$v_{-1-j} (x, \xi, \lambda)$ and $w_{-1-j} (x, \xi, \lambda)$ are expressed by

\begin{eqnarray*}
v_{-1-j} (x, \xi, \lambda) ~ = ~ {\mathcal V}_{-1-j}(x, \xi, \lambda) + {\mathcal W}_{-1-j}(x, \xi, \lambda), \quad
w_{-1-j} (x, \xi, \lambda) ~ = ~ {\mathcal V}_{-1-j}(x, \xi, \lambda) - {\mathcal W}_{-1-j}(x, \xi, \lambda),
\end{eqnarray*}
where ${\mathcal V}_{-1-j}(x, \xi, \lambda)$ and ${\mathcal W}_{-1-j}(x, \xi, \lambda)$ are homogeneous of degree $-1-j$ with respect to $\xi$ satisfying

\begin{eqnarray*}
{\mathcal V}_{-1-j}(x, -\xi, \lambda) = (-1)^{j} {\mathcal V}_{-1-j}(x, \xi, \lambda), \qquad
{\mathcal W}_{1-j}(x, -\xi, \lambda) = (-1)^{j+1} {\mathcal W}_{-1-j}(x, \xi, \lambda).
\end{eqnarray*}

\noindent
Hence, the homogeneous symbol of $R_{S}(\lambda)$ is given by

\begin{eqnarray*}
\sigma \big( R_{S}(\lambda) \big) (x, \xi, \lambda)
& \sim & 2 ~ \sum_{j=0}^{\infty} {\mathcal V}_{-1-j}(x, \xi, \lambda).
\end{eqnarray*}
\end{lemma}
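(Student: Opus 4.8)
The plan is to produce the even/odd splitting by brute force and then establish the two parity statements by a single strong induction on $j$, after which the symbol formula for $R_{S}(\lambda)$ is immediate from Lemma~\ref{Lemma:2.7}. First I would simply set
\[
{\mathcal V}_{-1-j} := \tfrac12\big(v_{-1-j}+w_{-1-j}\big), \qquad
{\mathcal W}_{-1-j} := \tfrac12\big(v_{-1-j}-w_{-1-j}\big),
\]
so that the decompositions $v_{-1-j}={\mathcal V}_{-1-j}+{\mathcal W}_{-1-j}$ and $w_{-1-j}={\mathcal V}_{-1-j}-{\mathcal W}_{-1-j}$ hold by construction. The entire content of the lemma is then the behaviour under $\xi\mapsto-\xi$, and I would extract this from the single pair of symmetry relations
\[
v_{-1-j}(x,-\xi,\lambda)=(-1)^{j}\,w_{-1-j}(x,\xi,\lambda), \qquad
w_{-1-j}(x,-\xi,\lambda)=(-1)^{j}\,v_{-1-j}(x,\xi,\lambda).
\]
Granting these, substitution into the definitions above yields at once ${\mathcal V}_{-1-j}(x,-\xi,\lambda)=(-1)^{j}{\mathcal V}_{-1-j}(x,\xi,\lambda)$ and ${\mathcal W}_{-1-j}(x,-\xi,\lambda)=(-1)^{j+1}{\mathcal W}_{-1-j}(x,\xi,\lambda)$, which are precisely the asserted parities.

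For the induction the base case $j=0$ is clear, since $v_{-1}=w_{-1}=(|\xi|^{2}+\lambda)^{-\frac12}\,\Id$ is even in $\xi$. For the inductive step I would feed the recursion (\ref{E:2.43}) into the substitution $\xi\mapsto-\xi$, using three ingredients. The first is the parity (\ref{E:2.38}) together with the remark that $\widetilde B_{1-j}$ inherits the parity of $B_{1-j}$: the only modified term is $S(x)$ at $j=1$, which is independent of $\xi$, hence even, matching $(-1)^{1+1}=+1$. The second is the elementary chain-rule fact that if $g(x,-\xi,\lambda)=(-1)^{p}g(x,\xi,\lambda)$ then $(\partial_{\xi}^{\omega}g)(x,-\xi,\lambda)=(-1)^{p+|\omega|}(\partial_{\xi}^{\omega}g)(x,\xi,\lambda)$, while $D_{x}^{\omega}$ (acting only in $x$, and commuting with $\xi\mapsto-\xi$) leaves the $\xi$-parity untouched. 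The third is the induction hypothesis applied to $v_{-1-k}$ for $k<j$.

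The key bookkeeping is that on the constraint surface $|\omega|+\ell+k=j$ the accumulated sign coming from $A_{1-\ell}$ (or $\widetilde B_{1-\ell}$) and from $v_{-1-k}$ is $(-1)^{\ell+|\omega|}\cdot(-1)^{k}=(-1)^{j}$, a constant that factors out of the whole double sum; the common prefactor $-(|\xi|^{2}+\lambda)^{-\frac12}$ is even and is unaffected. After pulling out this $(-1)^{j}$, the extra sign $(-1)^{1}$ carried by each $\widetilde B$-term converts $A_{1-\ell}+\widetilde B_{1-\ell}$ into $A_{1-\ell}-\widetilde B_{1-\ell}$, so that the recursion defining $v_{-1-j}$ is turned termwise into the recursion defining $w_{-1-j}$. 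This gives $v_{-1-j}(x,-\xi,\lambda)=(-1)^{j}w_{-1-j}(x,\xi,\lambda)$; the companion relation follows by the symmetric argument exchanging $v\leftrightarrow w$ and $\widetilde B\leftrightarrow-\widetilde B$. I expect this sign-tracking across the double sum, and the verification that $\widetilde B_{0}=B_{0}+S$ keeps the right parity, to be the only delicate points.

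Finally the symbol identity is a one-line consequence. By Lemma~\ref{Lemma:2.7},
\[
\sigma\big(R_{S}(\lambda)\big)\sim\sum_{j=0}^{\infty}\big(v_{-1-j}+w_{-1-j}\big)
=2\sum_{j=0}^{\infty}{\mathcal V}_{-1-j},
\]
the ${\mathcal W}_{-1-j}$ contributions cancelling in each sum $v_{-1-j}+w_{-1-j}$, which is exactly the claimed formula.
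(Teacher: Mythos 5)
Your proof is correct and follows essentially the same route as the paper: an induction on the recursion (\ref{E:2.43}) using the parities (\ref{E:2.38}) (together with the observation that $S(x)$ is even in $\xi$, so ${\widetilde B}_{0}$ keeps the parity of $B_{0}$), with the sign $(-1)^{|\omega|+\ell+k}=(-1)^{j}$ factored out over the constraint $|\omega|+\ell+k=j$, and the final formula read off from Lemma \ref{Lemma:2.7}. The only cosmetic difference is that you define ${\mathcal V}_{-1-j}=\tfrac12(v_{-1-j}+w_{-1-j})$, ${\mathcal W}_{-1-j}=\tfrac12(v_{-1-j}-w_{-1-j})$ up front and carry the cross-symmetry $v_{-1-j}(x,-\xi,\lambda)=(-1)^{j}w_{-1-j}(x,\xi,\lambda)$ as the inductive invariant, whereas the paper constructs ${\mathcal V}_{-1-j}$ and ${\mathcal W}_{-1-j}$ recursively inside the induction via (\ref{E:2.46}); since $v={\mathcal V}+{\mathcal W}$ and $w={\mathcal V}-{\mathcal W}$ force the half-sum/half-difference identification, the two inductions produce the same objects.
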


\begin{proof}
We use the induction. Setting

\begin{eqnarray}    \label{E:2.44}
{\mathcal V}_{-1} = \frac{1}{\sqrt{|\xi|^{2} + \lambda}} \Id, \qquad  {\mathcal W}_{-1} = 0,
\end{eqnarray}

\noindent
the statement holds for $j = 0$. We assume that the statement holds for $0 \leq j \leq j_{0}$. Then,

\begin{eqnarray}     \label{E:2.45}
v_{-1-(j_{0}+1)} & = & -\frac{1}{\sqrt{|\xi|^{2} + \lambda}} \sum_{k=0}^{j_{0}} \sum_{|\omega|+ \ell + k = j_{0} + 1}
\frac{1}{\omega!} \partial_{\xi}^{\omega} \big( A_{1 - \ell} + {\widetilde B}_{1 - \ell} \big) \cdot D_{x}^{\omega}
\big( {\mathcal V}_{-1-k} + {\mathcal W}_{-1-k} \big)  \\
& = & -\frac{1}{\sqrt{|\xi|^{2} + \lambda}} \sum_{k=0}^{j_{0}} \sum_{|\omega|+ \ell + k = j_{0} + 1}  \bigg\{
\frac{1}{\omega!} \bigg( \partial_{\xi}^{\omega} A_{1 - \ell} \cdot D_{x}^{\omega} {\mathcal V}_{-1-k} +
\partial_{\xi}^{\omega}{\widetilde B}_{1 - \ell} \cdot D_{x}^{\omega} {\mathcal W}_{-1-k} \bigg)    \nonumber  \\
& & + \frac{1}{\omega!} \bigg( \partial_{\xi}^{\omega} A_{1 - \ell} \cdot D_{x}^{\omega} {\mathcal W}_{-1-k} +
\partial_{\xi}^{\omega} {\widetilde B}_{1 - \ell} \cdot D_{x}^{\omega} {\mathcal V}_{-1-k} \bigg)  \bigg\},     \nonumber
\end{eqnarray}
\begin{eqnarray*}
w_{-1-(j_{0}+1) } & = & -\frac{1}{\sqrt{|\xi|^{2} + \lambda}} \sum_{k=0}^{j_{0}} \sum_{|\omega|+ \ell + k = j_{0} + 1}  \bigg\{
\frac{1}{\omega!} \bigg( \partial_{\xi}^{\omega} A_{1 - \ell} \cdot D_{x}^{\omega} {\mathcal V}_{-1-k} +
\partial_{\xi}^{\omega}{\widetilde B}_{1 - \ell} \cdot D_{x}^{\omega} {\mathcal W}_{-1-k} \bigg)     \nonumber \\
& & - \frac{1}{\omega!} \bigg( \partial_{\xi}^{\omega} A_{1 - \ell} \cdot D_{x}^{\omega} {\mathcal W}_{-1-k} +
\partial_{\xi}^{\omega} {\widetilde B}_{1-\ell} \cdot D_{x}^{\omega} {\mathcal V}_{-1-k} \bigg)  \bigg\}.   \nonumber
\end{eqnarray*}

\noindent
Setting

\begin{eqnarray}    \label{E:2.46}
{\mathcal V}_{-1-(j_{0}+1)}  =  -\frac{1}{\sqrt{|\xi|^{2} + \lambda}} \sum_{k=0}^{j_{0}} \sum_{|\omega|+ \ell + k = j_{0} + 1}
\frac{1}{\omega!} \bigg( \partial_{\xi}^{\omega} A_{1 - \ell} \cdot D_{x}^{\omega} {\mathcal V}_{-1-k} +
\partial_{\xi}^{\omega}{\widetilde B}_{1 - \ell} \cdot D_{x}^{\omega} {\mathcal W}_{-1-k} \bigg),   \\
{\mathcal W}_{-1-(j_{0}+1)}  = - \frac{1}{\sqrt{|\xi|^{2} + \lambda}} \sum_{k=0}^{j_{0}} \sum_{|\omega|+ \ell + k = j_{0} + 1}
\frac{1}{\omega!} \bigg( \partial_{\xi}^{\omega} A_{1 - \ell} \cdot D_{x}^{\omega} {\mathcal W}_{-1-k} +
\partial_{\xi}^{\omega} {\widetilde B}_{1 - \ell} \cdot D_{x}^{\omega} {\mathcal V}_{-1-k} \bigg),    \nonumber
\end{eqnarray}

\noindent
this completes the proof.
\end{proof}

\begin{corollary}   \label{Corollary:2.13}
The homogeneous symbols of $R_{S}(\lambda)^{-1}$ and $\big(\mu - R_{S}(\lambda)^{-1} \big)^{-1}$ satisfy the following property.

\begin{eqnarray*}
\theta_{1-j}(x, -\xi, \lambda, \mu) = (-1)^{j} \theta_{1-j}(x, \xi, \lambda, \mu), \qquad
r_{-1-j}(x, -\xi, \lambda, \mu) = (-1)^{j} r_{-1-j}(x, \xi, \lambda, \mu).
\end{eqnarray*}
\end{corollary}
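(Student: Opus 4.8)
The plan is to prove both identities by a single induction on $j$, feeding the first relation into the second. The starting point is Lemma~\ref{Lemma:2.12}, which gives $\sigma\big(R_{S}(\lambda)\big) \sim 2\sum_{j=0}^{\infty}{\mathcal V}_{-1-j}$ with ${\mathcal V}_{-1-j}(x,-\xi,\lambda) = (-1)^{j}{\mathcal V}_{-1-j}(x,\xi,\lambda)$; hence the degree $-1-\ell$ component of $\sigma(R_{S}(\lambda))$, which I abbreviate ${\frak s}_{-1-\ell} = 2{\mathcal V}_{-1-\ell}$, acquires the sign $(-1)^{\ell}$ under $\xi \mapsto -\xi$. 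The only auxiliary fact needed is elementary: if a symbol $a$ satisfies $a(x,-\xi)=(-1)^{p}a(x,\xi)$, then $(\partial_{\xi}^{\omega}a)(x,-\xi)=(-1)^{p+|\omega|}(\partial_{\xi}^{\omega}a)(x,\xi)$, whereas $D_{x}^{\omega}a$ retains the parity $(-1)^{p}$. Since all symbols here are $\operatorname{End}({\mathcal E}|_{{\mathcal N}})$-valued I will keep the order of every product fixed, but note that the parity of a product equals the product of the parities regardless of ordering, so non-commutativity plays no role.

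For the first identity I would invoke the standard recursion for the symbol of an inverse $\Psi$DO (\cite{Gi1}, \cite{Sh}): from $\sigma(R_{S}(\lambda))\,\#\,\sigma(R_{S}(\lambda)^{-1}) \sim \Id$ one gets $\theta_{1}={\frak s}_{-1}^{-1}$ and, for $j\geq 1$,
\begin{eqnarray*}
\theta_{1-j} & = & - {\frak s}_{-1}^{-1} \sum_{k=0}^{j-1} \sum_{|\omega|+\ell+k=j} \frac{1}{\omega!}\, \partial_{\xi}^{\omega}{\frak s}_{-1-\ell} \cdot D_{x}^{\omega}\theta_{1-k}.
\end{eqnarray*}
The base case $\theta_{1}={\frak s}_{-1}^{-1}=\tfrac{1}{2}\sqrt{|\xi|^{2}+\lambda}\,\Id$ is even in $\xi$, matching the rule with $j=0$. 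Assuming $\theta_{1-k}(x,-\xi,\lambda,\mu)=(-1)^{k}\theta_{1-k}(x,\xi,\lambda,\mu)$ for all $k<j$, each summand picks up, under $\xi\mapsto-\xi$, the sign $(-1)^{\ell+|\omega|}$ from $\partial_{\xi}^{\omega}{\frak s}_{-1-\ell}$ and $(-1)^{k}$ from $D_{x}^{\omega}\theta_{1-k}$, while the prefactor ${\frak s}_{-1}^{-1}$ is even; the constraint $|\omega|+\ell+k=j$ forces the total sign to be exactly $(-1)^{j}$, closing the induction and yielding $\theta_{1-j}(x,-\xi,\lambda,\mu)=(-1)^{j}\theta_{1-j}(x,\xi,\lambda,\mu)$.

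The second identity follows the identical pattern, now using the resolvent recursion (\ref{E:2.33}). Its base case $r_{-1}=(\mu-\theta_{1})^{-1}$ is even because $\theta_{1}$ is even. For the inductive step I assume the parity rule for $r_{-1-k}$ with $k<j$, together with the parity of $\theta_{1-l}$ for all $l$ just established, and read off from
\begin{eqnarray*}
r_{-1-j} & = & (\mu-\theta_{1})^{-1} \sum_{k=0}^{j-1} \sum_{|\omega|+l+k=j} \frac{1}{\omega!}\, \partial_{\xi}^{\omega}\theta_{1-l}\cdot D_{x}^{\omega} r_{-1-k}
\end{eqnarray*}
that each summand carries the sign $(-1)^{l+|\omega|}\cdot(-1)^{k}=(-1)^{j}$ under $\xi\mapsto-\xi$, while $(\mu-\theta_{1})^{-1}$ is even; this gives $r_{-1-j}(x,-\xi,\lambda,\mu)=(-1)^{j}r_{-1-j}(x,\xi,\lambda,\mu)$. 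I do not anticipate a genuine obstacle: the argument is pure parity bookkeeping, and the one point demanding care is the systematic use of the constraints $|\omega|+\ell+k=j$ and $|\omega|+l+k=j$ to ensure that the three separate sign contributions always recombine to $(-1)^{j}$.
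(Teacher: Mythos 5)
Your proof is correct and takes essentially the same route the paper intends: Corollary \ref{Corollary:2.13} is stated there without a separate argument, as an immediate consequence of Lemma \ref{Lemma:2.12} combined with the inversion recursion and the resolvent recursion (\ref{E:2.33}). Your parity induction, with the sign bookkeeping $(-1)^{\ell+|\omega|}$ from $\partial_{\xi}^{\omega}$ acting on a degree $-1-\ell$ component and $(-1)^{k}$ from the inductive hypothesis, recombining to $(-1)^{j}$ via the constraint $|\omega|+\ell+k=j$, is exactly the omitted verification.
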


\noindent
Corollary \ref{Corollary:2.14} shows that if $j$ is odd, then $r_{-1-j}(x, \xi, \lambda, \mu) $
is an odd function with respect to $\xi$. Hence, if $j$ is odd, then $\pi_{j} = q_{j} = 0$,
which yields the following result.

\begin{corollary}  \label{Corollary:2.14}
The constant $- a_{0}$ in Theorem \ref{Theorem:2.11} is the zero coefficient in the asymptotic expansion of $\ln \Det R_{S}(\lambda)$ for $|\lambda| \rightarrow \infty$.
If $\Dim M$ is even, then $a_{0} = 0$.
\end{corollary}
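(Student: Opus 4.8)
The plan is to read off both assertions from the asymptotic comparison already prepared above together with the parity recorded in Corollary~\ref{Corollary:2.13}. For the first assertion I would match the coefficients of $|\lambda|^{0}$ on the two sides of the identity in Theorem~\ref{Theorem:2.11}. By the results cited above (Lemma~2.1 in \cite{KL2} and \cite{Vo}), both $\ln\Det(\Delta_{M,B}+\lambda)$ and $\ln\Det(\Delta_{M_{0},{\frak R}_{0},B}+\lambda)$ admit asymptotic expansions for $|\lambda|\to\infty$ whose constant terms vanish, so the left-hand side has no $|\lambda|^{0}$ contribution. On the right-hand side, the polynomial $P(\lambda)=\sum_{j=0}^{\big[\frac{m-1}{2}\big]}a_{j}\lambda^{j}$ contributes to the order $|\lambda|^{0}$ only through its $j=0$ term $a_{0}$, since every summand with $j\geq 1$ is of strictly positive order in $|\lambda|$; and $\ln\Det R_{S}(\lambda)$ contributes $\pi_{m-1}$, because the exponent $\frac{m-1-j}{2}$ in (\ref{E:2.31}) vanishes precisely at $j=m-1$. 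Equating the two sides gives $a_{0}+\pi_{m-1}=0$, that is $-a_{0}=\pi_{m-1}$, which is exactly the zero coefficient in the expansion of $\ln\Det R_{S}(\lambda)$.

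For the second assertion I would use the parity from Corollary~\ref{Corollary:2.13}, namely $r_{-1-j}(x,-\xi,\lambda,\mu)=(-1)^{j}r_{-1-j}(x,\xi,\lambda,\mu)$. When $\dim M=m$ is even the relevant index $j=m-1$ is odd, so $r_{-1-(m-1)}$ is an odd function of $\xi$; since the trace is linear, $\Tr r_{-1-(m-1)}$ is likewise odd in $\xi$. In the formula (\ref{E:2.34}) for $\pi_{m-1}$ the factor $\mu^{-s}$ is independent of $\xi$ and the contour integral in $\mu$ is linear, so the integrand of the inner integral over the fiber $T_{x}^{\ast}{\mathcal N}$ is odd in $\xi$. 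As $T_{x}^{\ast}{\mathcal N}$ is symmetric under $\xi\mapsto-\xi$, this inner integral vanishes pointwise in $x$ for every $s$, hence its $s$-derivative at $s=0$ vanishes; integrating the resulting zero density over ${\mathcal N}$ yields $\pi_{m-1}=0$. Combined with $-a_{0}=\pi_{m-1}$ this gives $a_{0}=0$.

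The verification is essentially bookkeeping once Corollary~\ref{Corollary:2.13} is granted: the two points needing care are the identification of the $|\lambda|^{0}$ coefficient of $\ln\Det R_{S}(\lambda)$ with $\pi_{m-1}$ (via the exponent $\frac{m-1-j}{2}$) and the translation of ``$\dim M$ even'' into ``$j=m-1$ odd''. The genuine analytic content sits upstream, in the claim that the parity $r_{-1-j}(x,-\xi,\lambda,\mu)=(-1)^{j}r_{-1-j}(x,\xi,\lambda,\mu)$ propagates through the resolvent recursion (\ref{E:2.33}); this in turn rests on Lemma~\ref{Lemma:2.12} and the symmetry relations (\ref{E:2.38}) for $A_{1-j}$ and $B_{1-j}$. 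Granting those, the main obstacle is already resolved and the corollary follows immediately.
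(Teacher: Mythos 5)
Your proposal is correct and takes essentially the same route as the paper: the identity $-a_{0}=\pi_{m-1}$ is obtained exactly as in the text preceding Corollary \ref{Corollary:2.14}, by matching constant terms in the $|\lambda|\rightarrow\infty$ expansions on the two sides of Theorem \ref{Theorem:2.11} using the vanishing constant terms on the left and the exponent $\frac{m-1-j}{2}$ in (\ref{E:2.31}) on the right, and the vanishing of $\pi_{m-1}$ for $\Dim M$ even follows from the parity $r_{-1-j}(x,-\xi,\lambda,\mu)=(-1)^{j}r_{-1-j}(x,\xi,\lambda,\mu)$ of Corollary \ref{Corollary:2.13} applied to the odd index $j=m-1$ in the integral formula (\ref{E:2.34}). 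Your explicit fiberwise-oddness computation merely spells out the step the paper states in one line after Corollary \ref{Corollary:2.13}.
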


\noindent
{\it Remark} : The argument presented in \cite{KL3} shows that $P(\lambda) = 0$ when $\Dim M$ is even.
Moreover, $\pi_{j}$, $q_{j}$ and hence coefficients of $P(\lambda)$ are expressed by integration of some local densities consisting of curvature tensors (cf. \cite{KL4}, \cite{PS}), which will be discussed elsewhere.
\newline

\vspace{0.2 cm}
If $\Delta_{M, B}$ and $\Delta_{M_{0}, {\frak R}_{0}, B}$ are invertible operators, then $R_{S}(0)$ is also invertible by Lemma \ref{Lemma:2.8} and hence
we may simply
substitute $\lambda$ for $0$ in Theorem \ref{Theorem:2.11}, which leads to the following result.

\begin{corollary}    \label{Corollary:2.15}
Suppose that $\Delta_{M, B}$ and $\Delta_{M_{0}, {\frak R}_{0}, B}$ are invertible operators. Then,
\begin{eqnarray*}
\ln \Det \Delta_{M, B} - \ln \Det \Delta_{M_{0}, {\frak R}_{0}, B} & = & a_{0} + \ln \Det R_{S}(0),
\end{eqnarray*}
where $- a_{0}$ is the constant term in the asymptotic expansion of $\ln \Det R_{S}(\lambda)$ for $|\lambda| \rightarrow \infty$.
\end{corollary}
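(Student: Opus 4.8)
The plan is to obtain the stated identity by passing to the limit $\lambda \to 0$ in Theorem \ref{Theorem:2.11}, so that the entire task reduces to justifying that every term in that identity extends continuously to $\lambda = 0$ under the invertibility hypotheses, and then reading off the values at $\lambda = 0$. Theorem \ref{Theorem:2.11} supplies the identity $\ln \Det(\Delta_{M, B} + \lambda) - \ln \Det(\Delta_{M_{0}, {\frak R}_{0}, B} + \lambda) = \sum_{j=0}^{[\frac{m-1}{2}]} a_{j} \lambda^{j} + \ln \Det R_{S}(\lambda)$ for all $\lambda = r e^{i\theta}$ in the open sector $0 < r < \infty$, $0 < \theta < \frac{\pi}{2}$, and the two sides are equal there as functions of $\lambda$; the point is merely to let $r \downarrow 0$ along a fixed ray.

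First I would dispose of the right-hand side. The polynomial $P(\lambda) = \sum_{j=0}^{[\frac{m-1}{2}]} a_{j} \lambda^{j}$ is manifestly continuous in $\lambda$ and equals $a_{0}$ at $\lambda = 0$. For the determinant term, Lemma \ref{Lemma:2.8} gives $\Dim \Ker R_{S}(0) = \Dim \Ker \Delta_{M, B}$; since $\Delta_{M, B}$ is assumed invertible, $\Ker \Delta_{M, B} = 0$, and hence $R_{S}(0)$ is invertible. Consequently $R_{S}(\lambda)^{-1}$ is a continuous family of invertible elliptic $\Psi$DOs of order $1$ for $\lambda$ in a neighborhood of $0$ inside the closed sector, so its zeta function $\zeta_{R_{S}(\lambda)^{-1}}(s)$ and the value $\zeta_{R_{S}(\lambda)^{-1}}^{\prime}(0)$ vary continuously with $\lambda$, yielding $\lim_{\lambda \to 0} \ln \Det R_{S}(\lambda) = \ln \Det R_{S}(0)$.

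For the left-hand side, I would use that $\Delta_{M, B}$ and $\Delta_{M_{0}, {\frak R}_{0}, B}$ are invertible by hypothesis, so their spectra are bounded away from $0$; thus for $\lambda$ near $0$ the shifted operators $\Delta_{M, B} + \lambda$ and $\Delta_{M_{0}, {\frak R}_{0}, B} + \lambda$ remain invertible and no eigenvalue crosses zero in the limit. The map $\lambda \mapsto \ln \Det(\Delta_{M, B} + \lambda)$, and likewise the one for $M_{0}$, is then continuous at $\lambda = 0$ with limits $\ln \Det \Delta_{M, B}$ and $\ln \Det \Delta_{M_{0}, {\frak R}_{0}, B}$. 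Passing to the limit $\lambda = r e^{i\theta_{0}} \to 0$ with $r \downarrow 0$ along a fixed ray $0 < \theta_{0} < \frac{\pi}{2}$ in the identity of Theorem \ref{Theorem:2.11} then gives $\ln \Det \Delta_{M, B} - \ln \Det \Delta_{M_{0}, {\frak R}_{0}, B} = a_{0} + \ln \Det R_{S}(0)$, as claimed.

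The main obstacle is precisely this continuity of the zeta-determinants as $\lambda \to 0$: one must know that $\zeta^{\prime}(0)$, defined through the meromorphically continued zeta function built from the $\lambda$-dependent heat trace (or, for $R_{S}(\lambda)$, from the parameter-dependent resolvent symbol), depends continuously on $\lambda$ across $\lambda = 0$. The invertibility hypotheses are exactly what rules out the appearance of a zero eigenvalue in the limit and thereby secure this continuity; the one nontrivial point remaining is the standard fact that, for a smooth invertible family, $\zeta^{\prime}(0)$ is continuous in the parameter, which can be verified from the uniform control of the small-$t$ heat-trace asymptotics in (\ref{E:2.11}).
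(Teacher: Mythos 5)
Your proposal is correct and takes essentially the same route as the paper: the authors likewise observe that invertibility of $\Delta_{M, B}$ together with Lemma \ref{Lemma:2.8} forces $R_{S}(0)$ to be invertible, and then simply substitute $\lambda = 0$ into Theorem \ref{Theorem:2.11}. Your explicit discussion of continuity of the zeta-determinants as $\lambda \to 0$ merely spells out what the paper leaves implicit in the phrase ``we may simply substitute.''
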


\vspace{0.2 cm}
\noindent
In many cases, $\Delta_{M, B}$ may not be invertible. For example, if $M$ is a closed manifold, $\Delta_{M}$ may have finite dimensional nontrivial kernel.
In the next section we are going to discuss the asymptotic behaviors of Theorem \ref{Theorem:2.11} for $|\lambda| \rightarrow 0$ when $\Delta_{M, B}$ is not invertible in a simpler case that $M_{0}$ has at least two components.

\vspace{0.2 cm}
In the remaining part of this section
we are going to compute the constant $a_{0}$ in Theorem \ref{Theorem:2.11} when $S = \alpha \Id$ for $\alpha \in {\mathbb R}$ and $M$ has a product structure near ${\mathcal N}$ so that $\Delta_{M}$ is
$- \partial_{x_{m}}^{2} + \Delta_{{\mathcal N}}$ on a collar neighborhood of ${\mathcal N}$, where $\Delta_{{\mathcal N}}$ is a Laplacian on ${\mathcal N}$. For simplicity, we assume that ${\mathcal N}$ is connected.
In this case, it is known (\cite{Le2}, \cite{PW}) that $Q_{1}(\lambda)$ and $Q_{2}(\lambda)$ differ from $\sqrt{\Delta_{{\mathcal N}} + \lambda}$ by smoothing operators,
so that $R_{\DN}(\lambda)$ differs from $2 \sqrt{\Delta_{{\mathcal N}} + \lambda}$ by a smoothing operator.
Hence, by Lemma \ref{Lemma:2.7} we get

\begin{eqnarray}   \label{E:2.47}
R_{S}(\lambda)^{-1} & = &  \left( Q_{1}(\lambda) + \alpha \right) R_{\DN}(\lambda)^{-1} \left( Q_{2}(\lambda) - \alpha \right) + {\mathcal K}_{1}(\lambda)  \\
& = & \frac{1}{2} \left( \sqrt{\Delta_{{\mathcal N}} + \lambda} - \alpha^{2} \sqrt{\Delta_{{\mathcal N}} + \lambda}^{-1} \right) ~ + ~ {\mathcal K}_{2}(\lambda) ~ =: ~ {\mathcal D}(\lambda) + {\mathcal K}_{2}(\lambda),   \nonumber
\end{eqnarray}

\noindent
where ${\mathcal D}(\lambda) = \frac{1}{2} \left( \sqrt{\Delta_{{\mathcal N}} + \lambda} - \alpha^{2} \sqrt{\Delta_{{\mathcal N}} + \lambda}^{-1} \right)$ and ${\mathcal K}_{i}(\lambda)$'s are smoothing operators.
It is shown in the Appendix of \cite{BFK1} that the asymptotic expansion of $\ln \Det R_{S}(\lambda)^{-1}$ for $|\lambda| \rightarrow \infty$ is determined up to smoothing operators, and hence it is enough to consider
$\ln \Det {\mathcal D}(\lambda)$.
We here note that $\Delta_{{\mathcal N}} + \lambda$ is an elliptic operator with parameter $\lambda$ of weight $2$ so that
the leading symbol $\sigma_{L}(\Delta_{{\mathcal N}} + \lambda)$ is $\sigma_{L}(\Delta_{{\mathcal N}}) + \lambda$.
Simple computation shows that

\begin{eqnarray}   \label{E:2.48}
 \ln \Det {\mathcal D}(\lambda)
~ = ~  - \ln 2 ~ \zeta_{\left( \sqrt{\Delta_{{\mathcal N}} + \lambda} - \alpha^{2} \sqrt{\Delta_{{\mathcal N}} + \lambda}^{-1} \right)}(0)  +  \ln \Det \left( \sqrt{\Delta_{{\mathcal N}} + \lambda} - \alpha^{2} \sqrt{\Delta_{{\mathcal N}} + \lambda}^{-1} \right).
\end{eqnarray}

To compute $\zeta_{\left( \sqrt{\Delta_{{\mathcal N}} + \lambda} - \alpha^{2} \sqrt{\Delta_{{\mathcal N}} + \lambda}^{-1} \right)}(0)$ and
$\ln \Det \left( \sqrt{\Delta_{{\mathcal N}} + \lambda} - \alpha^{2} \sqrt{\Delta_{{\mathcal N}} + \lambda}^{-1} \right)$, we are going to use
the arguments presented in \cite{EVZ}.
Let $\{\mu_{j} \mid j \in {\mathbb N} \}$ be the spectrum of $\Delta_{{\mathcal N}}$.
Using the binomial series, it follows for $|\lambda| \gg 0$ and $\Re s \gg 0$ that

\begin{eqnarray}   \label{E:2.49}
& & \zeta_{\left( \sqrt{\Delta_{{\mathcal N}} + \lambda} - \alpha^{2} \sqrt{\Delta_{{\mathcal N}} + \lambda}^{-1} \right)}(s) \\
& = &
\sum_{j=1}^{\infty} \left( \sqrt{\mu_{j} + \lambda} - \frac{\alpha^{2}}{\sqrt{\mu_{j} + \lambda}} \right)^{-s} ~ = ~
\sum_{j=1}^{\infty} (\mu_{j} + \lambda)^{- \frac{s}{2}} \left( 1 - \frac{\alpha^{2}}{\mu_{j} + \lambda} \right)^{-s}   \nonumber  \\
& = & \sum_{j=1}^{\infty} (\mu_{j} + \lambda)^{- \frac{s}{2}}
\sum_{k=0}^{\infty} \frac{\Gamma(s+k) \alpha^{2k}}{k! \Gamma(s)} (\mu_{j} + \lambda)^{-k}
~ = ~ \zeta_{(\Delta_{{\mathcal N}} + \lambda)} \left( \frac{s}{2} \right) ~ + ~
\sum_{k=1}^{\infty} \frac{\Gamma(s+k) \alpha^{2k}}{k! \Gamma(s)} \zeta_{(\Delta_{{\mathcal N}} + \lambda)} \left(\frac{s}{2} + k \right)
\nonumber \\
& = & \zeta_{(\Delta_{{\mathcal N}} + \lambda)} \left( \frac{s}{2} \right) ~ + ~
\frac{2}{\Gamma(s+1)} \sum_{k=1}^{\infty} \frac{\Gamma(s+k) \alpha^{2k}}{k!} \cdot \frac{s}{2} \cdot \zeta_{(\Delta_{{\mathcal N}} + \lambda)} \left(\frac{s}{2} + k \right),   \nonumber
\end{eqnarray}

\noindent
which shows that

\begin{eqnarray}     \label{E:2.50}
\zeta_{\left( \sqrt{\Delta_{{\mathcal N}} + \lambda} - \alpha^{2} \sqrt{\Delta_{{\mathcal N}} + \lambda}^{-1} \right)}(0) & = &
\zeta_{(\Delta_{{\mathcal N}} + \lambda)} \left( 0 \right) ~ + ~
2 \sum_{k=1}^{[\frac{m-1}{2}]} \frac{\alpha^{2k}}{k} \cdot \Res_{s=k} \zeta_{(\Delta_{{\mathcal N}} + \lambda)} \left( s \right),
\end{eqnarray}

\noindent
which is a polynomial of $\lambda$.
We denote the small time asymptotics of the heat trace of $\Delta_{{\mathcal N}}$ by

\begin{eqnarray}   \label{E:2.51}
\Tr e^{-t \Delta_{{\mathcal N}}} & \sim & \sum_{j=0}^{\infty} {\frak a}_{j} t^{- \frac{m-1}{2} + j}.
\end{eqnarray}

\noindent
Then, the constant term ${\frak w}_{0}$ in the asymptotic expansion of
$\zeta_{\left( \sqrt{\Delta_{{\mathcal N}} + \lambda} - \alpha^{2} \sqrt{\Delta_{{\mathcal N}} + \lambda}^{-1} \right)}(0)$ for $|\lambda| \rightarrow \infty$ is

\begin{eqnarray}    \label{E:2.52}
{\frak w}_{0} & = &
\begin{cases} 0 & \text{for} \quad {m-1} \quad \text{odd}  \\
{\frak a}_{\frac{m-1}{2}} + 2 \sum_{k=1}^{\frac{m-1}{2}} \frac{1}{k !} \cdot {\frak a}_{\frac{m-1}{2} - k} \cdot \alpha^{2k}
& \text{for} \quad {m-1} \quad \text{even}.  \end{cases}
\end{eqnarray}

\noindent
Using the same method, it follows that

\begin{eqnarray}   \label{E:2.53}
\zeta_{\left( \sqrt{\Delta_{{\mathcal N}} + \lambda} + \alpha  \right)}(s) & = & \sum_{j=1}^{\infty} ( \sqrt{\mu_{j} + \lambda} + \alpha)^{-s}
~ = ~ \sum_{j=1}^{\infty} (\mu_{j} + \lambda)^{-\frac{s}{2}} \left( 1 + \frac{\alpha}{\sqrt{\mu_{j} + \lambda}} \right)^{-s}  \\
& = & \sum_{j=1}^{\infty} (\mu_{j} + \lambda)^{-\frac{s}{2}} \sum_{k=0}^{\infty} \frac{(-1)^{k} \Gamma(s+k)}{k! \Gamma(s)}
\left( \frac{\alpha}{\sqrt{\mu_{j} + \lambda}} \right)^{k}      \nonumber \\
& = & \zeta_{(\Delta_{{\mathcal N}} + \lambda)}\left(\frac{s}{2} \right) ~ + ~
\frac{1}{\Gamma(s)} \sum_{k=1}^{\infty} \frac{(-1)^{k} \alpha^{k} \Gamma(s+k)}{k!} \zeta_{(\Delta_{{\mathcal N}} + \lambda)}\left(\frac{s}{2} + \frac{k}{2} \right),      \nonumber \\
\zeta_{\left( \sqrt{\Delta_{{\mathcal N}} + \lambda} - \alpha  \right)}(s)
& = & \zeta_{(\Delta_{{\mathcal N}} + \lambda)}\left(\frac{s}{2} \right) ~ + ~
\frac{1}{\Gamma(s)} \sum_{k=1}^{\infty} \frac{\alpha^{k} \Gamma(s+k)}{k!} \zeta_{(\Delta_{{\mathcal N}} + \lambda)}\left(\frac{s}{2} + \frac{k}{2} \right),   \nonumber
\end{eqnarray}

\noindent
which shows that

\begin{eqnarray}    \label{E:2.54}
& & \zeta_{\left( \sqrt{\Delta_{{\mathcal N}} + \lambda} + \alpha  \right)}(s) +
\zeta_{\left( \sqrt{\Delta_{{\mathcal N}} + \lambda} - \alpha  \right)}(s)  \\
& = &  2 \zeta_{(\Delta_{{\mathcal N}} + \lambda)}\left(\frac{s}{2} \right) ~ + ~
\frac{2}{\Gamma(s)} \sum_{k=1}^{\infty} \frac{\alpha^{2k} \Gamma(s+2k)}{(2k)!} \zeta_{(\Delta_{{\mathcal N}} + \lambda)}\left(\frac{s}{2} + k \right) \nonumber \\
& = & 2 \zeta_{(\Delta_{{\mathcal N}} + \lambda)}\left(\frac{s}{2} \right) ~ + ~
\frac{2}{\Gamma(s + 1)} \sum_{k=1}^{\infty} \frac{\alpha^{2k}}{k} \frac{\Gamma(s+2k)}{\Gamma(2k)} \cdot \frac{s}{2} \cdot \zeta_{(\Delta_{{\mathcal N}} + \lambda)}\left(\frac{s}{2} + k \right).     \nonumber
\end{eqnarray}

\noindent
By (\ref{E:2.49}) and (\ref{E:2.54}), it follows that

\begin{eqnarray}   \label{E:2.55}
& & \zeta_{\left( \sqrt{\Delta_{{\mathcal N}} + \lambda} - \alpha^{2} \sqrt{\Delta_{{\mathcal N}} + \lambda}^{-1} \right)}(s) -
\left\{ \zeta_{\left( \sqrt{\Delta_{{\mathcal N}} + \lambda} + \alpha  \right)}(s) +
\zeta_{\left( \sqrt{\Delta_{{\mathcal N}} + \lambda} - \alpha  \right)}(s) -
\zeta_{(\Delta_{{\mathcal N}} + \lambda)}\left(\frac{s}{2} \right) \right\}  \\
& = & \frac{2}{\Gamma(s+1)} \sum_{k=1}^{\infty} \frac{\alpha^{2k}}{k}
\left\{ \frac{\Gamma(s+k)}{\Gamma(k)} - \frac{\Gamma(s+2k)}{\Gamma(2k)} \right\} \cdot \frac{s}{2} \cdot \zeta_{(\Delta_{{\mathcal N}} + \lambda)} \left(\frac{s}{2} + k \right).   \nonumber
\end{eqnarray}

\noindent
Hence, we get

\begin{eqnarray}    \label{E:2.56}
& & \ln \Det \left( \sqrt{\Delta_{{\mathcal N}} + \lambda} - \alpha^{2} \sqrt{\Delta_{{\mathcal N}} + \lambda}^{-1} \right)
~ = ~  \ln \Det (\sqrt{\Delta_{{\mathcal N}} + \lambda} + \alpha )   \\
 & & + \ln \Det (\sqrt{\Delta_{{\mathcal N}} + \lambda} - \alpha )
 - \frac{1}{2} \ln \Det (\Delta_{{\mathcal N}} + \lambda) ~ + ~ 2 \sum_{k=1}^{[\frac{m-1}{2}]} \frac{\alpha^{2k}}{k}
\left( \sum_{p=1}^{2k-1} \frac{1}{p} - \sum_{p=1}^{k-1} \frac{1}{p} \right) \cdot
\Res_{s=k} \zeta_{(\Delta_{{\mathcal N}} + \lambda)}(s),   \nonumber
\end{eqnarray}

\noindent
where the constant term in the asymptotic expansions of $\frac{1}{2} \ln \Det (\Delta_{{\mathcal N}} + \lambda)$
is $0$ and that of $\Res_{s=k} \zeta_{(\Delta_{{\mathcal N}} + \lambda)}(s)$ is  $\frac{1}{(k-1)!} {\frak a}_{\frac{m-1}{2} - k}$ for $m$ odd and $0$ for $m$ even.

We next compute the constant terms in the asymptotic expansions of $\ln \Det (\sqrt{\Delta_{{\mathcal N}} + \lambda} ~ \pm  \alpha )$ for
$|\lambda| \rightarrow \infty$.
For two functions $f(s, \lambda)$ and $g(s, \lambda)$, we define an equivalence relation "$\approx$" as follows.

\begin{eqnarray*}
f(s, \lambda) ~ \approx ~ g(x, \lambda) \quad \text{if and only if} \quad
\lim_{|\lambda| \rightarrow \infty} \frac{\partial}{\partial s}|_{s=0} \bigg( f(s, \lambda) - g(s, \lambda) \bigg) = 0.
\end{eqnarray*}

\noindent
We note that

\begin{eqnarray}     \label{E:2.57}
& & \zeta_{\left( \sqrt{\Delta_{{\mathcal N}} + \lambda} ~ + ~ \alpha \right)}(s) ~ = ~
\frac{1}{\Gamma(s)} \int_{0}^{\infty} t^{s-1} \Tr e^{- t \left( \sqrt{\Delta_{{\mathcal N}} + \lambda} ~ + ~ \alpha \right)} ~ dt \\
& \approx & \frac{1}{\Gamma(s)} \int_{0}^{1} t^{s-1} e^{- \alpha t} \Tr e^{- t \sqrt{\Delta_{{\mathcal N}} + \lambda}} ~ dt
~ \approx ~ \sum_{k=0}^{m-1} \frac{(- \alpha)^{k}}{k !} \frac{1}{\Gamma(s)} \int_{0}^{1} t^{s+k-1} \Tr e^{- t \sqrt{\Delta_{{\mathcal N}} + \lambda}} ~ dt
  \nonumber \\
& \approx & \sum_{k=0}^{m-1} \frac{(- \alpha)^{k}}{k !} \frac{\Gamma(s+k)}{\Gamma(s)} \frac{1}{\Gamma(\frac{s+k}{2})}
\int_{0}^{1} t^{\frac{s+k}{2}-1} \Tr e^{-t(\Delta_{{\mathcal N}} + \lambda)} ~ dt    \nonumber \\
& \approx & \sum_{k=0}^{m-1} \frac{(- \alpha)^{k}}{k !} \frac{\Gamma(s+k)}{\Gamma(s)} \frac{1}{\Gamma(\frac{s+k}{2})}
\int_{0}^{1} t^{\frac{s+k}{2}-1} e^{- t \lambda} \sum_{j=0}^{[\frac{m-1}{2}]} {\frak a}_{j} t^{-\frac{m-1}{2} + j} ~ dt  \nonumber \\
& \approx & \sum_{k=0}^{m-1} \sum_{j=0}^{[\frac{m-1}{2}]} {\frak a}_{j} \frac{(- \alpha)^{k}}{k !} \frac{\Gamma(s+k)}{\Gamma(s)} \frac{1}{\Gamma(\frac{s+k}{2})}
\int_{0}^{\infty} t^{\frac{s+k + 2j - m+1}{2}-1} e^{- t \lambda} ~ dt     \nonumber \\
& = & \sum_{k=0}^{m-1} \sum_{j=0}^{[\frac{m-1}{2}]} {\frak a}_{j} \frac{(- \alpha)^{k}}{k !} \frac{\Gamma(s+k)}{\Gamma(s)}
\frac{\Gamma(\frac{s+k + 2j - m+1}{2})}{\Gamma(\frac{s+k}{2})} \lambda^{-\frac{s+k + 2j - m+1}{2}}.  \nonumber
\end{eqnarray}

\noindent
Hence, the constant term ${\frak s}_{\alpha}$ in the asymptotic expansion of $\ln \Det \big( \sqrt{\Delta_{{\mathcal N}} + \lambda} + \alpha \big)$ is given by

\begin{eqnarray}   \label{E:2.58}
{\frak s}_{\alpha} & = & - \sum_{\stackrel{k + 2j = m-1}{k \geq 1, j \geq 0}}
{\frak a}_{j} \cdot \frac{(-1)^{k} \alpha^{k}}{k!} \frac{d}{ds}\big|_{s=0}
\left( \frac{\Gamma(s+k)}{\Gamma(s)} \frac{\Gamma(\frac{s}{2})}{\Gamma(\frac{s+k}{2})} \right)    \\
& = & - \sum_{\stackrel{k + 2j = m-1}{k \geq 1, j \geq 0}} \frac{(-1)^{k} {\frak a}_{j} \cdot \alpha^{k}}{k} \cdot \frac{1}{\Gamma(\frac{k}{2})}
\left( 2 \sum_{p=1}^{k-1} \frac{1}{p} - {\frak c}_{k} \right),  \nonumber
\end{eqnarray}

\noindent
where

\begin{eqnarray}    \label{E:2.59}
{\frak c}_{k} & = & \begin{cases} \sum_{p=1}^{\frac{k}{2}-1} \frac{1}{p} & \text{for} \quad k \quad \text{even}  \\
-2 \ln 2 + 2 \sum_{p=1}^{[\frac{k}{2}]} \frac{1}{2p - 1}  & \text{for} \quad k \quad \text{odd} . \end{cases}
\end{eqnarray}

\noindent
Here $\sum_{p=1}^{\frac{k}{2}-1} \frac{1}{p}$ or $\sum_{p=1}^{[\frac{k}{2}]} \frac{1}{2p - 1}$ is understood to be zero when $\frac{k}{2} = 1$ or $[\frac{k}{2}]=0$.
Similarly,

\begin{eqnarray}   \label{E:2.60}
{\frak s}_{- \alpha} & = & - \sum_{\stackrel{k + 2j = m-1}{k \geq 1, j \geq 0}}
\frac{{\frak a}_{j} \cdot \alpha^{k}}{k} \cdot \frac{1}{\Gamma(\frac{k}{2})}
\left( 2 \sum_{p=1}^{k-1} \frac{1}{p} - {\frak c}_{k} \right),
\end{eqnarray}

\noindent
which shows that

\begin{eqnarray}   \label{E:2.61}
{\frak s}_{\alpha} + {\frak s}_{- \alpha}  =  \begin{cases} 0 & \text{for} \quad m-1 \quad \text{odd} \\
- \sum_{k=1}^{\frac{m-1}{2}} \frac{1}{k !} \cdot {\frak a}_{(\frac{m-1}{2} - k)} \cdot \alpha^{2k} \cdot
\left( 2 \sum_{p=1}^{2k-1} \frac{1}{p} - \sum_{p=1}^{k-1} \frac{1}{p} \right) &  \text{for} \quad  m-1 \quad \text{even}.  \end{cases}
\end{eqnarray}

\noindent
By (\ref{E:2.56}) and (\ref{E:2.61}), the constant term ${\frak w}_{1}$ for
$\ln \Det \left( \sqrt{\Delta_{{\mathcal N}} + \lambda} - \alpha^{2} \sqrt{\Delta_{{\mathcal N}} + \lambda}^{-1} \right)$ for $|\lambda| \rightarrow \infty$ is

\begin{eqnarray}   \label{E:2.62}
{\frak w}_{1} & = & \begin{cases} 0 & \text{for} \quad m-1 \quad \text{odd}  \\
- \sum_{k=2}^{\frac{m-1}{2}} \frac{1}{k!} \cdot {\frak a}_{(\frac{m-1}{2} - k)} \cdot \alpha^{2k} \cdot \sum_{p=1}^{k-1} \frac{1}{p}
& \text{for} \quad m-1 \quad \text{even}.  \end{cases}
\end{eqnarray}

\noindent
If ${\mathcal N}$ consists of $k_{0}$ components, {\it i.e.} ${\mathcal N} = \cup_{i=1}^{k_{0}} {\mathcal N}_{i}$, then
$\sqrt{\Delta_{{\mathcal N}} + \lambda}$ is given by

\begin{eqnarray}     \label{E:2.77}
\sqrt{\Delta_{{\mathcal N}} + \lambda} & = & \left( \begin{array}{clcr} \sqrt{\Delta_{{\mathcal N}_{1}} + \lambda} & 0 & 0 \\ 0 & \ddots & 0 \\
0 & 0 & \sqrt{\Delta_{{\mathcal N}_{k_{0}}} + \lambda} \end{array} \right) : \oplus_{i=1}^{k_{0}} C^{\infty}({\mathcal E}|_{{\mathcal N}_{i}}) \rightarrow \oplus_{i=1}^{k_{0}} C^{\infty}({\mathcal E}|_{{\mathcal N}_{i}}).
\end{eqnarray}

\noindent
For $\alpha_{i} \in {\mathbb R}$, $1 \leq i \leq k_{0}$, we choose

\begin{eqnarray}     \label{E:2.78}
S & = & \left( \begin{array}{clcr} \alpha_{1} & 0 & 0 \\ 0 & \ddots & 0 \\ 0 & 0 & \alpha_{k_{0}} \end{array} \right).
\end{eqnarray}

\noindent
The following result is obtained by (\ref{E:2.48}), (\ref{E:2.52}) and (\ref{E:2.62}).

\begin{theorem}   \label{Theorem:2.16}
Suppose that ${\mathcal N} = \cup_{i=1}^{k_{0}} {\mathcal N}_{i}$ has $k_{0}$ components and $M$ has a product structure near ${\mathcal N}$ so that
$\sqrt{\Delta_{N} + \lambda}$ is given by
(\ref{E:2.77}) and $S$ is defined by (\ref{E:2.78}). Then, the constant $a_{0}$ in Theorem \ref{Theorem:2.11} is given by
\begin{eqnarray*}
a_{0} ~ = ~ \sum_{i=1}^{k_{0}} a_{0}^{i}, \quad \text{where} \quad
a^{i}_{0} ~ = ~ \begin{cases} 0 & \text{for} \quad m-1 \quad \text{odd}  \\
- \ln 2 \cdot
\left\{ {\frak a}^{i}_{\frac{m-1}{2}} + 2 \sum_{k=1}^{\frac{m-1}{2}} \frac{1}{k !} \cdot {\frak a}^{i}_{\left(\frac{m-1}{2} - k \right)} \cdot \alpha_{i}^{2k} \right\}  \\
\hspace{1.0 cm}
- \sum_{k=2}^{\frac{m-1}{2}} \frac{1}{k!} \cdot {\frak a}^{i}_{(\frac{m-1}{2} - k)} \cdot \alpha_{i}^{2k} \cdot \sum_{p=1}^{k-1} \frac{1}{p}
 & \text{for} \quad m-1 \quad \text{even}.
\end{cases}
\end{eqnarray*}
\end{theorem}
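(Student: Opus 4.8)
The plan is to recognize that Theorem \ref{Theorem:2.16} is simply the explicit evaluation of the constant $a_{0}$ of Theorem \ref{Theorem:2.11} under the product and diagonality hypotheses, and that essentially all of the analytic work has already been carried out in (\ref{E:2.47})--(\ref{E:2.62}); what remains is to assemble those pieces with the correct signs and to pass to the multi-component case. First I would reduce the problem to a single constant term. By Corollary \ref{Corollary:2.14} we have $a_{0} = - \pi_{m-1}$, where $\pi_{m-1}$ is the zero coefficient in the asymptotic expansion of $\ln \Det R_{S}(\lambda)$ as $|\lambda| \to \infty$. Since $\ln \Det R_{S}(\lambda) = - \ln \Det R_{S}(\lambda)^{-1}$ by Definition \ref{Definition:2.9}, the constant $a_{0}$ equals the zero coefficient in the asymptotic expansion of $\ln \Det R_{S}(\lambda)^{-1}$. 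By (\ref{E:2.47}) we may write $R_{S}(\lambda)^{-1} = {\mathcal D}(\lambda) + {\mathcal K}_{2}(\lambda)$ with ${\mathcal K}_{2}(\lambda)$ smoothing; invoking the result of the Appendix of \cite{BFK1} that the asymptotic expansion of $\ln \Det$ is unchanged modulo smoothing perturbations, it suffices to extract the constant term of $\ln \Det {\mathcal D}(\lambda)$.

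Next I would split $\ln \Det {\mathcal D}(\lambda)$ by means of (\ref{E:2.48}) into the two pieces $- \ln 2 \cdot \zeta_{\left( \sqrt{\Delta_{{\mathcal N}} + \lambda} - \alpha^{2} \sqrt{\Delta_{{\mathcal N}} + \lambda}^{-1} \right)}(0)$ and $\ln \Det \left( \sqrt{\Delta_{{\mathcal N}} + \lambda} - \alpha^{2} \sqrt{\Delta_{{\mathcal N}} + \lambda}^{-1} \right)$, and read off the two constant terms separately. The constant term of the first piece is $- \ln 2 \cdot {\frak w}_{0}$, with ${\frak w}_{0}$ given by (\ref{E:2.52}), and that of the second is ${\frak w}_{1}$, given by (\ref{E:2.62}). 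Hence $a_{0} = - \ln 2 \cdot {\frak w}_{0} + {\frak w}_{1}$; substituting the explicit expressions for ${\frak w}_{0}$ and ${\frak w}_{1}$ immediately yields the stated single-component formula, and the vanishing for $m-1$ odd is inherited from the vanishing of both ${\frak w}_{0}$ and ${\frak w}_{1}$ in that case, consistent with Corollary \ref{Corollary:2.14}.

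Finally I would treat the case ${\mathcal N} = \cup_{i=1}^{k_{0}} {\mathcal N}_{i}$. By (\ref{E:2.77}) the operator $\sqrt{\Delta_{{\mathcal N}} + \lambda}$ is block diagonal and by (\ref{E:2.78}) the endomorphism $S$ is the diagonal $\oplus_{i} \alpha_{i} \Id$, so ${\mathcal D}(\lambda)$ decomposes as a direct sum over the components ${\mathcal N}_{i}$. Consequently every zeta function and log-determinant appearing above splits as a sum over $i$, with the heat coefficients ${\frak a}_{j}$ replaced by the coefficients ${\frak a}_{j}^{i}$ of $\Tr e^{-t \Delta_{{\mathcal N}_{i}}}$ and $\alpha$ replaced by $\alpha_{i}$, giving $a_{0} = \sum_{i=1}^{k_{0}} a_{0}^{i}$ with each $a_{0}^{i}$ the single-component expression. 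The only points requiring genuine care are the sign bookkeeping in passing between $R_{S}(\lambda)$ and $R_{S}(\lambda)^{-1}$ and the justification that the smoothing remainder ${\mathcal K}_{2}(\lambda)$ does not contribute to the constant term; both are routine given the cited results, so I do not expect any substantial obstacle beyond careful accounting.
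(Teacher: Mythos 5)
Your proposal is correct and takes essentially the same route as the paper, which obtains Theorem \ref{Theorem:2.16} precisely by reducing to $\ln \Det {\mathcal D}(\lambda)$ modulo smoothing operators via (\ref{E:2.47}) and the Appendix of \cite{BFK1}, and then combining (\ref{E:2.48}), (\ref{E:2.52}) and (\ref{E:2.62}). Your sign bookkeeping (that $a_{0}$ equals the constant term of $\ln \Det R_{S}(\lambda)^{-1}$, hence $a_{0} = -\ln 2 \cdot {\frak w}_{0} + {\frak w}_{1}$) and the block-diagonal decomposition over the components ${\mathcal N}_{i}$ agree with the paper's argument.
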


\vspace{0.2 cm}

\noindent
{\it Remark} : If $\alpha_{i} = 0$ for all $i$, ${\frak R}_{1}$ and ${\frak R}_{2}$ give the Neumann boundary condition.
The above theorem shows that $a_{0}$ is the same as in the case of the Dirichlet boundary condition,
which is zero for $m-1$ odd and $a_{0}  =  - \sum_{i=1}^{k_{0}} \ln 2 \cdot \left( \zeta_{\Delta_{{\mathcal N}_{i}}}(0) + \Dim \Ker \Delta_{{\mathcal N}_{i}} \right)$
for $m-1$ even (\cite{KL1}, \cite{Le2}, \cite{PW}).

\vspace{0.3 cm}

\section{The gluing formula when $M_{0}$ is not connected}

In this section we assume that $M_{0}$ has at least two components $M_{1}$ and $M_{2}$ so that $M_{0} = M_{1} \cup M_{2}$. In this case, $R_{S}(\lambda)$ is given by Lemma \ref{Lemma:2.7} and $\ln \Det ( \Delta_{M_{0}, {\frak R}_{0}, B} + \lambda)$ is described by

\begin{eqnarray}     \label{3.1}
\ln \Det ( \Delta_{M_{0}, {\frak R}_{0}, B} + \lambda) ~ = ~ \ln \Det ( \Delta_{M_{1}, {\frak R}_{1}, B} + \lambda) + \ln \Det ( \Delta_{M_{2}, {\frak R}_{2}, B} + \lambda).
\end{eqnarray}

\noindent
We let $\Dim \Ker \Delta_{M, B} = \ell_{0}$ and assume that both $\Delta_{M_{1}, {\frak R}_{1}, B}$
and $\Delta_{M_{2}, {\frak R}_{2}, B}$ are invertible operators.
For $\lambda \rightarrow 0$ and $i = 1, 2$, we have

\begin{eqnarray}   \label{E:3.2}
& & \ln \Det ( \Delta_{M, B} + \lambda) ~ = ~ \ln \lambda^{\ell_{0}} + \ln \Det^{\ast} \Delta_{M, B} + O(\lambda),    \\
& & \ln \Det ( \Delta_{M_{i}, {\frak R}_{i}, B} + \lambda) ~ = ~ \ln \Det \Delta_{M_{i}, {\frak R}_{i}, B} + O(\lambda).   \nonumber
\end{eqnarray}

\noindent
It follows from (\ref{E:2.7}) that

\begin{eqnarray}     \label{E:3.3}
\Ker ( Q_{1}(0) + S ) ~ = ~ \{ \phi|_{{\mathcal N}_{1}} \mid \phi \in \Ker \Delta_{M_{1}, {\frak R}_{1}, B} \}, \quad
\Ker ( Q_{2}(0) - S ) ~ = ~ \{ \psi|_{{\mathcal N}_{2}} \mid \psi \in \Ker \Delta_{M_{2}, {\frak R}_{2}, B} \},
\end{eqnarray}

\noindent
which shows that
$\Delta_{M_{1}, {\frak R}_{1}, B}$ and $\Delta_{M_{2}, {\frak R}_{2}, B}$ are invertible if and only if
$Q_{1}(0) + S$ and $Q_{2}(0) - S$ are invertible.
In this section we assume that both $Q_{1}(0) + S$ and $Q_{2}(0) - S$ are invertible operators.

From the invertibility of $ Q_{1}(0) + S$ and $ Q_{2}(0) - S$, there exists $\epsilon > 0$ such that both
$ Q_{1}(\lambda) + S$ and $ Q_{2}(\lambda) - S$ are invertible for $|\lambda| < \epsilon$ so that $R_{S}(\lambda)$ is well defined for $|\lambda| < \epsilon$.
Moreover, $R_{S}(\lambda)$ is self-adjoint  for $\lambda \in {\mathbb R}$, which implies that
$R_{S}(\lambda)$ is a self-adjoint holomorphic family of type (A) in the sense of T. Kato, whose definition one can find in p.375 and
p.386 of \cite{Ka}.
Similarly, $R_{\DN}(\lambda)$ is also a self-adjoint holomorphic family of type (A).
Theorem 3.9 in p.392 of \cite{Ka} tells that the eigenvalues and corresponding eigensections of $R_{S}(\lambda)$ and $R_{\DN}(\lambda)$ are holomorphic.
More precisely, there exist holomorphic families $\{ \kappa_{j}(\lambda) \mid j = 1, 2, \cdots \}$ and $\{ \varphi_{j}(\lambda) \mid j = 1, 2, \cdots \}$ of
eigenvalues and eigensections of $R_{S}(\lambda)$ such that

\begin{eqnarray*}
\parallel \varphi_{j}(\lambda) \parallel = 1, \qquad \lim_{\lambda \rightarrow 0} \kappa_{j}(\lambda) = 0 \quad \text{for} \quad 1 \leq j \leq \ell_{0},
\end{eqnarray*}

\noindent
 and
 $\{ \varphi_{1}(0), \cdots, \varphi_{\ell_{0}}(0) \}$ is an orthonormal basis of $\Ker R_{S}(0)$ by Lemma \ref{Lemma:2.8}.
We get

\begin{eqnarray}   \label{E:3.4}
\ln \Det R_{S}(\lambda) & = & \ln \kappa_{1}(\lambda) \cdots \kappa_{\ell_{0}}(\lambda) +
\ln \Det^{\ast} R_{S}(0) + O(\lambda) \qquad (\Mod ~ 2\pi i).
\end{eqnarray}

\noindent
Similarly, there exist holomorphic families $\{ \tau_{j}(\lambda) \mid j = 1, 2, \cdots \}$ and $\{ \omega_{j}(\lambda) \mid j = 1, 2, \cdots \}$ of eigenvalues and eigensections of $R_{\DN}(\lambda)$ such that

\begin{eqnarray*}
\parallel \omega_{j}(\lambda)  \parallel = 1, \qquad \lim_{\lambda \rightarrow 0} \tau_{j}(\lambda) = 0 \quad \text{for} \quad 1 \leq j \leq \ell_{0},
\end{eqnarray*}

\noindent
 and
 $\{ \omega_{1}(0), \cdots, \omega_{\ell_{0}}(0) \}$ is an orthonormal basis of $\Ker R_{\DN}(0)$. Then,

\begin{eqnarray*}
\{ (Q_{1}(0) + S) \omega_{1}(0), \cdots, (Q_{1}(0) + S) \omega_{\ell_{0}}(0) \}
\end{eqnarray*}

\noindent
is a basis for $\Ker R_{S}(0)$, which shows that
 $(Q_{1}(0) + S)$ maps $\Ker R_{\DN}(0)$ onto $\Ker R_{S}(0)$.
We express $(Q_{1}(0) + S)^{-1}|_{\Ker R_{S}(0)}$ by an $\ell_{0} \times \ell_{0}$ matrix with respect to the bases
$\{ \varphi_{1}(0), \cdots, \varphi_{\ell_{0}}(0) \}$ and $\{ \omega_{1}(0), \cdots, \omega_{\ell_{0}}(0) \}$ as follows.
We put

\begin{eqnarray}   \label{E:3.5}
(Q_{1}(0) + S)^{-1} \varphi_{i}(0) & = & \sum_{k=1}^{\ell_{0}} a_{ik}(0) \omega_{k}(0), \qquad
{\mathcal A} ~ = ~ \left( a_{ik}(0) \right)_{1 \leq i, k \leq \ell_{0}}.
\end{eqnarray}

\noindent
In other words, $\varphi_{i}(0)  =  \sum_{k=1}^{\ell_{0}} a_{ik}(0) (Q_{1}(0) + S) \omega_{k}(0)$.
For each $i, k$, we extend $a_{ik}(0)$ to a smooth function $a_{ik}(\lambda)$ such that

\begin{eqnarray*}
\varphi_{i}(\lambda) & = & \sum_{k=1}^{\ell_{0}} a_{ik}(\lambda) (Q_{1}(\lambda) + S) \omega_{k}(\lambda) + \phi_{i}(\lambda),
\end{eqnarray*}

\noindent
where $\phi_{i}(\lambda) \in \bigg[ \Span \big\{ (Q_{1}(\lambda) + S) \omega_{1}(\lambda), \cdots, (Q_{1}(\lambda) + S) \omega_{\ell_{0}}(\lambda) \big\} \bigg]^{\perp}$
with $\phi_{i}(0) = 0$.
Then, we have

\begin{eqnarray}  \label{E:3.6}
 R_{S}(\lambda) \varphi_{i}(\lambda)
& = & ~ \sum_{k=1}^{\ell_{0}} ~ a_{ik}(\lambda) ~ \tau_{k}(\lambda) ~ \left( Q_{2}(\lambda) - S \right)^{-1} \omega_{k}(\lambda) + R_{S}(\lambda) \phi_{i}(\lambda).
\end{eqnarray}

\noindent
Since $\kappa_{j}(\lambda)$ and $\tau_{j}(\lambda)$ are also holomorphic with $\kappa_{j}(0) = \tau_{j}(0) = 0$ for $1 \leq j \leq \ell_{0}$,
it follows that $\lim_{\lambda \rightarrow 0} \frac{\kappa_{j}(\lambda)}{\lambda} = \kappa_{j}^{\prime}(0)$ and
$\lim_{\lambda \rightarrow 0} \frac{\tau_{j}(\lambda)}{\lambda} = \tau_{j}^{\prime}(0)$.
We also note that

\begin{eqnarray*}
\lim_{\lambda \rightarrow 0} \frac{1}{\lambda} \langle R_{S}(\lambda) \phi_{i}(\lambda), \varphi_{j}(0) \rangle & = &
\lim_{\lambda \rightarrow 0} \bigg\langle \phi_{i}(\lambda), \frac{R_{S}({\overline \lambda}) - R_{S}(0)}{{\overline \lambda}} \varphi_{j}(0) \bigg\rangle   \\
& = & \lim_{\lambda \rightarrow 0} \bigg\langle \phi_{i}(\lambda), R_{S}^{\prime}(0) \varphi_{j}(0) \bigg\rangle  ~ = ~ 0,
\end{eqnarray*}

\noindent
where

\begin{eqnarray*}
R_{S}^{\prime}(0) & = & \frac{d}{d \lambda}\big|_{\lambda=0} R_{S}(\lambda)
~ = ~ - \sum_{j=1}^{2} \big(Q_{j}(0) + (-1)^{j+1} S \big)^{-1} Q_{j}^{\prime}(0) \big(Q_{j}(0) + (-1)^{j+1} S \big)^{-1}.
\end{eqnarray*}


\noindent
We use this fact to have

\begin{eqnarray}   \label{E:3.7}
& & \lim_{\lambda \rightarrow 0} \frac{\kappa_{i}(\lambda)}{\lambda} \delta_{ij} ~ = ~
\lim_{\lambda \rightarrow 0} \frac{1}{\lambda} \langle R_{S}(\lambda) \varphi_{i}(\lambda), ~ \varphi_{j}(0) \rangle  \\
& = &  \lim_{\lambda \rightarrow 0} \sum_{k,s=1}^{\ell_{0}} ~ \frac{\tau_{k}(\lambda)}{\lambda} ~
a_{ik}(\lambda) \overline{a_{js}(0)}
~ \big\langle \left( Q_{2}(\lambda) - S \right)^{-1} \omega_{k}(\lambda), ~ (Q_{1}(0) + S) \omega_{s}(0)\big\rangle.
\nonumber
\end{eqnarray}

\noindent
For an orthonormal basis $\{ \psi_{1}, \cdots, \psi_{\ell_{0}} \}$ for $\Ker \Delta_{M, B}$,
we define $\ell_{0} \times \ell_{0}$ matrices ${\mathcal B}$, ${\mathcal C}$ by

\begin{eqnarray}   \label{E:3.8}
{\mathcal B} = \left( b_{ij} \right)_{1 \leq i, j \leq \ell_{0}}, \quad b_{ij} ~ = ~ \langle \psi_{i}|_{{\mathcal N}}, ~ \omega_{j}(0) \rangle_{{\mathcal N}}, \qquad
{\mathcal C} = \left( c_{ij} \right)_{1 \leq i, j \leq \ell_{0}}, \quad c_{ij} ~ = ~ \langle \psi_{i}|_{{\mathcal N}}, ~ \psi_{j}|_{{\mathcal N}} \rangle_{{\mathcal N}}.
\end{eqnarray}

\noindent
Since $\{ \psi_{1}|_{{\mathcal N}}, \cdots, \psi_{\ell_{0}}|_{{\mathcal N}}\}$ is a basis for $\Ker R_{\DN}(0)$ by (\ref{E:2.25}), it is straightforward that ${\mathcal C} = {\mathcal B} \overline{{\mathcal B}^{T}}$.
It is known (\cite{Le1}, \cite{MM}) that

\begin{eqnarray}   \label{E:3.9}
& & \lim_{\lambda \rightarrow 0} \frac{\lambda}{\tau_{j}(\lambda)} ~ = ~ (\overline{{\mathcal B}^{T}}{\mathcal B})_{jj} ~ \neq ~ 0,
\qquad  (\overline{{\mathcal B}^{T}}{\mathcal B})_{jk} ~ = ~ 0 \qquad \text{for} \quad j \neq k , \\
& & \lim_{\lambda \rightarrow 0} \Pi_{j=1}^{\ell_{0}} \frac{\lambda}{\tau_{j}(\lambda)} ~ = ~
\Pi_{j=1}^{\ell_{0}} (\overline{{\mathcal B}^{T}}{\mathcal B})_{jj} ~ = ~ \ddet \overline{{\mathcal B}^{T}}{\mathcal B}
 ~ = ~ \ddet {\mathcal C},  \nonumber
\end{eqnarray}

\noindent
which leads to

\begin{eqnarray*}
\lim_{\lambda \rightarrow 0} \frac{\kappa_{i}(\lambda)}{\lambda} \delta_{ij} & = &
 \sum_{k, s =1}^{\ell_{0}} a_{ik}(0) \overline{a_{js}(0)} \frac{1}{(\overline{{\mathcal B}^{T}}{\mathcal B})_{kk}}
\langle  \left( Q_{2}(0) - S \right)^{-1} \omega_{k}(0), ~ (Q_{1}(0) + S) \omega_{s}(0) \rangle.
\end{eqnarray*}

\noindent
Since $\omega_{j}(0)$ belongs to $\Ker R_{\DN}(0)$, we have $Q_{1}(0) \omega_{j}(0) = - Q_{2}(0) \omega_{j}(0)$, and hence
$(Q_{1}(0) + S) \omega_{j}(0) = - (Q_{2}(0) - S) \omega_{j}(0)$.
Since $Q_{1}(0)$ and $Q_{2}(0)$ are self-adjoint operators and $\overline{{\mathcal B}^{T}} {\mathcal B}$ is a diagonal matrix, we have

\begin{eqnarray}    \label{E:3.10}
\lim_{\lambda \rightarrow 0} \frac{\kappa_{i}(\lambda)}{\lambda} \delta_{ij}  =
 (-1) \sum_{k, s =1}^{\ell_{0}} a_{ik}(0) \overline{a_{js}(0)}
\frac{1}{(\overline{{\mathcal B}^{T}}{\mathcal B})_{kk}}
\langle \omega_{k}(0), ~ \omega_{s}(0) \rangle
~ = ~ - \big( ~ {\mathcal A} (\overline{{\mathcal B}^{T}}{\mathcal B})^{-1} \overline{{\mathcal A}^{T}} ~ \big)_{ij}.
\end{eqnarray}

\noindent

\noindent
Hence, we obtain

\begin{eqnarray}   \label{E:3.11}
\lim_{\lambda \rightarrow 0} \frac{\Pi_{j} \kappa_{j}(\lambda)}{\lambda^{\ell_{0}}}
& = & (-1)^{\ell_{0}} \frac{\ddet {\mathcal A} \overline{{\mathcal A}^{T}}}{\ddet {\mathcal C}},
\end{eqnarray}

\noindent
which leads to the following result.

\begin{theorem}   \label{Theorem:3.1}
We assume that $M_{0}$ is disconnected so that $M_{0} = M_{1} \cup M_{2}$ and assume that
both $\Delta_{M_{1}, {\frak R}_{1}, B}$ and $\Delta_{M_{2}, {\frak R}_{2}, B}$ are invertible operators with
$\ell_{0} = \Dim \Ker \Delta_{M, B}$.
We also take the line $\{ re^{i \theta_{0}} \mid - \frac{3 \pi}{2} < \theta_{0} < - \pi \}$ as a branch cut to define the logarithm.
Then,
\begin{eqnarray*}
& & \ln \Det^{\ast} \Delta_{M, B} - \ln \Det \Delta_{M_{1}, {\frak R}_{1}, B} - \ln \Det \Delta_{M_{2}, {\frak R}_{2}, B}   \\
& = & a_{0} + \ln (-1)^{\ell_{0}}
~ - ~ \ln \ddet {\mathcal C} + \ln \ddet {\mathcal A} \overline{{\mathcal A}^{T}}
~ + ~  \ln \Det^{\ast} R_{S}(0) \qquad (\Mod ~ 2\pi i),
\end{eqnarray*}
where $-a_{0}$ is the constant term in the asymptotic expansion of $\ln \Det R_{S}(\lambda)$ for $|\lambda| \rightarrow \infty$.
If $\Dim M$ is even, then $a_{0} = 0$.
\end{theorem}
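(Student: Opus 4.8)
The plan is to start from the $\lambda$-dependent gluing identity of Theorem \ref{Theorem:2.11}, specialize it to the disconnected case through (\ref{3.1}), and then read off the constant term by letting $\lambda \to 0$ along the ray $\lambda = re^{i\theta}$ with $0 < \theta < \frac{\pi}{2}$. Combining Theorem \ref{Theorem:2.11} with (\ref{3.1}) gives, for $\lambda \neq 0$,
\begin{eqnarray*}
& & \ln \Det(\Delta_{M,B} + \lambda) - \ln \Det(\Delta_{M_1, {\frak R}_1, B} + \lambda) - \ln \Det(\Delta_{M_2, {\frak R}_2, B} + \lambda) \\
& = & P(\lambda) + \ln \Det R_S(\lambda),
\end{eqnarray*}
where $P(\lambda) = \sum_{j=0}^{[\frac{m-1}{2}]} a_j \lambda^j$. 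Since $P(\lambda) \to a_0$, everything reduces to the $\lambda \to 0$ behaviour of the determinant terms on both sides.

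Next I would substitute the small-$\lambda$ expansions already recorded in (\ref{E:3.2}) and (\ref{E:3.4}). On the left only $\Delta_{M,B}$ is non-invertible, so its $\ell_0$-dimensional kernel contributes $\ln \lambda^{\ell_0} + \ln \Det^{\ast}\Delta_{M,B}$, whereas the invertible factors $\Delta_{M_i, {\frak R}_i, B}$ give their finite determinants up to $O(\lambda)$. On the right, (\ref{E:3.4}) peels off the $\ell_0$ eigenvalues $\kappa_1(\lambda), \ldots, \kappa_{\ell_0}(\lambda)$ of $R_S(\lambda)$ vanishing at $\lambda = 0$, so that $\ln \Det R_S(\lambda) = \ln \prod_{j=1}^{\ell_0} \kappa_j(\lambda) + \ln \Det^{\ast} R_S(0) + O(\lambda)$ modulo $2\pi i$. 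Writing $\ln \prod_j \kappa_j(\lambda) = \ln \lambda^{\ell_0} + \ln\big(\lambda^{-\ell_0} \prod_j \kappa_j(\lambda)\big)$, the two $\ln \lambda^{\ell_0}$ factors cancel across the identity; letting $\lambda \to 0$ then leaves the finite relation
\begin{eqnarray*}
& & \ln \Det^{\ast}\Delta_{M,B} - \ln \Det \Delta_{M_1, {\frak R}_1, B} - \ln \Det \Delta_{M_2, {\frak R}_2, B} \\
& = & a_0 + \lim_{\lambda \to 0} \ln \frac{\prod_j \kappa_j(\lambda)}{\lambda^{\ell_0}} + \ln \Det^{\ast} R_S(0) \qquad (\Mod ~ 2\pi i).
\end{eqnarray*}

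Finally I would evaluate the surviving limit by invoking (\ref{E:3.11}), which yields $\lim_{\lambda\to 0} \lambda^{-\ell_0} \prod_j \kappa_j(\lambda) = (-1)^{\ell_0} \ddet {\mathcal A}\overline{{\mathcal A}^T} / \ddet {\mathcal C}$; taking logarithms and splitting the quotient produces exactly $\ln(-1)^{\ell_0} + \ln \ddet {\mathcal A}\overline{{\mathcal A}^T} - \ln \ddet {\mathcal C}$, and substituting this gives the claimed formula. The assertion $a_0 = 0$ when $\Dim M$ is even is immediate from Corollary \ref{Corollary:2.14}. The genuine content sits not in this assembly but in the derivation feeding (\ref{E:3.11}): one must use Kato's theory to guarantee that the eigenvalues and eigensections of both $R_S(\lambda)$ and $R_{\DN}(\lambda)$ vary holomorphically, compute $\lim_{\lambda\to 0} \kappa_i(\lambda)/\lambda$ from the first-order perturbation $R_S^{\prime}(0)$ together with the identification of $\Ker R_{\DN}(0)$ and $\Ker R_S(0)$ under $Q_1(0) + S$, and match $\lim_{\lambda\to 0} \lambda/\tau_j(\lambda)$ with the diagonal entries of $\overline{{\mathcal B}^T}{\mathcal B}$. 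Tracking the change-of-basis matrix ${\mathcal A}$ against the Gram matrix ${\mathcal C} = {\mathcal B}\overline{{\mathcal B}^T}$ is where the factor $\ddet {\mathcal A}\overline{{\mathcal A}^T} / \ddet {\mathcal C}$ arises, and this is the step most vulnerable to sign and normalization slips.
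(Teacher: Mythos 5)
Your proposal is correct and follows essentially the same route as the paper: Section 3 of the paper proves Theorem \ref{Theorem:3.1} exactly by combining Theorem \ref{Theorem:2.11} with (\ref{3.1}), the small-$\lambda$ expansions (\ref{E:3.2}) and (\ref{E:3.4}), and the eigenvalue limit (\ref{E:3.11}), whose derivation via Kato's holomorphic perturbation theory, the map $Q_{1}(0)+S : \Ker R_{\DN}(0) \rightarrow \Ker R_{S}(0)$, and the identity $\lim_{\lambda \rightarrow 0} \lambda/\tau_{j}(\lambda) = (\overline{{\mathcal B}^{T}}{\mathcal B})_{jj}$ you correctly identify as the substantive content. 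The only ingredient you describe rather than execute is that derivation of (\ref{E:3.11}) itself (the paper's computations (\ref{E:3.5})--(\ref{E:3.10})), but your sketch of it matches the paper's argument step for step.
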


\vspace{0.2 cm}

As a special case of Theorem \ref{Theorem:3.1}, we consider the scalar Laplacian $\Delta_{M}$ acting on smooth functions on a closed Riemannian manifold $M$. We assume that $\Delta_{M_{i}, {\frak R}_{i}, B}$ are invertible,
{\it i.e.} $Q_{1}(0) + S$ and $Q_{2}(0) - S$ are invertible.
Then $\ell_{0} = 1$ and ${\mathcal C}$ is a $1 \times 1$ matrix. Moreover,
$\psi_{1} = \frac{1}{\sqrt{\vol(M)}}$ is a constant function, and hence

\begin{eqnarray}  \label{E:3.12}
{\mathcal C} & = & \bigg\langle \frac{1}{\sqrt{\vol(M)}}\big|_{{\mathcal N}}, ~  \frac{1}{\sqrt{\vol(M)}}\big|_{{\mathcal N}} \bigg\rangle ~ = ~ \frac{\vol({\mathcal N})}{\vol(M)}.
\end{eqnarray}

\noindent
Here $\Ker R_{S}(0) = \{ S c \mid c ~ \text{is a constant function}\}$ and $\Ker R_{\DN}(0) = \ker Q_{i}(0) = \{ \text{constant functions}\}$.
Eq.(\ref{E:3.5}) shows that ${\mathcal A} = \left( \frac{\sqrt{\vol({\mathcal N})}}{\parallel S(1)\parallel} \right)$.
Then, Theorem \ref{Theorem:3.1} is rewritten as follows.

\begin{corollary}   \label{Corollary:3.2}
We make the same assumptions as in Theorem \ref{Theorem:3.1}.
Let $\Delta_{M}$ be the scalar Laplacian acting on smooth functions on a closed Riemannian manifold $M$.
Then,
\begin{eqnarray*}
& & \ln \Det^{\ast} \Delta_{M, B} - \ln \Det \Delta_{M_{1}, {\frak R}_{1}, B} - \ln \Det \Delta_{M_{2}, {\frak R}_{2}, B}  \\
 & = & a_{0} ~ + ~ \ln (-1) ~ - ~ 2 \ln \parallel S(1)\parallel  ~ + ~ \ln \vol(M) ~ + ~ \ln \Det^{\ast} R_{S}(0) \qquad (\Mod ~ 2\pi i).
\end{eqnarray*}
\end{corollary}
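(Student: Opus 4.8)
The plan is to specialize Theorem \ref{Theorem:3.1} to the scalar Laplace--Beltrami operator, for which every ingredient on its right-hand side becomes explicitly computable. First I would record that on a closed manifold $M$ the kernel $\Ker \Delta_{M, B}$ consists precisely of the constant functions, so that $\ell_{0} = 1$ and the normalized harmonic section is $\psi_{1} = \frac{1}{\sqrt{\vol(M)}}$. The three null spaces entering the formula are then transparent: $\Ker \Delta_{M, B}$ is the constants; by (\ref{E:2.25}), $\Ker R_{\DN}(0) = \{ \psi|_{{\mathcal N}} \mid \psi \in \Ker \Delta_{M, B}\}$ is the constants on ${\mathcal N}$; and by Lemma \ref{Lemma:2.8}, $\Ker R_{S}(0) = \{ (\nabla_{\partial_{{\mathcal N}}}\psi)|_{{\mathcal N}} + S(\psi|_{{\mathcal N}}) \mid \psi \in \Ker \Delta_{M,B}\}$ reduces to $\{ S c \mid c \ \text{constant}\}$, since the normal derivative of a constant vanishes.

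Next I would compute the two matrices $\mathcal{C}$ and $\mathcal{A}$, both $1 \times 1$ here. For $\mathcal{C}$ one restricts $\psi_{1}$ to ${\mathcal N}$ and takes the inner product, giving $\mathcal{C} = \frac{\vol({\mathcal N})}{\vol(M)}$ as in (\ref{E:3.12}). For $\mathcal{A}$ I would take the normalized generators $\omega_{1}(0) = \frac{1}{\sqrt{\vol({\mathcal N})}}$ of $\Ker R_{\DN}(0)$ and $\varphi_{1}(0) = \frac{S(1)}{\parallel S(1)\parallel}$ of $\Ker R_{S}(0)$, and evaluate the defining relation (\ref{E:3.5}), namely $\varphi_{1}(0) = a_{11}(0)\,(Q_{1}(0)+S)\,\omega_{1}(0)$. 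Since constants lie in $\Ker Q_{1}(0)$ (the constant function is harmonic on $M_{1}$ with vanishing normal derivative), one has $(Q_{1}(0)+S)\,\omega_{1}(0) = S\,\omega_{1}(0) = \frac{1}{\sqrt{\vol({\mathcal N})}}\,S(1)$, and comparison with $\varphi_{1}(0)$ forces $a_{11}(0) = \frac{\sqrt{\vol({\mathcal N})}}{\parallel S(1)\parallel}$, so $\mathcal{A} = \big( \frac{\sqrt{\vol({\mathcal N})}}{\parallel S(1)\parallel}\big)$. This identification of $\mathcal{A}$ is the only genuinely nonroutine point, since it is where the change-of-basis relation (\ref{E:3.5}) between the eigenbases of $R_{\DN}(0)$ and $R_{S}(0)$ must be made explicit; everything else is bookkeeping.

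Finally I would substitute into Theorem \ref{Theorem:3.1}. With $\ell_{0}=1$ we have $\ln(-1)^{\ell_{0}} = \ln(-1)$, while $\ln \ddet \mathcal{C} = \ln \vol({\mathcal N}) - \ln \vol(M)$ and $\ln \ddet \mathcal{A}\overline{\mathcal{A}^{T}} = \ln \vol({\mathcal N}) - 2\ln \parallel S(1)\parallel$. The two $\ln\vol({\mathcal N})$ contributions cancel, leaving
\begin{eqnarray*}
\ln \Det^{\ast}\Delta_{M,B} - \ln \Det \Delta_{M_{1},{\frak R}_{1},B} - \ln \Det \Delta_{M_{2},{\frak R}_{2},B}
& = & a_{0} + \ln(-1) - 2\ln \parallel S(1)\parallel + \ln \vol(M) + \ln \Det^{\ast} R_{S}(0)
\end{eqnarray*}
modulo $2\pi i$, which is exactly the asserted identity.
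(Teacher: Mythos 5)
Your proposal is correct and takes essentially the same route as the paper: specialize Theorem \ref{Theorem:3.1} with $\ell_{0}=1$ and $\psi_{1} = \frac{1}{\sqrt{\vol(M)}}$, compute ${\mathcal C} = \frac{\vol({\mathcal N})}{\vol(M)}$ and ${\mathcal A} = \big( \frac{\sqrt{\vol({\mathcal N})}}{\parallel S(1)\parallel} \big)$, and substitute, with the two $\ln \vol({\mathcal N})$ terms cancelling. Your explicit derivation of ${\mathcal A}$ from (\ref{E:3.5}) --- using that constants lie in $\Ker Q_{1}(0)$, so that $(Q_{1}(0)+S)\,\omega_{1}(0) = S\,\omega_{1}(0) = \frac{1}{\sqrt{\vol({\mathcal N})}} S(1)$ --- correctly fills in a computation the paper only asserts.
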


\vspace{0.2 cm}
Finally, we discuss the conformal invariance of $\frac{\Det^{\ast} R_{S}(0)}{\parallel S(1) \parallel^{2}}$ when $\Dim M = 2$. Let $(M, g)$ be a $2$-dimensional closed Riemannian manifold and $\Delta_{M}$ be a scalar Laplacian.
We denote a conformal change of a metric by $g_{ij}(\epsilon) = e^{2\epsilon F} g_{ij}$, where $F : M \rightarrow {\mathbb R}$ is a smooth function.
We denote by $\Delta_{M}(\epsilon) := e^{- 2 \epsilon F} \Delta_{M}$ the Laplacian with respect to the metric $g_{ij}(\epsilon)$.
We assume that $\Delta_{M_{1}, {\frak R}_{1}}$ and $\Delta_{M_{2}, {\frak R}_{2}}$ are invertible. Then, Corollary \ref{Corollary:3.2} is rewritten by

\begin{eqnarray}    \label{E:3.13}
& & \ln \frac{\Det^{\ast} R_{S}(0)}{\parallel S(1)\parallel^{2}} (\epsilon)  \\
& = & \ln \Det^{\ast} \Delta_{M}(\epsilon) - \ln \Det \Delta_{M_{1}, {\frak R}_{1}}(\epsilon) - \ln \Det \Delta_{M_{2}, {\frak R}_{2}}(\epsilon) ~ - ~ \ln \vol(M)(\epsilon) ~ - ~ \ln (-1),  \quad (\Mod ~ 2\pi i).    \nonumber
\end{eqnarray}

\noindent
We denote

\begin{eqnarray}    \label{E:3.14}
\Tr \big( F e^{- t \Delta_{M}} \big) ~ \sim ~ \sum_{k=0}^{\infty} \beta_{k}(F) ~ t^{\frac{-2 + k}{2}}, \qquad
\Tr \big( F e^{- t \Delta_{M_{i}, {\frak R}_{i}}} \big) ~ \sim ~ \sum_{k=0}^{\infty} \beta^{(i)}_{k}(F) ~ t^{\frac{-2 + k}{2}}.
\end{eqnarray}

\noindent
By Theorem 2.12 in \cite{BG2} and Theorem 3.3.1, Theorem 3.5.1 in \cite{Gi2} (or Chapter 4 in \cite{Ki}), it follows that

\begin{eqnarray}     \label{E:3.15}
\frac{d}{d\epsilon}\big|_{\epsilon=0} \ln \frac{\Det^{\ast} R_{S}(0)}{\parallel S(1)\parallel^{2}} (\epsilon) & = &
- 2 \bigg( \beta_{2}(F) - \frac{1}{\vol(M)} \int_{M} F dx - \beta_{2}^{(1)}(F) - \beta_{2}^{(2)}(F) \bigg) - \frac{2}{\vol(M)} \int_{M} F dx  \nonumber \\
& = & 0,
\end{eqnarray}

\noindent
which we used the fact that $\beta_{2}(F) - \beta^{(1)}_{2}(F) - \beta_{2}^{(2)}(F) = 0$ since the Robin boundary condition on $M_{1}$ is $\nabla_{\partial_{{\mathcal N}}} + S$ and on $M_{2}$ is $\nabla_{- \partial_{{\mathcal N}}} - S$. This is an analogue of Theorem 1.1 in \cite{GG} and \cite{EW}.

\begin{corollary}   \label{Corollary:3.3}
Let $(M, g)$ be a $2$-dimensional closed Riemannian manifold and $\Delta_{M}$ be a scalar Laplacian.
We choose a closed hypersurface ${\mathcal N}$ such that the closure of $M - {\mathcal N}$ is a union of $M_{1}$ and $M_{2}$. We assume that $\Delta_{M_{1}, {\frak R}_{1}}$ and $\Delta_{M_{2}, {\frak R}_{2}}$ are invertible. Then, $\frac{\Det^{\ast} R_{S}(0)}{\parallel S(1) \parallel^{2}}$ is a conformal invariant.
\end{corollary}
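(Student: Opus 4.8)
The plan is to reduce the assertion to the vanishing of a single first variation. By Corollary~\ref{Corollary:3.2}, rewritten as (\ref{E:3.13}), the quantity $\ln \frac{\Det^{\ast} R_{S}(0)}{\parallel S(1)\parallel^{2}}$ differs from
\[
\ln \Det^{\ast}\Delta_{M} - \ln \Det \Delta_{M_{1}, {\frak R}_{1}} - \ln \Det \Delta_{M_{2}, {\frak R}_{2}} - \ln \vol(M)
\]
only by the constant $\ln(-1)$ (mod $2\pi i$). On a surface any two metrics in a fixed conformal class are joined by a path $g(\epsilon) = e^{2\epsilon F} g$, and the base point of such a path may be taken to be an arbitrary metric in the class; hence it suffices to prove that for every smooth $F : M \to {\mathbb R}$ the derivative at $\epsilon = 0$ of the displayed combination vanishes. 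This is the content of the computation (\ref{E:3.15}), and my task is to justify the two inputs that drive it.

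First I would record the conformal variation formulas for the zeta-determinant. For the scalar Laplacian $\Delta_{M}(\epsilon) = e^{-2\epsilon F}\Delta_{M}$ in dimension two, the Polyakov-type formula (Theorem~2.12 of \cite{BG2} together with the heat-coefficient formulas of \cite{Gi2} and \cite{Ki}) gives
\[
\frac{d}{d\epsilon}\Big|_{\epsilon=0}\ln\Det^{\ast}\Delta_{M}(\epsilon) = -2\Big(\beta_{2}(F) - \frac{1}{\vol(M)}\int_{M}F\,dx\Big),
\]
where $\beta_{2}(F)$ is the $t^{0}$ coefficient in the expansion (\ref{E:3.14}) and the volume-average term is the correction produced by the constant zero mode of $\Delta_{M}$. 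For the two Robin pieces, which are invertible by assumption and thus have no zero mode, the analogous formula is $\frac{d}{d\epsilon}|_{\epsilon=0}\ln\Det \Delta_{M_{i}, {\frak R}_{i}}(\epsilon) = -2\beta_{2}^{(i)}(F)$. Since $dV(\epsilon) = e^{2\epsilon F}\,dV$ on a surface, one also has $\frac{d}{d\epsilon}|_{\epsilon=0}\ln\vol(M)(\epsilon) = \frac{2}{\vol(M)}\int_{M}F\,dx$. Substituting these into the differentiated form of (\ref{E:3.13}) cancels the two volume-average terms and leaves the expression $-2\big(\beta_{2}(F) - \beta_{2}^{(1)}(F) - \beta_{2}^{(2)}(F)\big)$.

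The hard part is then the purely local identity $\beta_{2}(F) = \beta_{2}^{(1)}(F) + \beta_{2}^{(2)}(F)$. I would establish it by invoking the locality of heat coefficients: $\beta_{2}(F)$ is the integral over $M$ of an interior density built from $F$ and the curvature of $g$, while each $\beta_{2}^{(i)}(F)$ is the integral of the \emph{same} interior density over $M_{i}$ plus a boundary density over ${\mathcal N}_{i}$. The interior integrals over $M_{1}$ and $M_{2}$ reassemble $\int_{M}$, so the claim reduces to the cancellation of the two boundary contributions. Here the decisive geometric observation, noted below (\ref{E:3.15}), is that ${\mathcal N}_{1}$ and ${\mathcal N}_{2}$ are the same hypersurface carrying opposite outward conormals: the second fundamental form of ${\mathcal N}$ viewed as the boundary of $M_{2}$ is the negative of its value as the boundary of $M_{1}$, and the Robin endomorphism is $+S$ on the $M_{1}$ side but $-S$ on the $M_{2}$ side (the condition on $M_{2}$ being $\nabla_{-\partial_{{\mathcal N}}} - S$). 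In dimension two the $t^{0}$ boundary integrand of the Robin heat coefficient is odd under this simultaneous reversal of the normal and of $S$, so the densities on ${\mathcal N}_{1}$ and ${\mathcal N}_{2}$ cancel pointwise. Carrying out the explicit Gilkey boundary-coefficient bookkeeping to confirm this oddness is the one genuinely computational step, and it is where I expect the real work to lie.

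With the cancellation in hand the conclusion is immediate: the differentiated combination equals $-2\big(\beta_{2}(F) - \beta_{2}^{(1)}(F) - \beta_{2}^{(2)}(F)\big) = 0$ for every $F$, so $\ln \frac{\Det^{\ast} R_{S}(0)}{\parallel S(1)\parallel^{2}}$ is unchanged along every conformal direction at every base metric. Being locally constant on the connected conformal class of $g$, it is constant there, which is exactly the claimed conformal invariance. As the authors observe, this is the analogue in the present setting of the invariance results of \cite{GG} and \cite{EW}.
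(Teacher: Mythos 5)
Your proposal is correct and follows essentially the same route as the paper: it differentiates the identity (\ref{E:3.13}) in the conformal parameter using the variation formulas from \cite{BG2} and \cite{Gi2}, cancels the volume-average terms coming from the zero mode of $\Delta_{M}$, and reduces everything to the heat-coefficient identity $\beta_{2}(F) = \beta_{2}^{(1)}(F) + \beta_{2}^{(2)}(F)$, which the paper likewise justifies by the observation that the boundary condition is $\nabla_{\partial_{{\mathcal N}}} + S$ on $M_{1}$ and $\nabla_{-\partial_{{\mathcal N}}} - S$ on $M_{2}$. Your added remarks --- spelling out the locality/oddness mechanism behind the boundary cancellation and noting that vanishing first variation at every base metric integrates to constancy on the connected conformal class --- only make explicit what the paper leaves implicit.
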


\vspace{0.3 cm}

\section{Comparison of the Robin and Dirichlet Boundary Conditions}
Let $(W, Y \cup Z, g^{W})$ be an $m$-dimensional compact oriented Riemannian manifold with boundary $Y \cup Z$, where
$Y \cap Z = \emptyset$ and $Z$ may be empty. If $Z \neq \emptyset$, we impose the Dirichlet or Neumann boundary condition $B$ on Z as before.
We let ${\mathcal E} \rightarrow W$ be a Hermitian vector bundle on $W$ and $\Delta_{W}$ be a Laplacian
such that $\Delta_{W}$ satisfies (\ref{E:2.1}), (\ref{E:2.2}) and that
$\Delta_{W, D, B}$ is a non-negative operator, where $\Delta_{W, D, B}$ is the
Laplacian $\Delta_{W}$ with the Dirichlet condition $D$ on $Y$ and $B$ on $Z$.
Then, $\Delta_{W, D, B}$ is a symmetric operator with the principal symbol
$\sigma_{L}(\Delta_{W})(x^{\prime}, \xi^{\prime}) = \parallel \xi^{\prime} \parallel^{2}$.
We choose an outward unit normal vector field $\partial_{x_{m}}$ on $Y$ and choose the Robin boundary condition ${\frak R}$ on $Y$ defined by

\begin{eqnarray}   \label{E:4.1}
{\frak R} : C^{\infty}({\mathcal E}) \rightarrow C^{\infty}({\mathcal E}|_{Y}), \qquad  {\frak R} \phi ~ = ~ \left( \nabla_{\partial_{x_{m}}} \phi \right)|_{Y} + S \left( \phi|_{Y} \right),
\end{eqnarray}

\noindent
where $S : Y \rightarrow {\mathcal End} \left({\mathcal E}|_{Y} \right)$ is a section such that
$S(y) : {\mathcal E}_{y} \rightarrow {\mathcal E}_{y}$ is symmetric for each $y \in Y$ as in
Definition \ref{Definition:2.1}.
In this section, we are going to discuss
$~~\ln \Det \Delta_{W, {\frak R}, B} - \ln \Det \Delta_{W, D, B}$.
We define the Poisson operator
${\mathcal P}_{D}(\lambda) : C^{\infty}({\mathcal E}|_{Y}) \rightarrow C^{\infty}({\mathcal E})$ as follows.
For $f \in C^{\infty}({\mathcal E}|_{Y})$, let ${\widetilde f}$ be any smooth extension of $f$ satisfying
$B({\widetilde f}) = 0$ on $Z$. Then,
${\mathcal P}_{D}(\lambda) f$ is defined by

\begin{eqnarray*}
{\mathcal P}_{D}(\lambda) f & = & {\widetilde f} -
\big( \Delta_{W, D, B} + \lambda \big)^{-1} \big( \Delta_{W} + \lambda \big) {\widetilde f}.
\end{eqnarray*}

\noindent
Then, ${\mathcal P}_{D}(\lambda)$ satisfies

\begin{eqnarray}   \label{E:4.2}
(\Delta_{W} + \lambda) \cdot {\mathcal P}_{D}(\lambda) = 0, \qquad
\gamma_{0} \cdot {\mathcal P}_{D}(\lambda) = \Id_{C^{\infty}({\mathcal E}|_{Y})}, \qquad
B \cdot {\mathcal P}_{D}(\lambda) = 0,
\end{eqnarray}

\noindent
where $\gamma_{0}$ is the restriction map to $Y$.
Taking derivative with respect to $\lambda$, we get

\begin{eqnarray}   \label{E:4.3}
\frac{d}{d \lambda} {\mathcal P}_{D}(\lambda) & = &
- (\Delta_{W, D, B} + \lambda)^{-1} \cdot {\mathcal P}_{D}(\lambda).
\end{eqnarray}

\vspace{0.2 cm}

\begin{definition}   \label{Definition:4.1}
We define $Q_{D}(\lambda), ~ Q_{D}(\lambda) + S : C^{\infty}({\mathcal E}|_{Y}) \rightarrow C^{\infty}({\mathcal E}|_{Y})$ as follows.
\begin{eqnarray*}
& & Q_{D}(\lambda) f := \big( \nabla_{\partial_{x_{m}}} {\mathcal P}_{D}(\lambda) f \big)\big|_{Y},  \\
& & ( Q_{D}(\lambda) + S ) f := \big( \nabla_{\partial_{x_{m}}} {\mathcal P}_{D}(\lambda) f \big)\big|_{Y} + S f ~ = ~
{\frak R} \big({\mathcal P}_{D}(\lambda) f \big).
\end{eqnarray*}
\end{definition}

\noindent
From the definition it follows that

\begin{eqnarray}     \label{E:4.4}
\Ker Q_{D}(0) & = & \{ \phi|_{Y} \mid \phi \in \Ker \Delta_{M, N, B} \}, \qquad
\Ker ( Q_{D}(0) + S ) ~ = ~ \{ \phi|_{Y} \mid \phi \in \Ker \Delta_{M, {\frak R}, B} \}.
\end{eqnarray}

\noindent
We note that

\begin{eqnarray}   \label{E:4.5}
\frac{d}{d \lambda} ( Q_{D}(\lambda) + S ) & = &
{\frak R} \cdot \frac{d}{d \lambda} {\mathcal P}_{D}(\lambda)
~ = ~ {\frak R} \cdot
\left\{ - (\Delta_{W, D, B} + \lambda)^{-1} \cdot {\mathcal P}_{D}(\lambda)  \right\}    \\
& = & {\frak R} \cdot \left( (\Delta_{W, {\frak R}, B} + \lambda)^{-1} - (\Delta_{W, D, B} + \lambda)^{-1} \right) \cdot {\mathcal P}_{D}(\lambda)   \nonumber  \\
& = & {\frak R} \cdot {\mathcal P}_{D}(\lambda) \cdot \gamma_{0} \cdot
(\Delta_{W, {\frak R}, B} + \lambda)^{-1} \cdot  {\mathcal P}_{D}(\lambda)    \nonumber  \\
& = & ( Q_{D}(\lambda) + S ) \cdot \gamma_{0} \cdot
(\Delta_{W, {\frak R}, B} + \lambda)^{-1} \cdot  {\mathcal P}_{D}(\lambda),   \nonumber
\end{eqnarray}

\noindent
which shows that

\begin{eqnarray}   \label{E:4.6}
( Q_{D}(\lambda) + S )^{-1} \frac{d}{d \lambda} ( Q_{D}(\lambda) + S ) & = &
\gamma_{0} \cdot (\Delta_{W, {\frak R}, B} + \lambda)^{-1} \cdot  {\mathcal P}_{D}(\lambda).
\end{eqnarray}

\noindent
We now put $\nu = \big[\frac{m-1}{2}\big] + 1$. Then,

\begin{eqnarray}    \label{E:4.7}
& & \frac{d^{\nu}}{d \lambda^{\nu}} \left\{ \ln \Det \left( \Delta_{W, {\frak R}, B} + \lambda \right) -
\ln \Det \left( \Delta_{W, D, B} + \lambda \right) \right\}   \\
& = & \Tr \left\{ \frac{d^{\nu-1}}{d \lambda^{\nu-1}} \left( (\Delta_{W, {\frak R}, B} + \lambda)^{-1} -
(\Delta_{W, D, B} + \lambda)^{-1} \right) \right\}    \nonumber  \\
& = & \Tr \left\{ \frac{d^{\nu-1}}{d \lambda^{\nu-1}} \left( {\mathcal P}_{D}(\lambda) \cdot \gamma_{0}
 \cdot (\Delta_{W, {\frak R}, B} + \lambda)^{-1} \right) \right\}
~ = ~ \Tr \left\{ \frac{d^{\nu-1}}{d \lambda^{\nu-1}} \left(  \gamma_{0} \cdot
(\Delta_{W, {\frak R}, B} + \lambda)^{-1} \cdot {\mathcal P}_{D}(\lambda) \right) \right\},    \nonumber
\end{eqnarray}

\noindent
which leads to the following result.

\begin{eqnarray}  \label{E:4.8}
\frac{d^{\nu}}{d \lambda^{\nu}} \left\{ \ln \Det \left( \Delta_{W, {\frak R}, B} + \lambda \right) -
\ln \Det \left( \Delta_{W, D, B} + \lambda \right) \right\}
& = & \frac{d^{\nu}}{d \lambda^{\nu}} \ln \Det ( Q_{D}(\lambda) + S ).
\end{eqnarray}

\noindent
For some polynomial $G(\lambda) = \sum_{j=0}^{\frac{[m-1]}{2}} b_{j} \lambda^{j}$, it follows (cf. \cite{KL5}) that

\begin{eqnarray}    \label{E:4.9}
\ln \Det (\Delta_{W, {\frak R}, B} + \lambda) - \ln \Det (\Delta_{W, D, B} + \lambda) & = & \sum_{j=0}^{\frac{[m-1]}{2}} b_{j} \lambda^{j} + \ln \Det (Q_{D}(\lambda) + S).
\end{eqnarray}

\noindent
We note that both terms in the left hand side of (\ref{E:4.9}) have asymptotic expansions for $|\lambda| \rightarrow \infty$ whose constant terms are zero. Comparison of both sides of (\ref{E:4.9}) implies that
$\ln \Det (Q_{D}(\lambda) + S)$ has an asymptotic expansion for $|\lambda| \rightarrow \infty$.
Hence, $- b_{0}$ is the constant term in the asymptotic expansion of $\ln \Det (Q_{D}(\lambda) + S)$
for $|\lambda| \rightarrow \infty$.

We next consider the asymptotic behaviors for $\lambda \rightarrow 0$.
Let $p_{0} = \Dim \Ker \Delta_{W, {\frak R}, B} = \Dim \Ker (Q_{D}(0) + S)$.
We denote by $\{ \eta_{k}(\lambda) \mid k \in {\mathbb N} \}$ and $\{ \phi_{k}(\lambda) \mid k \in {\mathbb N} \}$ the eigenvalues and corresponding orthonormal eigensections of $Q_{D}(\lambda) + S$, where $\eta_{j}(0) = 0$ for $1 \leq j \leq p_{0}$. Then,

\begin{eqnarray}    \label{E:4.10}
& & \ln \Det^{\ast}\Delta_{W, {\frak R}, B} ~ - ~ \ln \Det \Delta_{W, D, B}   \\
& = & \lim_{\lambda \rightarrow 0} \ln \frac{\eta_{1}(\lambda) \cdots \eta_{p_{0}}(\lambda)}{\lambda^{p_{0}}}
~ + ~ b_{0} ~ + ~ \ln \Det^{\ast} (Q_{D}(0) + S) \qquad (\Mod  2 \pi i).   \nonumber
\end{eqnarray}

\noindent
We choose $\Phi_{k}(\lambda) \in C^{\infty}(W)$ such that

\begin{eqnarray}   \label{E:4.11}
(\Delta_{W} + \lambda) \Phi_{k}(\lambda) = 0, \qquad \Phi_{k}(\lambda)|_{Y} = \phi_{k}(\lambda), \qquad B(\Phi_{k}(\lambda)) = 0.
\end{eqnarray}

\noindent
Using the Green theorem (Lemma 1.4.17 in \cite{Gi2}), it follows that

\begin{eqnarray}   \label{E:4.12}
& & 0 ~ = ~ \langle (\Delta_{W} + \lambda) \Phi_{i}(\lambda), \Phi_{j}(0) \rangle_{W} ~ = ~
\langle \Delta_{W} \Phi_{i}(\lambda), \Phi_{j}(0) \rangle_{W} ~ + ~ \lambda ~ \langle  \Phi_{i}(\lambda), \Phi_{j}(0) \rangle_{W}  \\
& = & \langle \Delta_{W} \Phi_{i}(\lambda), \Phi_{j}(0) \rangle_{W} ~ - ~
\langle \Phi_{i}(\lambda), \Delta_{W} \Phi_{j}(0) \rangle_{W} ~ + ~
\lambda ~ \langle  \Phi_{i}(\lambda), \Phi_{j}(0) \rangle_{W}       \nonumber  \\
& = & - \langle \nabla_{\partial_{x_{m}}} \phi_{i}(\lambda), \phi_{j}(0) \rangle_{Y} ~ + ~
\langle \phi_{i}(\lambda), \nabla_{\partial_{x_{m}}} \phi_{j}(0) \rangle_{Y} ~ + ~
\lambda ~ \langle  \Phi_{i}(\lambda), \Phi_{j}(0) \rangle_{W}       \nonumber \\
& = & - \langle (Q_{D}(\lambda) + S) \phi_{i}(\lambda), \phi_{j}(0) \rangle_{Y} ~ + ~
\langle \phi_{i}(\lambda), (Q_{D}(0) + S) \phi_{j}(0) \rangle_{Y} ~ + ~
\lambda ~ \langle  \Phi_{i}(\lambda), \Phi_{j}(0) \rangle_{W}     \nonumber \\
& = & - \eta_{i}(\lambda) ~ \langle \phi_{i}(\lambda), \phi_{j}(0) \rangle_{Y} ~ + ~
\lambda ~ \langle  \Phi_{i}(\lambda), \Phi_{j}(0) \rangle_{W},    \nonumber
\end{eqnarray}

\noindent
which shows that

\begin{eqnarray}       \label{E:4.13}
\lim_{\lambda \rightarrow 0} \frac{\eta_{i}(\lambda)}{\lambda} \delta_{ij} & = & \langle  \Phi_{i}(0), \Phi_{j}(0) \rangle_{W}.
\end{eqnarray}

\noindent
We denote by $\{ \psi_{1}, \cdots, \psi_{p_{0}} \}$ an orthonormal basis of $\Ker \Delta_{W, {\frak R}, B}$
and define three $p_{0} \times p_{0}$ matrices by

\begin{eqnarray}    \label{E:4.14}
& & a_{ij} = \langle \psi_{i}|_{Y}, \psi_{j}|_{Y} \rangle_{Y}, \qquad {\frak A} = \left( a_{ij} \right), \qquad
b_{ij} = \langle \psi_{i}|_{Y}, \phi_{j}(0) \rangle_{Y},  \qquad {\frak B} = \left( b_{ij} \right),  \\
& & c_{ij} = \langle  \Phi_{i}(0), \Phi_{j}(0) \rangle_{W}, \qquad {\frak C} = \left( c_{ij} \right).  \nonumber
\end{eqnarray}

\noindent
Then, it follows that $\psi_{i}|_{Y} = \sum_{k} b_{ik} \phi_{k}(0)$
and $\psi_{i} = \sum_{k} b_{ik} \Phi_{k}(0)$, which shows that

\begin{eqnarray}    \label{E:4.15}
{\frak A} & = & {\frak B} \overline{{\frak B}^{T}}, \qquad
\Id ~ = ~ {\frak B} {\frak C} \overline{{\frak B}^{T}}.
\end{eqnarray}

\noindent
Hence, we have

\begin{eqnarray}     \label{E:4.16}
\lim_{\lambda \rightarrow 0} \ln \frac{\eta_{1}(\lambda) \cdots \eta_{p_{0}}(\lambda)}{\lambda^{p_{0}}} & = &
\ln \ddet {\frak C} ~ = ~ - \ln \ddet {\frak B} \overline{{\frak B}^{T}} ~ = ~
- \ln \ddet {\frak A},
\end{eqnarray}

\noindent
which leads to the following result.

\begin{theorem}  \label{Theorem:4.2}
Let $(W, Y \cup Z, g^{W})$ be a compact oriented Riemannian manifold with boundary $Y \cup Z$
with boundary conditions ${\frak R}$ and $D$ on $Y$ and $B$, respectively. Then,
the following equality holds.
\begin{eqnarray*}
\ln \Det^{\ast} \Delta_{W, {\frak R}, B}  - \ln \Det \Delta_{W, D, B} & = &
b_{0} - \ln \ddet {\frak A} + \ln \Det^{\ast} \left( Q_{D}(0) + S \right) \qquad (\Mod  2 \pi i),
\end{eqnarray*}
where $- b_{0}$ is the constant term in the asymptotic expansion of $\ln \Det (Q_{D}(\lambda) + S)$ for $|\lambda| \rightarrow \infty$.
If $Q_{D}(0) + S$ is an invertible operator, $\ln \ddet {\frak A}$ does not appear.
\end{theorem}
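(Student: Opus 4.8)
The plan is to read off the statement by synthesizing the two asymptotic regimes, $|\lambda|\to\infty$ and $\lambda\to 0$, of the one-parameter comparison already in hand. The backbone is (\ref{E:4.9}): differentiating $\nu=[\frac{m-1}{2}]+1$ times in $\lambda$ collapses the difference $\ln\Det(\Delta_{W,{\frak R},B}+\lambda)-\ln\Det(\Delta_{W,D,B}+\lambda)$ onto $\ln\Det(Q_D(\lambda)+S)$ by (\ref{E:4.8}), and integrating $\nu$ times reintroduces a polynomial correction of degree at most $\nu-1=[\frac{m-1}{2}]$. Matching the large-$|\lambda|$ expansions, where both determinants on the left carry vanishing constant term, pins down $-b_0$ as the constant term of the expansion of $\ln\Det(Q_D(\lambda)+S)$. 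This fixes everything in (\ref{E:4.9}) except the behavior at $\lambda=0$.

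First I would pass to the limit $\lambda\to 0$ in (\ref{E:4.9}). The essential point is that $\Delta_{W,{\frak R},B}$ and $Q_D(0)+S$ have kernels of the same dimension $p_0$ by (\ref{E:4.4}), so both modified determinants $\Det^{\ast}$ appear, and each parametrized determinant factors into a $\lambda^{p_0}$ contribution from the eigenvalues degenerating to zero times the corresponding $\Det^{\ast}$ plus $O(\lambda)$; this is exactly (\ref{E:4.10}). What remains is to evaluate the ratio $\lim_{\lambda\to 0}\prod_{j=1}^{p_0}\eta_j(\lambda)/\lambda^{p_0}$. Using the harmonic extensions $\Phi_k(\lambda)$ of (\ref{E:4.11}), the vanishing pairing $\langle(\Delta_W+\lambda)\Phi_i(\lambda),\Phi_j(0)\rangle_W=0$ unwinds through Green's theorem, as in (\ref{E:4.12}), to $-\eta_i(\lambda)\langle\phi_i(\lambda),\phi_j(0)\rangle_Y+\lambda\langle\Phi_i(\lambda),\Phi_j(0)\rangle_W=0$, yielding $\lim_{\lambda\to 0}\frac{\eta_i(\lambda)}{\lambda}\delta_{ij}=c_{ij}$, i.e. the matrix ${\frak C}$ of (\ref{E:4.14}), so the ratio is $\ddet{\frak C}$. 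The identities ${\frak A}={\frak B}\overline{{\frak B}^T}$ and $\Id={\frak B}{\frak C}\overline{{\frak B}^T}$ from (\ref{E:4.15}) then force $\ddet{\frak C}=(\ddet{\frak A})^{-1}$, giving the contribution $-\ln\ddet{\frak A}$ recorded in (\ref{E:4.16}).

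Substituting (\ref{E:4.16}) into (\ref{E:4.10}) produces the asserted equality modulo $2\pi i$. For the final clause, if $Q_D(0)+S$ is invertible then $p_0=0$: there are no degenerating eigenvalues $\eta_j(\lambda)$, the matrix ${\frak A}$ is empty with $\ddet{\frak A}=1$, and $\ln\ddet{\frak A}$ drops out, leaving $b_0+\ln\Det(Q_D(0)+S)$. The main obstacle is the bookkeeping of the $\lambda\to 0$ limit rather than any single estimate: one must justify that the analytic perturbation of the self-adjoint family $Q_D(\lambda)+S$ isolates precisely $p_0$ eigenvalues tending to zero with nonzero derivative, so that the factorization (\ref{E:4.10}) is valid and $\ddet{\frak C}$ is nonzero (equivalently ${\frak A}$ is nonsingular); once this is secured, the theorem is a direct combination of the two limits.
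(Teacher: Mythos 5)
Your proposal is correct and follows essentially the same route as the paper: the $\nu$-fold differentiation/integration yielding (\ref{E:4.9}), the large-$|\lambda|$ matching that identifies $-b_{0}$ as the constant term of $\ln \Det (Q_{D}(\lambda)+S)$, the small-$\lambda$ factorization (\ref{E:4.10}), the Green's-theorem computation (\ref{E:4.12})--(\ref{E:4.13}) giving $\lim_{\lambda \rightarrow 0} \eta_{i}(\lambda)/\lambda \, \delta_{ij} = c_{ij}$, and the matrix identities (\ref{E:4.15}) forcing $\ddet {\frak C} = (\ddet {\frak A})^{-1}$ are exactly the paper's argument, including the observation that invertibility of $Q_{D}(0)+S$ makes $p_{0}=0$ so the $\ln \ddet {\frak A}$ term drops out. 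Your closing remark that the Kato-type holomorphic perturbation of the self-adjoint family $Q_{D}(\lambda)+S$ must be invoked to isolate the $p_{0}$ degenerating eigenvalues is also consistent with the paper, which relies on the same fact (made explicit in Section 3 via \cite{Ka}).
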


\begin{corollary}  \label{Corollary:4.3}
Let ${\frak R}_{p} = \nabla_{\partial_{x_{m}}} + S_{p}$ and ${\frak R}_{q} = \nabla_{\partial_{x_{m}}} + S_{q}$ be two Robin boundary conditions, where $S_{p}(y)$ and $S_{q}(y)$ are symmetric for $y \in Y$.
Then,
\begin{eqnarray*}
& & \ln \Det^{\ast} \Delta_{W, {\frak R}_{p}, B}  - \ln \Det^{\ast} \Delta_{W, {\frak R}_{q}, B}  \\
& = & b^{p}_{0} - b^{q}_{0} - \ln \ddet {\frak A}_{p} + \ln \ddet {\frak A}_{q} +
\ln \Det^{\ast} \left( Q_{D}(0) + S_{p} \right) - \ln \Det^{\ast} \left( Q_{D}(0) + S_{q} \right)
\qquad (\Mod  2 \pi i).
\end{eqnarray*}
\end{corollary}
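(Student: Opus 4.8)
The plan is to obtain this identity as an immediate consequence of Theorem \ref{Theorem:4.2}, applied separately to each of the two Robin boundary conditions and then subtracted. The crucial observation is that the comparison in Theorem \ref{Theorem:4.2} is always made against the \emph{same} reference operator $\Delta_{W, D, B}$, regardless of the Robin data $S$, and that the operator $Q_{D}(\lambda)$ is built solely from the Dirichlet Poisson operator ${\mathcal P}_{D}(\lambda)$ (see Definition \ref{Definition:4.1}) and hence does not depend on $S$ either. Thus the only ingredients that change when passing from ${\frak R}_{p}$ to ${\frak R}_{q}$ are the shift $S$, the polynomial constant term $b_{0}$, the kernel matrix ${\frak A}$ defined in (\ref{E:4.14}), and the modified zeta-determinant of $Q_{D}(0) + S$.

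First I would apply Theorem \ref{Theorem:4.2} with the Robin boundary condition ${\frak R}_{p} = \nabla_{\partial_{x_{m}}} + S_{p}$, writing $b_{0}^{p}$ and ${\frak A}_{p}$ for the corresponding constant term and kernel matrix constructed as in (\ref{E:4.14}) from an orthonormal basis of $\Ker \Delta_{W, {\frak R}_{p}, B}$. This yields
\begin{eqnarray*}
\ln \Det^{\ast} \Delta_{W, {\frak R}_{p}, B} - \ln \Det \Delta_{W, D, B} & = & b_{0}^{p} - \ln \ddet {\frak A}_{p} + \ln \Det^{\ast}(Q_{D}(0) + S_{p}) \qquad (\Mod 2\pi i).
\end{eqnarray*}
Repeating the same argument verbatim with $S_{q}$ in place of $S_{p}$ gives the analogous identity with every label $p$ replaced by $q$, the point being that both applications share the identical Dirichlet data on $Y$ and on $Z$.

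Finally I would subtract the two identities. Since $\Delta_{W, D, B}$ involves only the Dirichlet condition on $Y$ together with the fixed condition $B$ on $Z$, the term $\ln \Det \Delta_{W, D, B}$ is common to both and cancels, leaving exactly the asserted formula modulo $2\pi i$. I do not anticipate a genuine obstacle here, since the entire content is carried by Theorem \ref{Theorem:4.2}; the only point requiring a word of care is the observation that $Q_{D}(0)$, being defined through the $S$-independent operator ${\mathcal P}_{D}(\lambda)$, is literally the same operator in both applications, so that $Q_{D}(0) + S_{p}$ and $Q_{D}(0) + S_{q}$ differ only by the bounded endomorphism $S_{p} - S_{q}$ and there is no ambiguity in comparing their modified determinants.
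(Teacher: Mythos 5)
Your proposal is correct and is exactly the argument the paper intends: the corollary is stated without a separate proof precisely because it follows from applying Theorem \ref{Theorem:4.2} once with $S_{p}$ and once with $S_{q}$ and subtracting, with the common term $\ln \Det \Delta_{W, D, B}$ cancelling. Your remark that $Q_{D}(\lambda)$ is built from the $S$-independent Dirichlet Poisson operator ${\mathcal P}_{D}(\lambda)$, so the same $Q_{D}(0)$ appears in both applications, is the right point of care and matches Definition \ref{Definition:4.1}.
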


\vspace{0.2 cm}
We next compute $b_{0}$ in Theorem \ref{Theorem:4.2} in some specific case. We suppose that $Y$ consists of $k_{0}$ components, {\it i.e.}
$Y = \cup_{i=1}^{k_{0}} Y_{i}$, and assume that the metric on $W$ is a product one near $Y$ so that $\Delta_{W} = - \partial_{x_{m}}^{2} + \Delta_{Y}$ on a collar neighborhood of $Y$.
Then, it is known that $Q_{D}(\lambda)$ differs from $(\sqrt{\Delta_{Y} + \lambda})$ by a smoothing operator (\cite{Le2}, \cite{PW}). We choose $S = \left( \alpha_{i} \delta_{ij} \right)_{1 \leq i, j \leq k_{0}}$.
Hence, for some smoothing operator ${\mathcal K}(\lambda)$ on $Y$,

\begin{eqnarray}    \label{E:4.17}
Q_{D}(\lambda) + S & = & \left( \begin{array}{clcr} \sqrt{\Delta_{Y_{1}} + \lambda} + \alpha_{1} & 0 & 0  \\ 0 & \ddots & 0 \\ 0 & 0 & \sqrt{\Delta_{Y_{k_{0}}} + \lambda} + \alpha_{k_{0}} \end{array} \right)  ~ + ~ {\mathcal K}(\lambda).
\end{eqnarray}

\noindent
It is shown in the appendix of \cite{BFK1} that $\ln \Det (Q_{D}(\lambda) + \alpha)$ and
$\sum_{i=1}^{k_{0}} \ln \Det \big( \sqrt{\Delta_{Y_{i}} + \lambda} + \alpha_{i} \big)$ have the same asymptotic expansion for $|\lambda| \rightarrow \infty$.
The constant term $b^{i}_{0}$ in the asymptotic expansion of $- \ln \Det \left( \sqrt{\Delta_{Y_{i}} + \lambda} + \alpha_{i} \right)$
is given in (\ref{E:2.58}), which is $b^{i}_{0} = - {\frak s}_{\alpha_{i}}$.

\begin{corollary}  \label{Corollary:4.5}
We assume that $\Dim W \geq 2$ and the metric is a product one near the boundary $Y$ so that $\Delta_{W} = - \partial_{x_{m}}^{2} + \Delta_{Y}$ on a collar neighborhood of $Y$. Suppose that $Y$ consists of $k_{0}$ components and choose $S$ by (\ref{E:2.78}) so that $Q_{D}(\lambda) + S$ is given by (\ref{E:4.17}). Then,
\begin{eqnarray*}
\ln \Det^{\ast} \Delta_{W, {\frak R}, B}  - \ln \Det \Delta_{W, D, B} & = &
- \sum_{i=1}^{k_{0}} {\frak s}_{\alpha_{i}} - \ln \ddet {\frak A}  + \ln \Det^{\ast} \left( Q_{D}(0) + S \right)
 \qquad (\Mod  2 \pi i),
\end{eqnarray*}
where ${\frak s}_{\alpha_{i}}$ is given by (\ref{E:2.58}). In particular, if $\alpha_{i} = 0$, then ${\frak s}_{\alpha_{i}} = 0$.
If $Q_{D}(0) + S$ is invertible, then, $\ln \ddet {\frak A}$ does not appear.
\end{corollary}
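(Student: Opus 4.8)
The plan is to derive Corollary~\ref{Corollary:4.5} directly from Theorem~\ref{Theorem:4.2} by evaluating the single remaining unknown, the constant $b_0$, under the product hypothesis. Theorem~\ref{Theorem:4.2} already expresses $\ln \Det^{\ast} \Delta_{W, {\frak R}, B} - \ln \Det \Delta_{W, D, B}$ in terms of $b_0$, $\ln\ddet{\frak A}$, and $\ln\Det^{\ast}(Q_D(0)+S)$, and identifies $-b_0$ as the constant term in the asymptotic expansion of $\ln\Det(Q_D(\lambda)+S)$ for $|\lambda|\to\infty$. So the whole task reduces to computing that constant term when the metric is a product near $Y$.

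First I would invoke the product structure. By (\ref{E:4.17}), on a collar neighbourhood of $Y=\cup_{i=1}^{k_0}Y_i$ the operator $Q_D(\lambda)+S$ equals the block-diagonal operator $\bigoplus_{i=1}^{k_0}\big(\sqrt{\Delta_{Y_i}+\lambda}+\alpha_i\big)$ up to a smoothing operator ${\mathcal K}(\lambda)$. Two facts then combine. On the one hand, the spectrum of a block-diagonal operator is the disjoint union of the block spectra, so its zeta function, and hence $\ln\Det$, splits exactly as $\sum_{i=1}^{k_0}\ln\Det\big(\sqrt{\Delta_{Y_i}+\lambda}+\alpha_i\big)$. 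On the other hand, the key analytic input cited from the Appendix of \cite{BFK1} is that the full asymptotic expansion of $\ln\Det$ of a parameter-elliptic $\Psi$DO (in the sense recalled before (\ref{E:2.31})) is determined entirely by its homogeneous symbol, hence is unaffected by the smoothing perturbation ${\mathcal K}(\lambda)$. Consequently $\ln\Det(Q_D(\lambda)+S)$ and $\sum_{i=1}^{k_0}\ln\Det\big(\sqrt{\Delta_{Y_i}+\lambda}+\alpha_i\big)$ share the same asymptotic expansion as $|\lambda|\to\infty$; in particular they have the same constant term.

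Next I would read off that constant term component by component. Equation (\ref{E:2.58}) already computes ${\frak s}_\alpha$ as the constant term in the asymptotic expansion of $\ln\Det\big(\sqrt{\Delta_{\mathcal N}+\lambda}+\alpha\big)$; applying it with ${\mathcal N}=Y_i$ and $\alpha=\alpha_i$ shows that the constant term of $\ln\Det\big(\sqrt{\Delta_{Y_i}+\lambda}+\alpha_i\big)$ is ${\frak s}_{\alpha_i}$. Summing over the $k_0$ components and recalling that $-b_0$ is the constant term of $\ln\Det(Q_D(\lambda)+S)$, I obtain $b_0=-\sum_{i=1}^{k_0}{\frak s}_{\alpha_i}$. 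Substituting this value of $b_0$ into the formula of Theorem~\ref{Theorem:4.2} yields the displayed identity. The final two assertions are then immediate: when $\alpha_i=0$ every summand in (\ref{E:2.58}) carries a factor $\alpha_i^{k}$ with $k\ge 1$, so ${\frak s}_{\alpha_i}=0$; and when $Q_D(0)+S$ is invertible we have $p_0=\Dim\Ker(Q_D(0)+S)=0$ by (\ref{E:4.4}), so ${\frak A}$ is the empty matrix with $\ddet{\frak A}=1$ and $\ln\ddet{\frak A}$ drops out, exactly as in the last clause of Theorem~\ref{Theorem:4.2}.

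The only genuinely substantive step is the reduction modulo smoothing operators: one must be sure that adding ${\mathcal K}(\lambda)$ does not shift the constant coefficient in the $|\lambda|\to\infty$ expansion, and that the block-diagonal leading part makes $\ln\Det$ additive over the components $Y_i$. Both are exactly what the Appendix of \cite{BFK1} (together with the multiplicativity of the zeta-determinant over a direct sum) establishes for parameter-elliptic families, so once that machinery is in hand the corollary is a bookkeeping consequence of Theorem~\ref{Theorem:4.2} and the explicit evaluation (\ref{E:2.58}). I would therefore keep the argument short, citing \cite{BFK1} for the smoothing-invariance and (\ref{E:2.58}) for the model computation, rather than re-deriving either.
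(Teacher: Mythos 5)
Your proposal is correct and follows essentially the same route as the paper: both reduce the corollary to evaluating $b_{0}$ in Theorem \ref{Theorem:4.2}, use (\ref{E:4.17}) together with the fact from the Appendix of \cite{BFK1} that a smoothing perturbation does not change the $|\lambda| \rightarrow \infty$ expansion of $\ln \Det$, split the determinant over the components $Y_{i}$, and read off the constant term ${\frak s}_{\alpha_{i}}$ from (\ref{E:2.58}) to get $b_{0} = - \sum_{i=1}^{k_{0}} {\frak s}_{\alpha_{i}}$. Your handling of the two final clauses (the factor $\alpha_{i}^{k}$ with $k \geq 1$ in (\ref{E:2.58}) forcing ${\frak s}_{0} = 0$, and $p_{0} = 0$ eliminating $\ln \ddet {\frak A}$ via (\ref{E:4.4})) likewise matches the paper's reasoning.
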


\vspace{0.2 cm}

As an application of Corollary \ref{Corollary:4.5},
we are going to compute the zeta-determinant of a Laplacian on a cylinder with the Robin boundary condition
${\frak R}_{\alpha} := \partial_{x_{m}} + \alpha \Id$, where $\partial_{x_{m}}$ is the outward unit vector field on each boundary and $Z = \emptyset$. For simple presentation we assume that $\alpha > 0$.
We first consider the case that $W = [0, L]$, a line segment of length $L > 0$ with
$\Delta_{W} = - \frac{d^{2}}{du^{2}}$. One can compute that

\begin{eqnarray}    \label{E:4.18}
\ln \Det \Delta_{W, D, D} & = & \ln 2L.
\end{eqnarray}

\noindent
Simple computation shows that
$Q_{D}(\lambda) + \alpha : C^{\infty}(\{ 0 \}) \oplus C^{\infty}(\{ L \}) \rightarrow C^{\infty}(\{ 0 \}) \oplus C^{\infty}(\{ L \})$ and its determinant are given by

\begin{eqnarray}     \label{E:4.19}
& & Q_{D}(\lambda) + \alpha ~ = ~ \left( \begin{array}{clcr}
\frac{\sqrt{\lambda} ( e^{2\sqrt{\lambda}L} + 1)}{e^{2\sqrt{\lambda}L} - 1} + \alpha &
\frac{- 2 \sqrt{\lambda}e^{\sqrt{\lambda}L}}{e^{2 \sqrt{\lambda}L} - 1} \\
\frac{- 2 \sqrt{\lambda} e^{\sqrt{\lambda}L}}{e^{2\sqrt{\lambda}L} - 1} &
\frac{\sqrt{\lambda} ( e^{2\sqrt{\lambda}L} + 1)}{e^{2\sqrt{\lambda}L} - 1} + \alpha
\end{array} \right), \\
& & \ddet \left( Q_{D}(\lambda) + \alpha \right) ~ = ~
\lambda + \alpha^{2} + 2 \alpha \sqrt{\lambda} + \frac{4 \alpha \sqrt{\lambda}}{e^{2\sqrt{\lambda}L} - 1},    \nonumber
\end{eqnarray}

\noindent
which shows that $\ln \ddet \left( Q_{D}(0) + \alpha \right) = \ln \left( \frac{2\alpha}{L} + \alpha^{2} \right)$ and $b_{0} = 0$.
Since $Q_{D}(0) + \alpha$ is invertible, $\ln \ddet {\frak A}$ does not appear.
Hence, Theorem \ref{Theorem:4.2} tells that

\begin{eqnarray}    \label{E:3.20}
\ln \Det \Delta_{W, {\frak R}_{\alpha}, {\frak R}_{\alpha}} & = & \ln 2L + \ln \left( \frac{2\alpha}{L} + \alpha^{2} \right) ~ = ~ \ln \left( 2 \alpha ( L \alpha + 2 ) \right),
\end{eqnarray}

\noindent
which is computed earlier in \cite{MKB} and also can be computed by using the formula given in \cite{BFK2}.

For a closed Riemannian manifold $Y$ of dimension $m-1$ with $m \geq 2$, we let $W = [0, L] \times Y$ with the product metric and $Z = \emptyset$. Let
$\Delta_{W} = - \partial_{u}^{2} + \Delta_{Y}$, where we assume that $\Delta_{Y}$ is a non-negative operator.
It is known (Proposition 5.1 in \cite{MM}) that

\begin{eqnarray}   \label{E:4.21}
\ln \Det \Delta_{W, D, D} & = & q_{0} \ln 2L - 2L (\ln 2 -1) \cdot \Res_{s=-\frac{1}{2}} \zeta_{\Delta_{Y}}(s) +
L \cdot \Fp_{s=-\frac{1}{2}} \zeta_{\Delta_{Y}}(s) \\
& &  - \frac{1}{2} \ln \Det^{\ast} \Delta_{Y}
~ + ~ \sum_{\mu_{j} > 0} \ln \left( 1 - e^{-2 L \sqrt{\mu_{j}}} \right),   \nonumber
\end{eqnarray}

\noindent
where $q_{0} = \Dim \Ker \Delta_{Y}$. We choose the Robin boundary condition ${\frak R}_{\alpha} := \partial_{x_{m}} + \alpha \Id$ for $\alpha \in {\mathbb R}$ such that $- \alpha \notin \Spec(Q_{D}(0)) \cup \Spec(\sqrt{\Delta_{Y}})$. Here we do not assume $\alpha > 0$.
Simple computation shows that the eigenvalues of $Q_{D}(0) + \alpha : C^{\infty}(\{ 0 \} \times Y) \oplus C^{\infty}(\{ L \} \times Y) \rightarrow
C^{\infty}(\{ 0 \} \times Y) \oplus C^{\infty}(\{ L \} \times Y)$ are given by

\begin{eqnarray}   \label{E:4.22}
& & \Spec \left( Q_{D}(0) + \alpha \right) \\
& = &  \left\{ \alpha, \frac{2}{L} + \alpha \right\}  \cup
\left\{ \sqrt{\mu_{j}} + \alpha - \frac{2 \sqrt{\mu_{j}}}{e^{L \sqrt{\mu_{j}}} + 1},
\sqrt{\mu_{j}} + \alpha + \frac{2 \sqrt{\mu_{j}}}{e^{L \sqrt{\mu_{j}}} - 1}  \mid
0 < \mu_{j} \in \Spec \left( \Delta_{Y} \right) ~ \right\},    \nonumber
\end{eqnarray}

\noindent
where the multiplicities of $\alpha$ and $\frac{2}{L} + \alpha$ are $q_{0}$. It follows that

\begin{eqnarray}    \label{E:4.23}
\ln \Det \left( Q_{D}(0) + \alpha \right)  & = & 2 \ln \Det \left( \sqrt{\Delta_{Y}} + \alpha \right) + q_{0} \ln \left( 1 + \frac{2}{L \alpha}  \right)  \\
&  & + ~ \sum_{\mu_{j} > 0} \ln \left( 1 + \frac{4 \alpha \sqrt{\mu_{j}}}{\left( \sqrt{\mu_{j}} + \alpha \right)^{2}
\left( e^{2L \sqrt{\mu_{j}}} - 1 \right)} \right) \qquad (\Mod ~ 2 \pi i).  \nonumber
\end{eqnarray}

\noindent
Since $Q_{D}(0) + \alpha$ is invertible, $\ln \ddet {\frak A}$ in
Corollary \ref{Corollary:4.5} does not appear. Moreover, $Q_{D}(\lambda) + \alpha$ differs from
$\left( \begin{array}{clcr} \sqrt{\Delta_{Y} + \lambda} + \alpha & 0 \\ 0 & \sqrt{\Delta_{Y} + \lambda} + \alpha \end{array} \right)$
by a smoothing operator,
which shows that the constant term in the asymptotic expansion of $\ln \Det ( Q_{D}(\lambda) + \alpha )$ is $ 2 {\frak s}_{\alpha}$
by (\ref{E:2.58}).
Corollary \ref{Corollary:4.5} leads to the following result.

\begin{theorem}   \label{Theorem:4.6}
Let $\alpha \in {\mathbb R}$ such that $- \alpha \notin \Spec(\sqrt{\Delta_{Y}}) \cup \Spec(Q_{D}(0))$ and $W = [0, L] \times Y$ with the product metric.
Then,
\begin{eqnarray*}
& & \ln \Det \Delta_{W, {\frak R}_{\alpha}, {\frak R}_{\alpha}} ~ = ~ - 2 {\frak s}_{\alpha} + q_{0} \ln 2 \left( L + \frac{2}{\alpha} \right) - 2L (\ln 2 -1) \cdot \Res_{s=-\frac{1}{2}} \zeta_{\Delta_{Y}}(s) + L \cdot \Fp_{s=-\frac{1}{2}} \zeta_{\Delta_{Y}}(s) \\
& & \hspace{1.0 cm} + ~ 2 \ln \Det \left( \sqrt{\Delta_{Y}} + \alpha \right) - \frac{1}{2} \ln \Det^{\ast} \Delta_{Y} + ~
\sum_{\mu_{j} > 0} \ln \left( 1  -
\frac{\left( \sqrt{\mu_{j}} - \alpha \right)^{2}}{\left( \sqrt{\mu_{j}} + \alpha \right)^{2} e^{2L \sqrt{\mu_{j}}}} \right) \qquad (\Mod ~ 2 \pi i).
\end{eqnarray*}
\end{theorem}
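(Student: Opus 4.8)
The plan is to specialize Corollary \ref{Corollary:4.5} to the cylinder $W=[0,L]\times Y$ and then feed in the explicit spectral data already recorded in (\ref{E:4.21}), (\ref{E:4.22}) and (\ref{E:4.23}). The boundary of $W$ is $(\{0\}\times Y)\cup(\{L\}\times Y)$, two isometric copies of $Y$, so in the language of Corollary \ref{Corollary:4.5} we have $k_0=2$ with $\alpha_1=\alpha_2=\alpha$, whence $\sum_i{\frak s}_{\alpha_i}=2{\frak s}_\alpha$. First I would observe that the hypothesis $-\alpha\notin\Spec(Q_D(0))$ says exactly that $0\notin\Spec(Q_D(0)+\alpha)$, i.e.\ $Q_D(0)+\alpha$ is invertible; hence the term $\ln\ddet{\frak A}$ in Corollary \ref{Corollary:4.5} drops out and every $\Det^{\ast}$ may be written as $\Det$. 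Since $Q_D(\lambda)+\alpha$ differs from $\bigl(\sqrt{\Delta_Y+\lambda}+\alpha\bigr)\oplus\bigl(\sqrt{\Delta_Y+\lambda}+\alpha\bigr)$ by a smoothing operator, its constant term is $2{\frak s}_\alpha$ by (\ref{E:2.58}), so $b_0=-2{\frak s}_\alpha$, and Corollary \ref{Corollary:4.5} reduces to
\[
\ln\Det\Delta_{W,{\frak R}_\alpha,{\frak R}_\alpha}=\ln\Det\Delta_{W,D,D}-2{\frak s}_\alpha+\ln\Det\bigl(Q_D(0)+\alpha\bigr).
\]

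The second step is pure bookkeeping: substitute (\ref{E:4.21}) for $\ln\Det\Delta_{W,D,D}$ and (\ref{E:4.23}) for $\ln\Det(Q_D(0)+\alpha)$. The two logarithmic $q_0$-terms then combine through
\[
q_0\ln 2L+q_0\ln\Bigl(1+\tfrac{2}{L\alpha}\Bigr)=q_0\ln 2\Bigl(L+\tfrac{2}{\alpha}\Bigr),
\]
while the residue, finite-part, $\tfrac12\ln\Det^{\ast}\Delta_Y$ and $2\ln\Det(\sqrt{\Delta_Y}+\alpha)$ contributions are carried over verbatim from (\ref{E:4.21}) and (\ref{E:4.23}).

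The only genuine computation is the merger of the two infinite series over $0<\mu_j\in\Spec(\Delta_Y)$. Writing $s=\sqrt{\mu_j}$ and $p=e^{2L\sqrt{\mu_j}}$, I would verify the elementary identity
\[
\bigl(1-p^{-1}\bigr)\Bigl(1+\frac{4\alpha s}{(s+\alpha)^2(p-1)}\Bigr)=1-\frac{(s-\alpha)^2}{(s+\alpha)^2\,p},
\]
which follows at once from $(s+\alpha)^2(p-1)+4\alpha s=(s+\alpha)^2p-(s-\alpha)^2$. Taking logarithms and summing over $\mu_j>0$ then fuses the series $\sum_{\mu_j>0}\ln(1-e^{-2L\sqrt{\mu_j}})$ of (\ref{E:4.21}) with the residual series of (\ref{E:4.23}) into the single series $\sum_{\mu_j>0}\ln\bigl(1-\frac{(\sqrt{\mu_j}-\alpha)^2}{(\sqrt{\mu_j}+\alpha)^2e^{2L\sqrt{\mu_j}}}\bigr)$ of the statement; as each summand is $O(e^{-2L\sqrt{\mu_j}})$, absolute convergence justifies the term-by-term rearrangement.

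I expect no serious obstacle once Corollary \ref{Corollary:4.5} and the spectrum (\ref{E:4.22}) are in hand: the argument is a substitution followed by the single algebraic identity above. The one point needing care is the treatment of the $q_0=\Dim\Ker\Delta_Y$ zero modes of $\Delta_Y$, whose contribution $q_0\ln\alpha+q_0\ln(\tfrac{2}{L}+\alpha)$ to $\ln\Det(Q_D(0)+\alpha)$ must be partitioned correctly between the factor $2\ln\Det(\sqrt{\Delta_Y}+\alpha)$ (which absorbs $2q_0\ln\alpha$ on $\Ker\Delta_Y$) and the term $q_0\ln(1+\tfrac{2}{L\alpha})$, so that no zero-mode contribution is double-counted when these are combined with the Dirichlet determinant (\ref{E:4.21}).
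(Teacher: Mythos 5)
Your proposal is correct and takes essentially the same route as the paper's own derivation: specialize Corollary \ref{Corollary:4.5} with $\ln \ddet {\frak A}$ absent because $-\alpha \notin \Spec(Q_{D}(0))$ makes $Q_{D}(0)+\alpha$ invertible, identify the constant term of $\ln \Det (Q_{D}(\lambda)+\alpha)$ as $2{\frak s}_{\alpha}$ via the smoothing-operator comparison with $\sqrt{\Delta_{Y}+\lambda}+\alpha$ on the two boundary copies of $Y$, substitute (\ref{E:4.21}) and (\ref{E:4.23}), and fuse the series through the identity $(1-p^{-1})\bigl(1+\tfrac{4\alpha s}{(s+\alpha)^{2}(p-1)}\bigr)=1-\tfrac{(s-\alpha)^{2}}{(s+\alpha)^{2}p}$. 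Your zero-mode bookkeeping $q_{0}\ln\alpha+q_{0}\ln\bigl(\tfrac{2}{L}+\alpha\bigr)=2q_{0}\ln\alpha+q_{0}\ln\bigl(1+\tfrac{2}{L\alpha}\bigr)$ reproduces exactly the split the paper makes in passing from the spectrum (\ref{E:4.22}) to (\ref{E:4.23}).
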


\vspace{0.2 cm}
\noindent
{\it Remark} : If $\alpha = 0$, it follows by (\ref{E:4.22}) that $\ln \Det^{\ast} Q_{D}(0) = q_{0} \ln \frac{2}{L} + \ln \Det^{\ast} \Delta_{Y}$.
Let $\{ w_{1}, \cdots, w_{q_{0}} \}$ be an orthonormal basis of $\Ker \Delta_{Y}$.
Then, $\{ \frac{1}{\sqrt{L}} w_{1}, \cdots, \frac{1}{\sqrt{L}} w_{q_{0}} \}$ is an orthonormal basis of $\Ker \Delta_{W, N, N}$.
Since $\partial W = (\{ 0 \} \times Y) \cup (\{ L \} \times Y)$, it follows that ${\frak A} = \frac{2}{L} \Id_{q_{0} \times q_{0}}$.
Since ${\frak s}_{\alpha} = 0$, Corollary \ref{Corollary:4.5} shows that

\begin{eqnarray}     \label{E:4.24}
\ln \Det^{\ast} \Delta_{W, N, N} & = & q_{0} \ln 2L - 2L (\ln 2 -1) \cdot \Res_{s=-\frac{1}{2}} \zeta_{\Delta_{Y}}(s) +
L \cdot \Fp_{s=-\frac{1}{2}} \zeta_{\Delta_{Y}}(s) \\
& &  + \frac{1}{2} \ln \Det^{\ast} \Delta_{Y}
~ + ~ \sum_{\mu_{j} > 0} \ln \left( 1 - e^{-2 L \sqrt{\mu_{j}}} \right).   \nonumber
\end{eqnarray}

\noindent
Since $\Delta_{W, N, N}$ is a non-negative operator, we have no ambiguity of an imaginary part in logarithm.

\vspace{0.2 cm}
We next compute $\ln \Det \Delta_{W, N, D}$, where $\Delta_{W, N, D}$ is the Laplacian on $W = [0, L] \times Y$ with the Neumann condition on $\{ 0 \} \times Y$ and
the Dirichlet condition on $\{ L \} \times Y$. Here $Z = \{ L \} \times Y$ with the Dirichlet condition in our setting.
Corollary \ref{Corollary:4.5} shows that

\begin{eqnarray}   \label{E:4.25}
\ln \Det \Delta_{W, N, D} & = & \Det \Delta_{W, D, D} + \ln \Det Q_{D}(0),
\end{eqnarray}

\noindent
where $Q_{D}(0) : C^{\infty}(\{ 0 \} \times Y) \rightarrow C^{\infty}(\{ 0 \} \times Y)$ and its spectrum is given by

\begin{eqnarray}    \label{E:4.26}
\Spec \left( Q_{D}(0) \right) & = & \left\{ \frac{1}{L} \right\} \cup \left\{ \sqrt{\mu_{k}} \left( 1 + \frac{2}{e^{2L \sqrt{\mu_{k}}} - 1} \right) \mid 0 < \mu_{k} \in \Spec \left( \Delta_{Y} \right) \right\} ,
\end{eqnarray}

\noindent
where the multiplicity of $\frac{1}{L}$ is $q_{0}$.
This shows that

\begin{eqnarray}     \label{E:4.27}
& & \ln \Det \Delta_{W, N, D}  \\
& = & q_{0} \ln 2 - 2L (\ln 2 - 1) \cdot \Res_{s=-\frac{1}{2}} \zeta_{\Delta_{Y}}(s) +
L \cdot \Fp_{s=-\frac{1}{2}} \zeta_{\Delta_{Y}}(s) + \sum_{\mu_{j} > 0} \ln \left( 1 + e^{-2L\sqrt{\mu_{j}}} \right).  \nonumber
\end{eqnarray}

\noindent
{\it Remark} : $Q_{D}(0)$'s in (\ref{E:4.22}) and (\ref{E:4.26}) are different operators. In fact, $Q_{D}(0)$ in (\ref{E:4.22}) is defined on
$C^{\infty}(\{ 0 \} \times Y) \oplus C^{\infty}(\{ L \} \times Y)$  and $Q_{D}(0)$ in (\ref{E:4.26}) is defined on
$C^{\infty}(\{ 0 \} \times Y)$.

\vspace{0.2 cm}
Using Corollary \ref{Corollary:4.5} again with $Z = \{ 0 \} \times Y$ imposed by the Neumann condition and (\ref{E:4.27}), we obtain the following equality.

\begin{eqnarray}   \label{E:4.28}
\ln \Det \Delta_{W, N, {\frak R}_{\alpha}} & = & \ln \Det \Delta_{W, N, D} - {\frak s}_{\alpha} + \ln \Det (Q_{D}(0) + \alpha),
\end{eqnarray}

\noindent
where $Q_{D}(0) + \alpha$ is defined on $C^{\infty}(\{ L \} \times Y)$ and its spectrum is

\begin{eqnarray}   \label{E:4.29}
\Spec \left( Q_{D}(0) + \alpha \right) & = & \left\{ \sqrt{\mu_{k}} + \alpha - \frac{2 \sqrt{\mu_{k}}}{e^{2L \sqrt{\mu_{k}}} + 1}  \mid \mu_{k} \in \Spec \left( \Delta_{Y} \right) \right\}.
\end{eqnarray}

\noindent
When $- \alpha \notin \Spec(Q_{D}(0)) \cup \Spec(\sqrt{\Delta_{Y}})$,
the zeta-determinant $\ln \Det \Delta_{W, N, {\frak R}_{\alpha}}$ is given by

\begin{eqnarray}   \label{E:4.30}
\ln \Det \Delta_{W, N, {\frak R}_{\alpha}} & = & - {\frak s}_{\alpha} +
q_{0} \ln 2 + \ln \Det \big( \sqrt{\Delta_{Y}} + \alpha \big)  - 2L (\ln 2 - 1) \cdot \Res_{s=-\frac{1}{2}} \zeta_{\Delta_{Y}}(s)  \\
& & + ~ L \cdot \Fp_{s=-\frac{1}{2}} \zeta_{\Delta_{Y}}(s)
+ \sum_{\mu_{j} > 0} \ln \bigg( 1 - \frac{\sqrt{\mu_{j}} - \alpha}{(\sqrt{\mu_{j}} + \alpha)e^{2L\sqrt{\mu_{j}}}}  \bigg) \qquad (\Mod ~ 2 \pi i). \nonumber
\end{eqnarray}

\vspace{0.2 cm}

In the remaining part of this section, we give an example showing Theorem \ref{Theorem:3.1} and Theorem \ref{Theorem:2.16}.
Let $M = [0, L] \times Y$ with the product metric as before.
We impose the Neumann boundary condition on $\partial M = \{ 0, L \} \times Y$.
For $0 < a < L$, we cut $M$ into two pieces $M_{1} = [0, a] \times Y$ and $M_{2} = [a, L] \times Y$
with ${\mathcal N} = \{ a \} \times Y$.
For simple computation, we assume that $\sqrt{\Delta_{Y}}$ is non-negative and
 $\alpha \notin \Spec(\pm \sqrt{\Delta_{Y}})$ so that both
$\sqrt{\Delta_{Y}} + \alpha$ and $\sqrt{\Delta_{Y}} - \alpha$ are invertible operators.
Let ${\frak R}^{\alpha}_{1} = \nabla_{\partial_{\mathcal N}} + \alpha \Id$ and ${\frak R}^{\alpha}_{2} = \nabla_{\partial_{\mathcal N}} + \alpha \Id$, where $\partial_{\mathcal N}$ is defined in Section 2.
Using the fact that ${\frak R}_{2}^{\alpha} = - {\frak R}_{1}^{-\alpha}$ with (\ref{E:4.24}) and (\ref{E:4.30}), it follows that

\begin{eqnarray}    \label{E:4.31}
& & \ln \Det^{\ast} \Delta_{M, N, N} - \ln \Det \Delta_{M_{1}, N, {\frak R}_{1}^{\alpha}} -
\ln \Det \Delta_{M_{2}, {\frak R}_{2}^{\alpha}, N}   \\
& = & ({\frak s}_{\alpha} + {\frak s}_{-\alpha}) + q_{0} \ln \frac{L}{2} + \frac{1}{2} \ln \Det^{\ast} \Delta_{Y} -
\ln \Det ( \sqrt{\Delta_{Y}} + \alpha) - \ln \Det ( \sqrt{\Delta_{Y}} - \alpha)    \nonumber \\
& & + ~ \sum_{\mu_{j} > 0} \ln \left( \frac{1 - e^{-2L \sqrt{\mu_{j}}}}{\left( 1 - \frac{\sqrt{\mu_{j}} - \alpha}{(\sqrt{\mu_{j}} + \alpha) e^{2a\sqrt{\mu_{j}}}} \right)\left( 1 - \frac{\sqrt{\mu_{j}} + \alpha}{(\sqrt{\mu_{j}} - \alpha) e^{2(L-a)\sqrt{\mu_{j}}}} \right)} \right)  \qquad (\Mod ~ 2 \pi i), \nonumber
\end{eqnarray}

\noindent
where ${\frak s}_{\alpha} + {\frak s}_{-\alpha}$ is given in (\ref{E:2.61}).
We next consider the right hand side of Theorem \ref{Theorem:3.1}.
Let $\{ w_{1}, \cdots, w_{q_{0}} \}$ be an orthonormal basis of $\Ker \Delta_{Y} = \Ker Q_{1}(0) = \Ker Q_{2}(0) = \Ker R_{S}(0)$, where
$Q_{0}(0)$ and $Q_{2}(0)$ are defined on $C^{\infty}(\{ a \} \times Y)$.
Then, $\{ \frac{1}{\sqrt{L}} w_{1}, \cdots, \frac{1}{\sqrt{L}} w_{q_{0}} \}$ is an orthonormal basis of $\Ker \Delta_{W, N, N}$.
Since

\begin{eqnarray}  \label{E:4.32}
(Q_{1}(0) + \alpha)^{-1} w_{j} & = & \frac{1}{\alpha} w_{j}, \qquad
\langle \frac{1}{\sqrt{L}} w_{i}, \frac{1}{\sqrt{L}} w_{j} \rangle_{Y} ~ = ~ \frac{1}{L} \delta_{ij}, \qquad
\end{eqnarray}

\noindent
the matrices ${\mathcal A}$ in (\ref{E:3.5}) and ${\mathcal C}$ in (\ref{E:3.8}) are given by

\begin{eqnarray}   \label{E:4.33}
{\mathcal A} & = & \frac{1}{\alpha} \Id_{q_{0} \times q_{0}}, \qquad  {\mathcal C} ~ = ~ \frac{1}{L} \Id_{q_{0} \times q_{0}},
\end{eqnarray}

\noindent
which shows that

\begin{eqnarray}   \label{E:4.34}
\ln \ddet {\mathcal A} \overline{{\mathcal A}^{T}} & = & - q_{0} \ln \alpha^{2}, \qquad
\ln \ddet {\mathcal C} ~ = ~ - q_{0} \ln L.
\end{eqnarray}

\noindent
Simple computation shows that

\begin{eqnarray}   \label{E:4.35}
\Spec \left( Q_{1}(0) + \alpha \right) & = & \left\{ \sqrt{\mu_{j}} + \alpha - \frac{2 \sqrt{\mu_{j}}}{e^{2a \sqrt{\mu_{j}}} + 1} \mid
\mu_{j} \in \Spec(\Delta_{Y}) \right\}, \\
\Spec \left( Q_{2}(0) - \alpha \right) & = & \left\{ \sqrt{\mu_{j}} - \alpha - \frac{2 \sqrt{\mu_{j}}}{e^{2 (L - a) \sqrt{\mu_{j}}} + 1} \mid
\mu_{j} \in \Spec(\Delta_{Y}) \right\},  \nonumber
\end{eqnarray}

\noindent
which together with Lemma \ref{Lemma:2.7} shows that the spectrum of $R_{S}(0)$ is given as follows.

\begin{eqnarray}   \label{E:4.36}
& & \hspace{5.0 cm} \Spec \left( R_{S}(0) \right) ~ = ~ \\
& & \left\{ ~ \frac{2 \sqrt{\mu_{j}}}{\mu_{j} - \alpha^{2}} ~ \frac{1 - e^{- 2L \sqrt{\mu_{j}}}}{\left( 1 - \frac{\sqrt{\mu_{j}} - \alpha}{(\sqrt{\mu_{j}} + \alpha) e^{2a\sqrt{\mu_{j}}}}\right)\left( 1 - \frac{\sqrt{\mu_{j}} + \alpha}{(\sqrt{\mu_{j}} - \alpha) e^{2(L-a)\sqrt{\mu_{j}}}}\right) } ~ \mid ~ \mu_{j} \in \Spec(\Delta_{Y}) \right\}.   \nonumber
\end{eqnarray}

\noindent
Hence, the zeta-determinant of $R_{S}(0)$ is given as follows.

\begin{eqnarray}   \label{E:4.37}
& & \hspace{3.0 cm} \ln \Det^{\ast} R_{S}(0)  \\
& = & \ln \Det^{\ast} \left( ~ \frac{2 \sqrt{\Delta_{Y}}}{\Delta_{Y} - \alpha^{2}} \right) ~ + ~ \sum_{\mu_{j} > 0} \ln \left( \frac{1 - e^{- 2L \sqrt{\mu_{j}}}}{\left( 1 - \frac{\sqrt{\mu_{j}} - \alpha}{(\sqrt{\mu_{j}} + \alpha) e^{2a\sqrt{\mu_{j}}}}\right)\left( 1 - \frac{\sqrt{\mu_{j}} + \alpha}{(\sqrt{\mu_{j}} - \alpha) e^{2(L-a)\sqrt{\mu_{j}}}}\right)}  \right) \quad (\Mod ~ 2 \pi i).     \nonumber
\end{eqnarray}

\noindent
We note that

\begin{eqnarray}   \label{E:4.100}
\ln \Det^{\ast} \left( ~ \frac{2 \sqrt{\Delta_{Y}}}{\Delta_{Y} - \alpha^{2}} \right) & = &
\ln 2 \cdot \zeta_{\frac{\sqrt{\Delta_{Y}}}{\Delta_{Y} - \alpha^{2}}}(0)
~ + ~ \ln \Det^{\ast} \frac{\sqrt{\Delta_{Y}}}{\Delta_{Y} - \alpha^{2}}  \\
& = & \ln 2 \cdot \zeta_{\big(\sqrt{\Delta_{Y}} - \alpha^{2} \sqrt{\Delta_{Y}}^{-1} \big)}(0)
~ - ~ \ln \Det^{\ast} \left(\sqrt{\Delta_{Y}} - \alpha^{2} \sqrt{\Delta_{Y}}^{-1} \right) \quad (\Mod ~ 2 \pi i),   \nonumber
\end{eqnarray}

\noindent
where $\zeta_{\big(\sqrt{\Delta_{Y}} - \alpha^{2} \sqrt{\Delta_{Y}}^{-1} \big)}(s)$ is defined for non-zero eigenvalues of $\Delta_{Y}$.
It follows from (\ref{E:2.50}) and (\ref{E:2.56}) that

\begin{eqnarray}   \label{E:4.38}
& & \zeta_{\left( \sqrt{\Delta_{Y}} - \alpha^{2} \sqrt{\Delta_{Y}}^{-1} \right)}(0) ~ = ~
\big( {\frak a}_{\frac{m-1}{2}} - q_{0} \big) +
2 \sum_{k=1}^{[\frac{m-1}{2}]} \frac{1}{k !} \cdot {\frak a}_{\frac{m-1}{2} - k} \cdot \alpha^{2k},  \\
& & \ln \Det^{\ast} \left( \sqrt{\Delta_{Y}} - \alpha^{2} \sqrt{\Delta_{Y}}^{-1} \right) ~ = ~  \ln \Det (\sqrt{\Delta_{Y}} + \alpha )
+ \ln \Det (\sqrt{\Delta_{Y}} - \alpha ) - q_{0} \ln (- \alpha^{2})  \nonumber \\
& & \hspace{1.5 cm}  - \frac{1}{2} \ln \Det^{\ast} \Delta_{Y}
+ ~ 2 \sum_{k=1}^{[\frac{m-1}{2}]} \frac{1}{k !} \cdot {\frak a}_{\frac{m-1}{2} - k} \cdot \alpha^{2k} \cdot
\left( \sum_{p=1}^{2k-1} \frac{1}{p} - \sum_{p=1}^{k-1} \frac{1}{p} \right) \quad (\Mod ~ 2 \pi i),   \nonumber
\end{eqnarray}

\noindent
where ${\frak a}_{\frac{m-1}{2} - k}$ is understood to be zero if $m-1$ is odd.
Considering $a_{0}$ given in Theorem \ref{Theorem:2.16},
the right hand side of Theorem \ref{Theorem:3.1} is given, up to $\Mod 2 \pi i$,  as follows.

\begin{eqnarray}   \label{E:4.39}
& & a_{0} +  \ln (-1)^{q_{0}}  - \ln \ddet {\mathcal C} + \ln \ddet {\mathcal A} \overline{{\mathcal A}^{T}}
+ \ln \Det^{\ast} R_{S}(0)  \\
& = & a_{0} + \ln (-1)^{q_{0}} + q_{0} \ln L - q_{0} \ln \alpha^{2}
~ + ~ \ln 2 \cdot \bigg( \big( {\frak a}_{\frac{m-1}{2}} - q_{0} \big) + 2 \sum_{k=1}^{[\frac{m-1}{2}]} \frac{{\frak a}_{\frac{m-1}{2} - k} ~ \alpha^{2k}}{k !} \bigg)   \nonumber \\
& & - \ln \Det (\sqrt{\Delta_{Y}} + \alpha ) - \ln \Det (\sqrt{\Delta_{Y}} - \alpha ) + q_{0} \ln (- \alpha^{2}) +
\frac{1}{2} \ln \Det^{\ast} \Delta_{Y}  \nonumber \\
& & - 2 \sum_{k=1}^{[\frac{m-1}{2}]} \frac{\alpha^{2k} {\frak a}_{\frac{m-1}{2} - k}}{k !}
\left( \sum_{p=1}^{2k-1} \frac{1}{p} - \sum_{p=1}^{k-1} \frac{1}{p} \right)
 +  \sum_{\mu_{j} > 0} \ln \left( \frac{1 - e^{- 2L \sqrt{\mu_{j}}}}{\left( 1 - \frac{\sqrt{\mu_{j}} - \alpha}{(\sqrt{\mu_{j}} + \alpha) e^{2a\sqrt{\mu_{j}}}}\right)\left( 1 - \frac{\sqrt{\mu_{j}} + \alpha}{(\sqrt{\mu_{j}} - \alpha) e^{2(L-a)\sqrt{\mu_{j}}}}\right)}  \right)   \nonumber \\
& = & - q_{0} \ln 2 + q_{0} \ln L - \ln \Det (\sqrt{\Delta_{Y}} + \alpha ) - \ln \Det (\sqrt{\Delta_{Y}} - \alpha ) + \frac{1}{2} \ln \Det^{\ast} \Delta_{Y}   \nonumber \\
& & - \sum_{k=1}^{[\frac{m-1}{2}]} \frac{\alpha^{2k} {\frak a}_{\frac{m-1}{2} - k}}{k !}
\left( 2 \sum_{p=1}^{2k-1} \frac{1}{p} - \sum_{p=1}^{k-1} \frac{1}{p} \right)
 +  \sum_{\mu_{j} > 0} \ln \left( \frac{1 - e^{- 2L \sqrt{\mu_{j}}}}{\left( 1 - \frac{\sqrt{\mu_{j}} - \alpha}{(\sqrt{\mu_{j}} + \alpha) e^{2a\sqrt{\mu_{j}}}}\right)\left( 1 - \frac{\sqrt{\mu_{j}} + \alpha}{(\sqrt{\mu_{j}} - \alpha) e^{2(L-a)\sqrt{\mu_{j}}}}\right)}  \right).   \nonumber
\end{eqnarray}

\noindent
This agrees with (\ref{E:4.31}).

\vspace{0.3 cm}

\section{Neumann Boundary Condition}

We keep the same notations and assumptions as in Theorem \ref{Theorem:2.11}.
If $\partial M \neq \emptyset$, we impose the Dirichlet or Neumann boundary condition $B$ in $\partial M$ as before.
When $S = 0$, then ${\frak R}_{i}$'s in Definition \ref{Definition:2.1} are reduced to the Neumann boundary condition
$N$.
In this case, to emphasize the Neumann boundary condition and $S = 0$, we write $R_{\Neu}(\lambda)$ and ${\mathcal W}(\lambda)$ rather than $R_{S}(\lambda)$, ${\mathcal W}_{S}(\lambda)$
so that by Lemma \ref{Lemma:2.7} $R_{\Neu}(\lambda)$ is given by

\begin{eqnarray}   \label{E:5.1}
R_{\Neu}(\lambda) & = & \delta_{d} \cdot {\mathcal W}(\lambda)^{-1} \cdot \iota.
\end{eqnarray}

\noindent
Theorem \ref{Theorem:2.11} is rewritten as follows.

\begin{eqnarray}   \label{E:5.2}
 \ln \Det \left( \Delta_{M, B} + \lambda \right) - \ln \Det \left( \Delta_{M_{0}, N, N, B} + \lambda \right)
& = & \sum_{j=0}^{[\frac{m-1}{2}]} a_{j} \lambda^{j} + \ln \Det R_{\Neu}(\lambda),
\end{eqnarray}

\noindent
where $- a_{0}$ is the constant term in the asymptotic expansion of $\ln \Det R_{\Neu}(\lambda)$ for $|\lambda| \rightarrow \infty$.
In fact, $a_{0}$ is computed by the formula given in (\ref{E:2.34}). Hence, if $\Dim M$ is even, then $a_{0} = 0$. If $M$ has a product structure on a collar neighborhood of ${\mathcal N}$, then, $a_{0}  =  - \ln 2 \cdot \left( \zeta_{\Delta_{{\mathcal N}}}(0) + \Dim \Ker \Delta_{{\mathcal N}} \right)$ by Theorem \ref{Theorem:2.16}.

We next discuss (\ref{E:5.2}) when $\lambda \rightarrow 0^{+}$. If all three operators in (\ref{E:5.2}) are invertible, we can simply put $\lambda = 0$ in (\ref{E:5.2}).
However, if $\Delta_{M_{0}, N, N, B}$ is not invertible, $R_{\Neu}(0)$ is not well defined.
To avoid this difficulty, we are going to make the following assumption.

\vspace{0.2 cm}
\noindent
{\bf Assumption A} We assume that $M_{0}$ has at least two components $M_{1}$ and $M_{2}$ so that $M_{0} = M_{1} \cup M_{2}$ such that

\begin{eqnarray*}
\Ker Q_{1}(0) ~ = ~ \Ker Q_{2}(0) ~ = ~ \Ker R_{\DN}(0),
\end{eqnarray*}

\noindent
where in this case $V_{1}(\lambda) = V_{2}(\lambda) = 0$.

\vspace{0.2 cm}
\noindent
{\it Remark}:
(1) If $M$ is a closed manifold and $\Delta_{M}$ is the scalar Laplacian, then the Assumption A is satisfied since the kernel of $Q_{i}(0)$ consists of constant functions.

\noindent
(2) If $M_{1} = M_{2}$ and $M$ is a double of $M_{1}$,
{\it i.e.} $M = M_{1} \cup_{Y} M_{1}$, then $\Delta_{M}$ satisfies the Assumption A since $Q_{1}(0) = Q_{2}(0)$.

\vspace{0.2 cm}
\noindent
Under the Assumption A, we define $Q_{i}(0)^{-1}$ and $R_{\DN}(0)^{-1}$ on the orthogonal complement of $\Ker Q_{i}(0)$ so that
$R_{\Neu}(0)^{-1} = Q_{1}(0) R_{\DN}(0)^{-1} Q_{2}(0)$ is well defined.

\vspace{0.2 cm}

In the remaining part of this section, we assume the Assumption A and let $\ell_{0} = \Dim \Ker Q_{1}(0)$. Then,

\begin{eqnarray}   \label{E:5.3}
\Dim \Ker \Delta_{M, B} ~ = ~ \Dim \Ker \Delta_{M_{1},N, B}  ~ = ~ \Dim \Ker \Delta_{M_{2},N, B} ~ = ~ \ell_{0}.
\end{eqnarray}

\noindent
Hence, we have

\begin{eqnarray}   \label{E:5.4}
& & \ln \Det \left( \Delta_{M, B} + \lambda \right) ~ - ~ \ln \Det \left( \Delta_{M_{1}, N, B} + \lambda \right) ~ - ~ \ln \Det \left( \Delta_{M_{2}, N, B} + \lambda \right)   \\
& = & \left\{  \ell_{0} \ln \lambda ~ + ~ \ln \Det^{\ast} \Delta_{M, B} ~ + ~ O(\lambda) \right\} ~ - ~
\left\{  2 \ell_{0} \ln \lambda ~ + ~ \ln \Det^{\ast} \Delta_{M_{1}, N, B} ~ + ~ \ln \Det^{\ast} \Delta_{M_{2}, N, B} ~ + ~ O(\lambda) \right\}  \nonumber \\
& = & -  \ell_{0} \ln \lambda ~ + ~ \left\{ \ln \Det^{\ast} \Delta_{M, B} ~ - ~  \ln \Det^{\ast} \Delta_{M_{1}, N, B} ~ - ~ \ln \Det^{\ast} \Delta_{M_{2}, N, B} ~ \right\} ~ + ~ O(\lambda).   \nonumber
\end{eqnarray}

We denote the holomorphic families of eigenvalues and corresponding eigensections of $R_{\Neu}(\lambda)^{-1}$ by
$\{ \rho_{j}(\lambda) \mid j = 1, 2, \cdots \}$ and $\{ \varphi_{j}(\lambda) \mid j = 1, 2, \cdots \}$, where
$\rho_{1}(0) = \cdots = \rho_{\ell_{0}}(0) = 0$ and $\{ \varphi_{1}(0), \cdots, \varphi_{\ell_{0}}(0) \}$ is an orthonormal basis of
$\Ker R_{\Neu}(0)^{-1}$.
Then, we get

\begin{eqnarray}   \label{E:5.5}
\ln \Det R_{\Neu}(\lambda) & = & - \ln \rho_{1}(\lambda) \cdots \rho_{\ell_{0}}(\lambda)
~ + ~ \ln \Det^{\ast} R_{\Neu}(0) ~ + ~ O(\lambda).
\end{eqnarray}

\noindent
Similarly, we denote by $\{ \tau_{j}(\lambda) \mid j \in {\mathbb N} \}$, $\{ \omega_{j}(\lambda) \mid j \in {\mathbb N} \}$
the holomorphic families of eigenvalues and corresponding eigensections of $R_{\DN}(\lambda)$ and by
$\{ \eta^{(i)}_{j}(\lambda) \mid j \in {\mathbb N} \}$, $\{ \phi^{(i)}_{j}(\lambda) \mid j \in {\mathbb N} \}$
the eigenvalues and eigensections of $Q_{i}(\lambda)$, where
$\tau_{1}(0) = \cdots = \tau_{\ell_{0}}(0) = 0$, $\eta^{(i)}_{1}(0) = \cdots \eta^{(i)}_{\ell_{0}}(0) = 0$, and
$\{ \omega_{1}(0), \cdots, \omega_{\ell_{0}}(0) \}$, $\{\phi^{(i)}_{1}, \cdots, \phi^{(i)}_{\ell_{0}} \}$ are orthonormal bases of $\Ker R_{\DN}(0) = \Ker Q_{i}(0)$.
Hence, $\Span \{ \varphi_{1}(0), \cdots, \varphi_{\ell_{0}}(0) \}~ = \Span~~\{ \omega_{1}(0), \cdots, \omega_{\ell_{0}}(0) \} ~ = ~
\Span \{\phi^{(i)}_{1}, \cdots, \phi^{(i)}_{\ell_{0}} \}$.
We define $\ell_{0} \times \ell_{0}$ unitary matrices ${\mathcal K}^{(i)} = \left( {\mathcal K}_{jk}^{(i)} \right)$,
${\mathcal L}^{(i)} = \left( {\mathcal L}_{jk}^{(i)} \right)$ and ${\mathcal M}^{(i)} = \left( {\mathcal M}_{jk}^{(i)} \right)$ as follows.

\begin{eqnarray}    \label{E:5.6}
\varphi_{j}(0) ~ = ~ \sum_{k=1}^{\ell_{0}} {\mathcal K}_{jk}^{(i)} \phi_{k}^{(i)}(0), \qquad
\phi_{j}^{(i)}(0) ~ = ~ \sum_{k=1}^{\ell_{0}} {\mathcal L}_{jk}^{(i)} \omega_{k}(0), \qquad
\omega_{j}(0) ~ = ~ \sum_{k=1}^{\ell_{0}} {\mathcal M}_{jk}^{(i)} \phi_{k}^{(i)}(0).
\end{eqnarray}

\noindent
It follows from the definition that

\begin{eqnarray}    \label{E:5.7}
{\mathcal K}^{(1)} {\mathcal L}^{(1)} = {\mathcal K}^{(2)} {\mathcal L}^{(2)}, \qquad
{\mathcal L}^{(1)} {\mathcal M}^{(1)} =  {\mathcal L}^{(2)} {\mathcal M}^{(2)} = \Id.
\end{eqnarray}

\noindent
Since $\rho_{j}(\lambda)$ is holomorphic with
$\rho_{j}(0) = 0$ for $1 \leq j \leq \ell_{0}$, we have $\lim_{\lambda \rightarrow 0} \frac{\rho_{j}(\lambda)}{\lambda} = \rho_{j}^{\prime}(0)$.
The same property holds for $\tau_{j}(\lambda)$, $\eta^{(i)}_{j}(\lambda)$.
For two one parameter families $f(\lambda)$ and $g(\lambda)$ of sections, we define an equivalence relation "$\approx$" as follows.

\begin{eqnarray*}
f(\lambda) \approx g(\lambda) \quad \text{if and only if} \quad
\lim_{\lambda \rightarrow 0} \left( f(\lambda) - g(\lambda) \right) = 0.
\end{eqnarray*}

\noindent
For $1 \leq j, k \leq \ell_{0}$, we have

\begin{eqnarray}   \label{E:5.8}
 \frac{\rho_{j}(\lambda)}{\lambda} \delta_{jk} & = & \frac{1}{\lambda}
\bigg\langle Q_{1}(\lambda) ~ R_{\DN}(\lambda)^{-1} ~ Q_{2}(\lambda) \varphi_{j}(\lambda), ~ \varphi_{k}(0) \bigg\rangle  \\
& = & \frac{1}{\lambda}
\bigg\langle Q_{1}(\lambda) ~ R_{\DN}(\lambda)^{-1} ~ Q_{2}(\lambda) \left( \sum_{a=1}^{\ell_{0}} {\mathcal K}_{ja}^{(2)} \phi^{(2)}_{a}(\lambda) +  \varphi_{j}(\lambda) - \sum_{a=1}^{\ell_{0}} {\mathcal K}_{ja}^{(2)} \phi^{(2)}_{a}(\lambda) \right), ~ \varphi_{k}(0) \bigg\rangle  \nonumber \\
& = & \sum_{a=1}^{\ell_{0}} {\mathcal K}_{ja}^{(2)} \frac{\eta^{(2)}_{a}(\lambda)}{\lambda} ~ \bigg\langle Q_{1}(\lambda) ~ R_{\DN}(\lambda)^{-1} ~ \phi^{(2)}_{a}(\lambda), ~ \varphi_{k}(0) \bigg\rangle ~ + ~ C(\lambda),  \nonumber
\end{eqnarray}

\noindent
where for $\lambda \rightarrow 0$

\begin{eqnarray}    \label{E:5.9}
\lim_{\lambda \rightarrow 0} C(\lambda) & = & \lim_{\lambda \rightarrow 0} \frac{1}{\lambda} ~
\bigg\langle R_{\Neu}(\lambda)^{-1}
\left( \varphi_{j}(\lambda) - \sum_{a=1}^{\ell_{0}} {\mathcal K}_{ja}^{(2)} \phi^{(2)}_{a}(\lambda) \right), ~ \varphi_{k}(0) \bigg\rangle \\
& = & \lim_{\lambda \rightarrow 0} ~
\bigg\langle \varphi_{j}(\lambda) - \sum_{a=1}^{\ell_{0}} {\mathcal K}_{ja}^{(2)} \phi^{(2)}_{a}(\lambda),
~ \frac{R_{\Neu}(\overline{\lambda})^{-1} - R_{\Neu}(0)^{-1}}{\overline{\lambda}} ~ \varphi_{k}(0) \bigg\rangle   \nonumber \\
& = & \bigg\langle \varphi_{j}(0) - \sum_{a=1}^{\ell_{0}} {\mathcal K}_{ja}^{(2)} \phi^{(2)}_{a}(0),
~ \bigg(\frac{d}{d\lambda}\big|_{\lambda=0} R_{\Neu}(\lambda)^{-1} \bigg) ~ \varphi_{k}(0) \bigg\rangle
 ~ = ~  0.    \nonumber
\end{eqnarray}

\noindent
Using the similar method, we have

\begin{eqnarray}    \label{E:5.10}
 \frac{\rho_{j}(\lambda)}{\lambda} \delta_{jk} & \approx & \sum_{a=1}^{\ell_{0}} {\mathcal K}_{ja}^{(2)} \frac{\eta^{(2)}_{a}(\lambda)}{\lambda} ~ \bigg\langle Q_{1}(\lambda) ~ R_{\DN}(\lambda)^{-1} ~ \phi^{(2)}_{a}(\lambda), ~ \varphi_{k}(0) \bigg\rangle,   \\
& = & \sum_{a=1}^{\ell_{0}} {\mathcal K}_{ja}^{(2)} \frac{\eta^{(2)}_{a}(\lambda)}{\lambda} ~ \bigg\langle Q_{1}(\lambda) ~ R_{\DN}(\lambda)^{-1} ~ \left( \sum_{b=1}^{\ell_{0}} {\mathcal L}_{ab}^{(2)} \omega_{b}(\lambda) + \phi^{(2)}_{a}(\lambda) - \sum_{b=1}^{\ell_{0}} {\mathcal L}_{ab}^{(2)} \omega_{b}(\lambda) \right), ~ \varphi_{k}(0) \bigg\rangle,  \nonumber  \\
& \approx & \sum_{a, b=1}^{\ell_{0}} {\mathcal K}_{ja}^{(2)} \frac{\eta^{(2)}_{a}(\lambda)}{\lambda} ~ {\mathcal L}_{ab}^{(2)} \frac{1}{\tau_{b}(\lambda)} ~
\bigg\langle Q_{1}(\lambda) ~ \omega_{b}(\lambda), ~ ~ \varphi_{k}(0) \bigg\rangle     \nonumber\\
& \approx & \sum_{a, b=1}^{\ell_{0}} {\mathcal K}_{ja}^{(2)} \frac{\eta^{(2)}_{a}(\lambda)}{\lambda} ~ {\mathcal L}_{ab}^{(2)} \frac{1}{\tau_{b}(\lambda)} ~
\bigg\langle Q_{1}(\lambda) ~ \left( \sum_{c=1}^{\ell_{0}} {\mathcal M}^{(1)}_{bc} \phi^{(1)}_{c}(\lambda) +  \omega_{b}(\lambda) - \sum_{c=1}^{\ell_{0}} {\mathcal M}^{(1)}_{bc} \phi^{(1)}_{c}(\lambda)  \right), ~ ~ \varphi_{k}(0) \bigg\rangle     \nonumber \\
& \approx & \sum_{a, b, c=1}^{\ell_{0}} \left( {\mathcal K}_{ja}^{(2)} \frac{\eta^{(2)}_{a}(\lambda)}{\lambda} \right) ~ \left( {\mathcal L}_{ab}^{(2)} \frac{\lambda}{\tau_{b}(\lambda)} \right) ~
\left( {\mathcal M}^{(1)}_{bc} \frac{\eta_{c}^{(1)}(\lambda)}{\lambda} \right)
~ \overline{{\mathcal K}^{(1)}_{kc}},     \nonumber
\end{eqnarray}

\noindent
where we used that $\lim_{\lambda \rightarrow 0} \frac{\lambda}{\tau_{b}(\lambda)}$ exists and nonzero by
(\ref{E:3.9}).
We denote by $~\{ \psi^{(i)}_{1}, \cdots, \psi^{(i)}_{\ell_{0}} \}~$ an orthonormal basis of $~\Ker \Delta_{M_{i}, N}~$ for $i = 1, 2$.
For an orthonormal basis
$~\{ \phi_{1}^{(i)}(0), \cdots, \phi_{\ell_{0}}^{(i)}(0) \}~$ of $\Ker  Q_{i}(0)$, we choose sections $\Phi_{1}^{(i)}(0), \cdots, \Phi_{\ell_{0}}^{(i)}(0) $ on $M_{i}$ such that

\begin{eqnarray}    \label{E:5.11}
\Delta_{M_{i}} \Phi_{j}^{(i)}(0) = 0, \qquad \Phi_{j}^{(i)}(0)\big|_{{\mathcal N}} = \phi_{j}^{(i)}(0), \qquad  B(\Phi_{j}^{(i)}(0)) = 0.
\end{eqnarray}

\noindent
We define matrices ${\mathcal R}^{(i)} = \left( {\mathcal R}^{(i)}_{ij} \right)$ and ${\mathcal S}^{(i)} = \left( {\mathcal S}^{(i)}_{ij} \right)$
as follows:

\begin{eqnarray}    \label{E:5.12}
{\mathcal R}^{(i)}_{jk} ~ = ~ \big\langle \Phi_{j}^{(i)}(0), ~ \psi_{k}^{(i)} \big\rangle_{M_{i}}, \qquad
{\mathcal S}^{(i)}_{jk} ~ = ~ \big\langle \psi_{j}^{(i)}|_{{\mathcal N}}, ~ \psi_{k}^{(i)}|_{{\mathcal N}} \big\rangle_{{\mathcal N}}.
\end{eqnarray}

\noindent
It is not difficult to see that

\begin{eqnarray}    \label{E:5.13}
\ddet \big( {\mathcal R}^{(i)} \overline{{\mathcal R}^{(i) T}} \big) & = & \frac{1}{\ddet {\mathcal S}^{(i)}}.
\end{eqnarray}

\noindent
Similar computation as (\ref{E:4.12}) shows (cf. \cite{KL5}) that

\begin{eqnarray}    \label{E:5.14}
\lim_{\lambda \rightarrow 0} \frac{\eta_{j}^{(i)}(\lambda)}{\lambda} \delta_{jk} ~ = ~
\langle \Phi_{j}^{(i)}(0), ~ \Phi_{k}^{(i)}(0) \rangle  ~ = ~
\left( {\mathcal R}^{(i)} \overline{{\mathcal R}^{(i) T}} \right)_{jk}.
\end{eqnarray}

\noindent
Since $\lim_{\lambda \rightarrow 0} \frac{\lambda}{\tau_{b}(\lambda)} ~ = ~
(\overline{{\mathcal B}^{T}}{\mathcal B})_{bb}$ by (\ref{E:3.9}), we have

\begin{eqnarray}    \label{E:5.15}
\lim_{\lambda \rightarrow 0} \frac{\rho_{j}(\lambda)}{\lambda} \delta_{jk} & = &
\sum_{a, b, c=1}^{\ell_{0}} \left( {\mathcal K}_{ja}^{(2)} \left( {\mathcal R}^{(2)} \overline{{\mathcal R}^{(2)T}} \right)_{aa} \right) ~
\left( {\mathcal L}_{ab}^{(2)} \left(\overline{{\mathcal B}^{T}} {\mathcal B} \right)_{bb} \right) ~
\left( {\mathcal M}^{(1)}_{bc} \left( {\mathcal R}^{(1)} \overline{{\mathcal R}^{(1)T}} \right)_{cc} \right)
~ \overline{{\mathcal K}^{(1)}_{kc}}     \nonumber \\
& = & \left( {\mathcal K}^{(2)} {\mathcal R}^{(2)} \overline{{\mathcal R}^{(2)T}} ~
 {\mathcal L}^{(2)} \left({\mathcal B}^{T} {\mathcal B} \right) ~
 {\mathcal M}^{(1)}  {\mathcal R}^{(1)} \overline{{\mathcal R}^{(1)T}}~ \overline{{\mathcal K}^{(1) T}} \right)_{jk} .
\end{eqnarray}

\noindent
By (\ref{E:5.7}) this leads to

\begin{eqnarray}    \label{E:5.16}
\lim_{\lambda \rightarrow 0} \frac{\rho_{1}(\lambda) \cdots \rho_{\ell_{0}}(0)}{\lambda^{\ell_{0}}} & = &
\ddet  \left( {\mathcal K}^{(2)} {\mathcal R}^{(2)} \overline{{\mathcal R}^{(2)T}} ~
 {\mathcal L}^{(2)} \left(\overline{{\mathcal B}^{T}} {\mathcal B} \right) ~
 {\mathcal M}^{(1)}  {\mathcal R}^{(1)} \overline{{\mathcal R}^{(1)T}}~ \overline{{\mathcal K}^{(1) T}} \right)  \\
 & = &  \ddet \left( {\mathcal R}^{(2)} \overline{{\mathcal R}^{(2)T}} ~
  \left(\overline{{\mathcal B}^{T}} {\mathcal B} \right) ~ {\mathcal R}^{(1)} \overline{{\mathcal R}^{(1)T}}~ \right)
 ~ = ~ \frac{\ddet {\mathcal C}}{\ddet {\mathcal S}^{(1)} ~ \ddet {\mathcal S}^{(2)}},    \nonumber
\end{eqnarray}

\noindent
where ${\mathcal C}$ is defined in (\ref{E:3.8}).
This together with (\ref{E:5.4}) and (\ref{E:5.5}) yields the following result.

\begin{theorem}   \label{Theorem:5.1}
Let $M$ be a compact Riemannian manifold with boundary $\partial M$, where $\partial M$ may be empty.
We choose a closed hypersurface ${\mathcal N}$ such that ${\mathcal N} \cap \partial M = \emptyset$ and $M - {\mathcal N}$ has at least two components.
We assume that $\Delta_{M}$ is a Laplacian satisfying (\ref{E:2.2}) and the Assumption A. Then,
\begin{eqnarray*}
& & \ln \Det^{\ast} \Delta_{M, B} ~ - ~  \ln \Det^{\ast} \Delta_{M_{1}, N, B} ~ - ~ \ln \Det^{\ast} \Delta_{M_{2}, N, B} ~   \\
& = & a_{0} ~ - ~ \ln \ddet {\mathcal C} ~ + ~ \ln \ddet {\mathcal S}^{(1)} ~ + ~ \ln \ddet {\mathcal S}^{(2)}
~ - ~ \ln \Det^{\ast} \left( Q_{1}(0) R_{\DN}(0)^{-1} Q_{2}(0) \right),
\end{eqnarray*}
where $a_{0}$ is the constant term in the asymptotic expansion of $\ln \Det \left( Q_{1}(\lambda) R_{\DN}(\lambda)^{-1} Q_{2}(\lambda) \right)$ for $|\lambda| \rightarrow \infty$.
If $\Dim M$ is even, then $a_{0} = 0$.
If $M$ has a product structure near ${\mathcal N}$ so that $\Delta_{M}$ is
$- \partial_{x_{m}}^{2} + \Delta_{{\mathcal N}}$ on a collar neighborhood of ${\mathcal N}$, then
\begin{eqnarray*}
a_{0} & = & - \ln 2 \cdot \left( \zeta_{\Delta_{{\mathcal N}}}(0) + \Dim \Ker \Delta_{{\mathcal N}}\right).
\end{eqnarray*}
\end{theorem}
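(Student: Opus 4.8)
The plan is to assemble the one-parameter identity (\ref{E:5.2}) with the two asymptotic analyses already prepared in this section, take the limit $\lambda \to 0$, and match finite parts. First I would specialize (\ref{E:5.2}) to the disconnected case: since $M_{0} = M_{1} \cup M_{2}$, one has $\ln \Det(\Delta_{M_{0}, N, N, B} + \lambda) = \ln \Det(\Delta_{M_{1}, N, B} + \lambda) + \ln \Det(\Delta_{M_{2}, N, B} + \lambda)$ exactly as in (\ref{3.1}), so the left-hand side of (\ref{E:5.2}) becomes the quantity whose $\lambda \to 0$ expansion is recorded in (\ref{E:5.4}). On the right-hand side, the polynomial $\sum_{j} a_{j} \lambda^{j}$ tends to $a_{0}$ as $\lambda \to 0$, while $\ln \Det R_{\Neu}(\lambda)$ is governed by (\ref{E:5.5}).

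The decisive step is the cancellation of the logarithmic singularity. By (\ref{E:5.4}) the left side carries a term $-\ell_{0} \ln \lambda$; on the right side, writing $\prod_{j=1}^{\ell_{0}} \rho_{j}(\lambda) = \lambda^{\ell_{0}} \cdot \bigl( \prod_{j} \rho_{j}(\lambda) / \lambda^{\ell_{0}} \bigr)$ in (\ref{E:5.5}) produces a matching $-\ell_{0} \ln \lambda$ together with a finite contribution $-\ln \lim_{\lambda \to 0} \bigl( \prod_{j} \rho_{j}(\lambda) / \lambda^{\ell_{0}} \bigr)$. The two singular terms cancel, which both confirms internal consistency and isolates the constant terms on each side.

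The main work lies in evaluating $\lim_{\lambda \to 0} \prod_{j} \rho_{j}(\lambda) / \lambda^{\ell_{0}}$, which is carried out in (\ref{E:5.8})--(\ref{E:5.16}). Here I would invoke Kato's theory of self-adjoint holomorphic families of type (A) to produce holomorphic eigenvalues and eigensections of $R_{\Neu}(\lambda)^{-1}$, of $R_{\DN}(\lambda)$, and of the $Q_{i}(\lambda)$; introduce the three unitary change-of-basis matrices ${\mathcal K}^{(i)}$, ${\mathcal L}^{(i)}$, ${\mathcal M}^{(i)}$ of (\ref{E:5.6}); and use the compatibility relations (\ref{E:5.7}) to force these unitary factors out of the determinant. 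Combined with the eigenvalue-derivative identity (\ref{E:5.14}) and the Dirichlet-to-Neumann relation (\ref{E:3.9}), this yields $\lim_{\lambda \to 0} \prod_{j} \rho_{j}(\lambda) / \lambda^{\ell_{0}} = \ddet {\mathcal C} / (\ddet {\mathcal S}^{(1)} \ddet {\mathcal S}^{(2)})$ as in (\ref{E:5.16}). I expect this eigenvalue-product computation --- tracking the three holomorphic families simultaneously and exploiting (\ref{E:5.7}) --- to be the principal obstacle; the self-adjointness of the $Q_{i}(0)$ and the relation $\ddet( {\mathcal R}^{(i)} \overline{{\mathcal R}^{(i) T}} ) = 1 / \ddet {\mathcal S}^{(i)}$ of (\ref{E:5.13}) are what keep it tractable.

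Finally I would collect terms. Using $\ln \Det^{\ast} R_{\Neu}(0) = - \ln \Det^{\ast}( R_{\Neu}(0)^{-1} ) = - \ln \Det^{\ast} \bigl( Q_{1}(0) R_{\DN}(0)^{-1} Q_{2}(0) \bigr)$ from Definition \ref{Definition:2.9} together with $\ln \bigl( \ddet {\mathcal C} / (\ddet {\mathcal S}^{(1)} \ddet {\mathcal S}^{(2)}) \bigr) = \ln \ddet {\mathcal C} - \ln \ddet {\mathcal S}^{(1)} - \ln \ddet {\mathcal S}^{(2)}$, the matched finite parts rearrange precisely into the claimed identity. The statement about $a_{0}$ under a product structure follows by specializing Theorem \ref{Theorem:2.16} to $S = 0$, i.e. all $\alpha_{i} = 0$, for which the only surviving term in even dimension is $a_{0} = - \ln 2 \cdot {\frak a}_{\frac{m-1}{2}}$, and then using the standard relation ${\frak a}_{\frac{m-1}{2}} = \zeta_{\Delta_{{\mathcal N}}}(0) + \Dim \Ker \Delta_{{\mathcal N}}$ between the heat coefficient in (\ref{E:2.51}) and the zeta value.
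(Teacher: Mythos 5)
Your proposal is correct and follows essentially the same route as the paper's own proof: it specializes (\ref{E:5.2}) to the disconnected case, cancels the $-\ell_{0}\ln\lambda$ singularities between (\ref{E:5.4}) and (\ref{E:5.5}), evaluates $\lim_{\lambda\rightarrow 0}\prod_{j}\rho_{j}(\lambda)/\lambda^{\ell_{0}}$ exactly as in the Kato holomorphic-family computation (\ref{E:5.8})--(\ref{E:5.16}) using (\ref{E:5.6}), (\ref{E:5.7}), (\ref{E:5.13}), (\ref{E:5.14}) and (\ref{E:3.9}), and obtains the product-structure value of $a_{0}$ by setting $\alpha_{i}=0$ in Theorem \ref{Theorem:2.16} together with ${\frak a}_{\frac{m-1}{2}} = \zeta_{\Delta_{{\mathcal N}}}(0) + \Dim \Ker \Delta_{{\mathcal N}}$. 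The sign bookkeeping, including $\ln \Det^{\ast} R_{\Neu}(0) = - \ln \Det^{\ast}\left( Q_{1}(0) R_{\DN}(0)^{-1} Q_{2}(0) \right)$, matches the paper's argument, so there is nothing to correct.
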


\noindent
If $\Delta_{M}$ is a scalar Laplacian on a closed manifold $M$, then it follows that $\ell_{0} = 1$ and

\begin{eqnarray}    \label{E:5.17}
{\mathcal C} ~ = ~ \frac{\vol({\mathcal N})}{\vol(M)}, \qquad
{\mathcal S}^{(1)} ~ = ~ \frac{\vol({\mathcal N})}{\vol(M_{1})}, \qquad  {\mathcal S}^{(2)} ~ = ~ \frac{\vol({\mathcal N})}{\vol(M_{2})},
\end{eqnarray}

\noindent
which leads to the following result.

\begin{corollary}   \label{Corollary:5.2}
We make the same assumptions as in Theorem \ref{Theorem:5.1}.
Let $\Delta_{M}$ be a Laplacian acting on smooth functions on a closed Riemannian manifold. Then,
\begin{eqnarray*}
& & \ln \Det^{\ast} \Delta_{M} ~ - ~  \ln \Det^{\ast} \Delta_{M_{1}, N} ~ - ~ \ln \Det^{\ast} \Delta_{M_{2}, N} ~   \\
& = & a_{0} ~ + ~ \ln \frac{\vol({\mathcal N}) \cdot \vol(M)}{\vol(M_{1}) \cdot \vol(M_{2})} ~ - ~ \ln \Det^{\ast} \left( Q_{1}(0) R_{\DN}(0)^{-1} Q_{2}(0) \right).
\end{eqnarray*}
\end{corollary}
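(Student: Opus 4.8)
The plan is to obtain Corollary \ref{Corollary:5.2} as an immediate specialization of Theorem \ref{Theorem:5.1}, so the whole task reduces to evaluating the three Gram matrices $\mathcal{C}$, $\mathcal{S}^{(1)}$, $\mathcal{S}^{(2)}$ in the scalar case and substituting them into the general formula. First I would record that, since $M$ is closed (and connected) and $\Delta_M$ is the scalar Laplacian, $\Ker \Delta_M$ is spanned by the constant functions, so that $\ell_0 = \Dim \Ker \Delta_{M} = 1$. By the Remark following Assumption A, the hypothesis of Theorem \ref{Theorem:5.1} holds automatically here because each $\Ker Q_i(0)$ likewise consists of constant functions; consequently $\mathcal{C}$, $\mathcal{S}^{(1)}$ and $\mathcal{S}^{(2)}$ are all $1 \times 1$ matrices, i.e. scalars.

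Next I would compute their single entries. An orthonormal basis of $\Ker \Delta_M$ is $\{\psi_1\}$ with $\psi_1 \equiv 1/\sqrt{\vol(M)}$, so by (\ref{E:3.8}) the restriction $\psi_1|_{\mathcal{N}}$ is the constant $1/\sqrt{\vol(M)}$ and hence
\begin{eqnarray*}
\mathcal{C} ~ = ~ \langle \psi_1|_{\mathcal{N}}, ~ \psi_1|_{\mathcal{N}} \rangle_{\mathcal{N}} ~ = ~ \frac{1}{\vol(M)} \int_{\mathcal{N}} dy ~ = ~ \frac{\vol(\mathcal{N})}{\vol(M)}.
\end{eqnarray*}
Likewise, for $i = 1, 2$ an orthonormal basis of $\Ker \Delta_{M_i, N}$ is the single constant $\psi_1^{(i)} \equiv 1/\sqrt{\vol(M_i)}$, so (\ref{E:5.12}) yields
\begin{eqnarray*}
\mathcal{S}^{(i)} ~ = ~ \langle \psi_1^{(i)}|_{\mathcal{N}}, ~ \psi_1^{(i)}|_{\mathcal{N}} \rangle_{\mathcal{N}} ~ = ~ \frac{\vol(\mathcal{N})}{\vol(M_i)}.
\end{eqnarray*}
These are exactly the identities recorded in (\ref{E:5.17}).

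Finally I would substitute these into Theorem \ref{Theorem:5.1}. Collecting the three logarithmic volume terms,
\begin{eqnarray*}
- \ln \ddet \mathcal{C} + \ln \ddet \mathcal{S}^{(1)} + \ln \ddet \mathcal{S}^{(2)} & = & \ln \frac{\vol(M)}{\vol(\mathcal{N})} + \ln \frac{\vol(\mathcal{N})}{\vol(M_1)} + \ln \frac{\vol(\mathcal{N})}{\vol(M_2)} \\
& = & \ln \frac{\vol(\mathcal{N}) \cdot \vol(M)}{\vol(M_1) \cdot \vol(M_2)},
\end{eqnarray*}
which, together with the terms $a_0$ and $- \ln \Det^{\ast}\left( Q_1(0) R_{\DN}(0)^{-1} Q_2(0) \right)$ carried over unchanged from Theorem \ref{Theorem:5.1}, produces precisely the asserted formula.

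I do not expect a genuine obstacle here, since all the analytic content already resides in Theorem \ref{Theorem:5.1}; the only point requiring care is the bookkeeping of the three Gram determinants and the sign of each logarithm. The one mild thing worth verifying is that the operator $Q_1(0) R_{\DN}(0)^{-1} Q_2(0)$ is well defined on the orthogonal complement of the kernels, but this is guaranteed by Assumption A, which, as noted in the Remark, holds automatically for the scalar Laplacian on a closed manifold.
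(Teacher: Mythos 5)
Your proposal is correct and matches the paper's own argument: the paper likewise notes that $\ell_{0}=1$ with $\Ker\Delta_{M}$ spanned by the constant $1/\sqrt{\vol(M)}$, records the scalar values ${\mathcal C}=\vol({\mathcal N})/\vol(M)$ and ${\mathcal S}^{(i)}=\vol({\mathcal N})/\vol(M_{i})$ in (\ref{E:5.17}), and substitutes them directly into Theorem \ref{Theorem:5.1}. Your bookkeeping of the three logarithmic terms reproduces exactly the paper's derivation.
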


\vspace{0.2 cm}
We next discuss an analogue of Corollary \ref{Corollary:3.3} when $\Dim M = 2$. Let $(M, g)$ be a $2$-dimensional closed Riemannian manifold and $\Delta_{M}$ be a scalar Laplacian. As before,
we consider a conformal change of a metric $g_{ij}(\epsilon) = e^{2\epsilon F} g_{ij}$, and we get
$\Delta_{M}(\epsilon) := e^{- 2 \epsilon F} \Delta_{M}$. Let $\ell({\mathcal N})$ be the length of ${\mathcal N}$.
Then, Corollary \ref{Corollary:5.2} is rewritten by

\begin{eqnarray}    \label{E:5.21}
& & \ln \frac{\Det^{\ast} Q_{1}(0) R_{\DN}(0)^{-1} Q_{2}(0)}{\ell({\mathcal N})} (\epsilon)  \\
& = & - \ln \Det^{\ast} \Delta_{M}(\epsilon) + \sum_{i=1}^{2} \ln \Det^{\ast} \Delta_{M_{i}, N}(\epsilon)
~ + ~ \ln \vol(M)(\epsilon) ~ - ~ \sum_{i=1}^{2} \ln \vol(M_{i})(\epsilon),    \nonumber
\end{eqnarray}

\noindent
where we used the fact that $a_{0}= 0$ when $\Dim M$ is even.
We denote

\begin{eqnarray}   \label{E:5.22}
\Tr \big( F e^{- t \Delta_{M}} \big) ~ \sim ~ \sum_{k=0}^{\infty} \beta_{k}(F) ~ t^{\frac{-2 + k}{2}}, \qquad
\Tr \big( F e^{- t \Delta_{M_{i}, N}} \big) ~ \sim ~ \sum_{k=0}^{\infty} \beta^{(i)}_{k}(F) ~ t^{\frac{-2 + k}{2}}.
\end{eqnarray}

\noindent
By Theorem 2.12 in \cite{BG2} and Theorem 3.3.1, Theorem 3.5.1 in \cite{Gi2} (or Chapter 4 in \cite{Ki}), it follows that

\begin{eqnarray}    \label{E:5.23}
& & \frac{d}{d\epsilon}\big|_{\epsilon=0} \ln \frac{\Det^{\ast} Q_{1}(0) R_{\DN}(0)^{-1} Q_{2}(0)}{\ell({\mathcal N})} (\epsilon) \\
& = & 2 \bigg\{ \beta_{2}(F) ~ - ~ \frac{ \int_{M} F dx }{\vol(M)} ~ - ~ \sum_{i=1}^{2} \bigg( \beta_{2}^{(i)}(F) ~ - ~ \frac{ \int_{M_{i}} F dx }{\vol(M_{i})} \bigg) \bigg\} ~ + ~ \frac{2 \int_{M} F dx}{\vol(M)} ~ - ~ \sum_{i=1}^{2}\frac{2 \int_{M_{i}} F dx}{\vol(M_{i})}    \nonumber \\
& = & \beta_{2}(F) - \beta^{(1)}_{2}(F) - \beta_{2}^{(2)}(F) ~ = ~  0,    \nonumber
\end{eqnarray}

\noindent
which leads to the following result. This is an analogue of Theorem 1.1 in \cite{GG} and \cite{EW}.

\begin{corollary}
Let $(M, g)$ be a $2$-dimensional closed Riemannian manifold and $\Delta_{M}$ be a scalar Laplacian.
We choose a closed hypersurface ${\mathcal N}$ such that the closure of $M - {\mathcal N}$ is a union of $M_{1}$ and $M_{2}$. Then, $\frac{\Det^{\ast} Q_{1}(0) R_{\DN}(0)^{-1} Q_{2}(0)}{\ell({\mathcal N})}$ is a conformal invariant.
\end{corollary}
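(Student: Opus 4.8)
The plan is to read off the conformal invariance directly from Corollary~\ref{Corollary:5.2} and then show that its right-hand side does not move under a conformal change of metric. First I would note that $\Dim M = 2$ is even, so the constant $a_{0}$ in Corollary~\ref{Corollary:5.2} vanishes; the corollary then reduces, modulo $2\pi i$, to an expression of $\ln\bigl[\Det^{\ast}\bigl(Q_{1}(0)R_{\DN}(0)^{-1}Q_{2}(0)\bigr)/\ell({\mathcal N})\bigr]$ entirely in terms of the zeta-determinants of the scalar Laplacians $\Delta_{M}$, $\Delta_{M_{1},N}$, $\Delta_{M_{2},N}$ and the volumes $\vol(M)$, $\vol(M_{1})$, $\vol(M_{2})$, precisely as recorded in (\ref{E:5.21}). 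Consequently the quantity in question is a conformal invariant if and only if the right-hand side of (\ref{E:5.21}) is independent of $\epsilon$ along $g_{ij}(\epsilon)=e^{2\epsilon F}g_{ij}$.

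Next I would differentiate that right-hand side at $\epsilon=0$. For each of the three Laplacians I would invoke the standard conformal variation formula for the modified zeta-determinant of a scalar Laplacian in dimension two (Theorem~2.12 in \cite{BG2}, Theorems~3.3.1 and~3.5.1 in \cite{Gi2}, or Chapter~4 of \cite{Ki}), which writes $\tfrac{d}{d\epsilon}\big|_{0}\ln\Det^{\ast}\Delta(\epsilon)$ as $-2\bigl(\beta_{2}(F)-\vol^{-1}\!\int F\,dx\bigr)$, the zero-mode correction $\vol^{-1}\!\int F\,dx$ arising from the constant functions in the kernel. At the same time, in dimension two $dV_{\epsilon}=e^{2\epsilon F}dV$ gives $\tfrac{d}{d\epsilon}\big|_{0}\ln\vol=2\,\vol^{-1}\!\int F\,dx$. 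Assembling these ingredients exactly as in (\ref{E:5.23}), the zero-mode contributions coming from the three determinant variations cancel in pairs against the three volume derivatives, and the whole $\epsilon$-derivative collapses to a fixed multiple of $\beta_{2}(F)-\beta_{2}^{(1)}(F)-\beta_{2}^{(2)}(F)$, where $\beta_{2}$, $\beta_{2}^{(i)}$ are the smeared heat coefficients of (\ref{E:5.22}).

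It then remains to establish the additivity identity $\beta_{2}(F)=\beta_{2}^{(1)}(F)+\beta_{2}^{(2)}(F)$, which I expect to be the crux. Here I would use that the $t^{0}$ heat coefficient is the integral of a local density. The interior densities for the closed manifold $M$ and for $M_{1}\sqcup M_{2}$ agree pointwise, so their interior integrals add to $\beta_{2}(F)$ via $\int_{M}=\int_{M_{1}}+\int_{M_{2}}$. The only possible discrepancy is the boundary density along ${\mathcal N}$, which contributes to both $\beta_{2}^{(1)}(F)$ and $\beta_{2}^{(2)}(F)$ and, for the Neumann condition, is built from the trace of the second fundamental form of ${\mathcal N}$ and the outward normal derivative of $F$. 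The decisive check is that these boundary quantities are odd under the reversal of the normal, which is $\partial_{{\mathcal N}}$ for $M_{1}$ but $-\partial_{{\mathcal N}}$ for $M_{2}$; hence the ${\mathcal N}$-contributions to $\beta_{2}^{(1)}(F)$ and $\beta_{2}^{(2)}(F)$ are equal and opposite and cancel, giving $\beta_{2}(F)=\beta_{2}^{(1)}(F)+\beta_{2}^{(2)}(F)$. This is the Neumann analogue of the cancellation exploited in (\ref{E:3.15}), and once it is in place the $\epsilon$-derivative in (\ref{E:5.23}) vanishes, so $\Det^{\ast}\bigl(Q_{1}(0)R_{\DN}(0)^{-1}Q_{2}(0)\bigr)/\ell({\mathcal N})$ is a conformal invariant.
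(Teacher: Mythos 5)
Your proposal is correct and follows essentially the same route as the paper: reduce via Corollary \ref{Corollary:5.2} (with $a_{0}=0$ in even dimension) to the identity (\ref{E:5.21}), differentiate at $\epsilon=0$ so that the zero-mode corrections cancel against the volume derivatives as in (\ref{E:5.23}), and conclude from $\beta_{2}(F)-\beta_{2}^{(1)}(F)-\beta_{2}^{(2)}(F)=0$. Your explicit justification of this last identity via the oddness of the Neumann boundary densities under reversal of the normal is exactly the mechanism the paper invokes (stated for the Robin case at (\ref{E:3.15})), merely spelled out in more detail.
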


\vspace{0.2 cm}

Finally, we give a simple example of showing Theorem \ref{Theorem:5.1}.

\vspace{0.2 cm}
\noindent
{\it Example 5.3} : Let $M = [0, ~ L] \times Y$ and ${\mathcal N} = \{ a \} \times Y$,
where $0 < a < L$. We put $M_{1} = [0, ~ a] \times Y$ and $M_{2} = [a, ~ L] \times Y$.
We denote by $\Delta_{M, N, N}$ and $\Delta_{M_{i}, N, N}$ the Laplacian $- \frac{\partial^{2}}{\partial u^{2}} + \Delta_{Y}$ with the Neumann boundary condition on $\{ 0, L\} \times Y$ and $\{ a \} \times Y$ as before.
It follows from (\ref{E:4.24}) that

\begin{eqnarray}    \label{E:5.18}
& & \ln \Det^{\ast} \Delta_{M, N, N} - \ln \Det^{\ast} \Delta_{M_{1}, N, N} - \ln \Det^{\ast} \Delta_{M_{2}, N, N} \\
 & = &
- q_{0} \ln 2 ~ + ~ q_{0} \ln \frac{L}{a(L - a)} ~ - ~ \frac{1}{2} \ln \Det^{\ast} \Delta_{Y}
~ + ~ \sum_{\mu_{j} > 0} \ln \frac{1 - e^{-2L\sqrt{\mu_{j}}}}{(1 - e^{-2a\sqrt{\mu_{j}}})(1 - e^{-2(L- a)\sqrt{\mu_{j}}})},  \nonumber
\end{eqnarray}

\noindent
where $q_{0} = \Dim \Ker \Delta_{Y}$.
We note that $Q_{1}(0)$ and $Q_{2}(0)$ defined on $\{ a \} \times Y$ satisfy the Assumption A with
$\Ker Q_{i}(0) = \Ker \Delta_{Y}$.
It follows from (\ref{E:4.37}) that

\begin{eqnarray}   \label{E:5.19}
& & - \ln \Det^{\ast} \left( Q_{1}(0) R_{\DN}(0)^{-1} Q_{2}(0) \right) ~ = ~ \ln \Det^{\ast} R_{\Neu}(0)  \\
& = & \ln 2 \cdot \zeta_{{\Delta_{Y}}}(0) -
\frac{1}{2} \ln \Det^{\ast} {\Delta_{Y}}
~ + ~ \sum_{0 < \mu_{j} \in \Spec( \Delta_{Y})}
\ln \frac{1 - e^{-2L \sqrt{\mu_{j}}}}{(1 - e^{-2a\sqrt{\mu_{j}}})(1 - e^{-2(L- a)\sqrt{\mu_{j}}})}.   \nonumber
\end{eqnarray}

\noindent
Let $\{ w_{1}, \cdots, w_{q_{0}} \}$ be an orthonormal basis of $\Ker \Delta_{Y}$. Then,
$\{ \frac{1}{\sqrt{L}} w_{1}, \cdots, \frac{1}{\sqrt{L}} w_{q_{0}} \}$ is an orthonormal basis of $\Ker \Delta_{M, N, N}$.
Hence, ${\mathcal C} = \frac{1}{L} \Id$ and $\ddet {\mathcal C}  =  \left( \frac{1}{L} \right)^{q_{0}}$. Similarly, we get $\ddet {\mathcal S}^{(1)} ~ = ~ \left( \frac{1}{a} \right)^{q_{0}}$ and
$\ddet {\mathcal S}^{(2)} ~ = ~ \left( \frac{1}{L-a} \right)^{q_{0}}$.
Since $a_{0} = - \ln 2 \cdot \left( \zeta_{\Delta_{Y}}(0) + q_{0} \right)$, it follows that

\begin{eqnarray}   \label{E:5.20}
a_{0}  -  \ln \ddet {\mathcal C}  +  \ln \ddet {\mathcal S}^{(1)}  +  \ln \ddet {\mathcal S}^{(2)} & = &
 - \ln 2 \cdot \left( \zeta_{\Delta_{Y}}(0) + q_{0} \right)
  +  q_{0} \ln \frac{L}{a(L-a)}.
\end{eqnarray}

\noindent
This agrees with Theorem \ref{Theorem:5.1}.

\vspace{1.0 cm}


\end{document}